\renewcommand{\vec}[1]{#1}
\title{Transformation of Fractions into \\ Simple Fractions in Divisive
       Meadows}
\author{J.A. Bergstra \and C.A. Middelburg}
\institute{Informatics Institute, Faculty of Science, University of
           Amsterdam, \\
           Science Park~904, 1098~XH Amsterdam, the Netherlands \\
           \email{J.A.Bergstra@uva.nl,C.A.Middelburg@uva.nl}}
\begin{document}
\maketitle

\begin{abstract}
Meadows are alternatives for fields with a purely equational 
axiomatization.
At the basis of meadows lies the decision to make the multiplicative 
inverse operation total by imposing that the multiplicative inverse of
zero is zero.
Divisive meadows are meadows with the multiplicative inverse operation
replaced by a division operation.
Viewing a fraction as a term over the signature of divisive meadows that
is of the form $p \mdiv q$, we investigate which divisive meadows admit 
transformation of fractions into simple fractions, i.e.\ fractions 
without proper subterms that are fractions.
\begin{keywords} 
divisive meadow, simple fraction, polynomial, rational number
\end{keywords}%
\begin{classcode}
12E12, 12L12, 68Q65
\end{classcode}
\end{abstract}

\section{Introduction}
\label{sect-intro}

To our knowledge, all existing definitions of a fraction are 
insufficiently precise to allow the validity of many non-trivial 
statements about fractions to be established.
The work presented in this paper is concerned with the rigorous 
definition of a fraction and the validity of statements related to the
question whether each fraction can be transformed into a simple fraction 
(colloquially described as a fraction where neither the numerator nor 
the denominator contains a fraction).
This work is carried out in the setting of divisive meadows.

Because fields do not have a purely equational axiomatization, the 
axioms of a field cannot be used in applications of the theory of 
abstract data types to number systems based on rational, real or complex 
numbers.
In~\cite{BT07a}, meadows are proposed as alternatives for fields with a
purely equational axiomatization.
At the basis of meadows lies the decision to make the multiplicative 
inverse operation total by imposing that the multiplicative inverse of 
zero is zero. 
A meadow is a commutative ring with a multiplicative identity element
and a total multiplicative inverse operation satisfying the two 
equations $(x\minv)\minv = x$ and $x \mmul (x \mmul x\minv) = x$.
It follows from the axioms of a meadow that the multiplicative inverse 
operation also satisfies the equation $0\minv = 0$.
All fields in which the multiplicative inverse of zero is zero, called
zero-totalized fields, are meadows, but not conversely.
Because of their purely equational axiomatization, all meadows are total 
algebras and the class of all meadows is a variety.

In~\cite{BM09g}, divisive meadows are proposed.
A divisive meadow is a commutative ring with a multiplicative identity
element and a total division operation satisfying the three equations 
$1 \mdiv (1 \mdiv x) = x$, $(x \mmul x) \mdiv x = x$, and 
$x \mdiv y = x \mmul (1 \mdiv y)$.
It follows from the axioms of a divisive meadow that the division 
operation also satisfies the equation $x \mdiv 0 = 0$.
We coined the alternative name inversive meadow for a meadow.
The equational axiomatizations of inversive meadows and divisive meadows
are essentially the same in the sense that they are definitionally 
equivalent.

We expect the zero-totalized multiplicative inverse and division 
operations of inversive and divisive meadows, which are conceptually and 
technically simpler than the conventional partial multiplicative inverse 
and division operations, to be useful in among other things mathematics 
education.
We further believe that viewing fractions as terms over the signature of 
divisive meadows whose outermost operator is the division operator 
gives a rigorous definition of a fraction that can serve as a basis of a
workable theory about fractions for teaching purposes at all levels of 
education (cf.~\cite{Ber15a}).
Divisive meadows are more convenient than inversive meadows for the
definition of fractions because, unlike the signature of inversive 
meadows, the signature of divisive meadows includes the division 
operator.

Viewing fractions as described above has two salient consequences: 
(i)~fractions may contain variables and 
(ii)~fractions may be interpreted in different divisive meadows. 
These consequences lead to the need to make a distinction between  
fractions and closed fractions and to consider properties of fractions 
relative to a particular divisive meadow.
Viewing fractions as described above, many properties of fractions 
considered in the past turn out to be properties of closed fractions 
and/or properties of fractions relative to the divisive meadow of 
rational numbers.

For example, it is known from earlier work on meadows that closed 
fractions can be transformed into simple fractions, i.e.\ fractions 
without proper subterms that are fractions, if fractions are interpreted 
in the divisive meadow of rational numbers.
Now the question arises whether the restriction to closed fractions can 
be dropped and whether this result goes through if fractions are 
interpreted in divisive meadows different from the divisive meadow of 
rational numbers.

In this paper, we investigate which divisive meadows admit 
transformation into simple fractions (for both the general case and the 
case of closed fractions).
Some exemplary results are:
(i)~every model of the axioms of divisive meadow with a finite carrier 
admits transformation into simple fractions;
(ii)~every minimal model of the axioms of a divisive meadow with an 
infinite carrier does not admit transformation into simple fractions;
(iii)~the divisive meadow of rational numbers is the only minimal model 
of the axioms of a divisive meadow with an infinite carrier that admits
transformation into simple fractions for closed fractions.

This paper is organized as follows.
First, we give a survey of inversive meadows and divisive meadows which
includes the signature and axioms for them, general results about them, 
and terminology used in the setting of meadows (Section~\ref{sect-Md}).
Next, we give the definitions concerning fractions and polynomials on 
which subsequent sections are based 
(Section~\ref{sect-basic-defs-fractions})
and establish some auxiliary results concerning divisive meadows which
will be used in subsequent sections 
(Section~\ref{sect-auxiliary-results}).
Then, we establish results about the transformation into simple 
fractions 
(Sections~\ref{sect-fractions} and~\ref{sect-closed-fractions}).
Following this, we establish results that are related to the results in 
the preceding sections, but do not concern fractions 
(Section~\ref{sect-miscellaneous}).
Finally, we make some concluding remarks (Section~\ref{sect-concl}).

We conclude this introduction with a corrective note on a remark made 
in~\cite{BM09g} and later papers on meadows.
Skew meadows, which differ from meadows only in that their 
multiplication is not required to be commutative, were already studied  
in~\cite{Kom75a,Kom76a}, where they go by the name of desirable 
pseudo-fields.
In 2009, we first read about desirable pseudo-fields in~\cite{Ono83a} 
and reported on it in~\cite{BM09g}.
However, we thought incorrectly at the time that desirable pseudo-fields
were meadows.
Hence, we accidentally mentioned that meadows were already introduced 
in~\cite{Kom75a}. 

\section{Inversive Meadows and Divisive Meadows}
\label{sect-Md}

In this section, we survey both inversive meadows and divisive meadows.
Inversive meadows have been proposed as alternatives for fields with a 
purely equational axiomatization in~\cite{BT07a}.
Inversive meadows have been further investigated in 
e.g.~\cite{BHT09a,BR08a,BBP13a,BM14b,BRS09a} and applied in 
e.g.~\cite{BPZ07a,BB09b,BM09d}.
Divisive meadows, which are inversive meadows with the multiplicative 
inverse operation replaced by a division operation, have been proposed 
in~\cite{BM09g}.%
\footnote
{An overview of all the work on meadows done to date and some open 
 questions originating from that work can be found on~\cite{siteMd}.}
In subsequent sections, only divisive meadows are needed.
However, established results about inverse meadows will be used in 
proofs wherever it is justified by the definitional equivalence of the
axiomatizations of inversive meadows and divisive meadows (see also the 
remarks following Theorem~\ref{theorem-defeqv-iMd-dMd} below).

An inversive meadow is a commutative ring with a multiplicative identity
element and a total multiplicative inverse operation satisfying two
equations which imply that the multiplicative inverse of zero is zero.
A divisive meadow is a commutative ring with a multiplicative identity
element and a total division operation satisfying three equations which
imply that division by zero always yields zero.
Hence, the signature of both inversive and divisive meadows include the
signature of a commutative ring with a multiplicative identity element.

The signature of commutative rings with a multiplicative identity
element consists of the following constants and operators:
\begin{itemize}
\item
the \emph{additive identity} constant $0$;
\item
the \emph{multiplicative identity} constant $1$; 
\pagebreak[2]
\item
the binary \emph{addition} operator ${} + {}$; 
\item
the binary \emph{multiplication} operator ${} \mmul {}$;
\item
the unary \emph{additive inverse} operator $- {}$;
\end{itemize}
The signature of inversive meadows consists of the constants and
operators from the signature of commutative rings with a multiplicative
identity element and in addition:
\begin{itemize}
\item
the unary \emph{multiplicative inverse} operator ${}\minv$.
\end{itemize}
The signature of divisive meadows consists of the constants and
operators from the signature of commutative rings with a multiplicative
identity element and in addition:
\begin{itemize}
\item
the binary \emph{division} operator ${} \mdiv {}$.
\end{itemize}
We write:
\begin{ldispl}
\begin{array}{@{}l@{\;}c@{\;}l@{}}
\sigcr    & \mathrm{for} & \set{0,1,{} + {},{} \mmul {}, - {}}\;,
\\
\sigimd   & \mathrm{for} & \sigcr \union \set{{}\minv}\;,
\\
\sigdmd   & \mathrm{for} & \sigcr \union \set{{} \mdiv {}}\;.
\end{array}
\end{ldispl}

We assume that there are infinitely many variables, including $x$, $y$
and $z$.
Terms are build as usual.
We use infix notation for the binary operators, prefix notation for the
unary operator $- {}$, and postfix notation for the unary 
operator~${}\minv$.
We use the usual precedence convention to reduce the need for
parentheses.
Subtraction is introduced as an abbreviation as follows: $p - q$ 
abbreviates $p + (-q)$. 
We use the notation $p^n$ for exponentiation with natural number 
exponents.
For each term $p$ over the signature of inversive meadows or the 
signature of divisive meadows and each natural number $n$, the term 
$p^n$ is defined by induction on $n$ as follows: $p^0 = 1$ and
$p^{n+1} = p^n \mmul p$.
We use the notation $\ul{n}$ for the numeral of $n$.
For each natural number $n$, the term $\ul{n}$ is defined by induction 
on $n$ as follows: $\ul{0} = 0$ and $\ul{n+1} = \ul{n} + 1$.
For convenience, we extend the notation $\ul{n}$ from natural numbers to 
integers by stipulating that $\ul{-n} = - \ul{n}$.

The constants and operators from the signatures of inversive meadows and
divisive meadows are adopted from rational arithmetic, which gives an
appropriate intuition about these constants and operators.

A commutative ring with a multiplicative identity element is a total 
algebra over the signature $\sigcr$ that satisfies the equations given 
in Table~\ref{eqns-commutative-ring}.%
\begin{table}[!t]
\caption
{Axioms of a commutative ring with a multiplicative identity element}
\label{eqns-commutative-ring}
\begin{eqntbl}
\begin{eqncol}
(x + y) + z = x + (y + z)                                             \\
x + y = y + x                                                         \\
x + 0 = x                                                             \\
x + (-x) = 0
\end{eqncol}
\qquad\quad
\begin{eqncol}
(x \mmul y) \mmul z = x \mmul (y \mmul z)                             \\
x \mmul y = y \mmul x                                                 \\
x \mmul 1 = x                                                         \\
x \mmul (y + z) = x \mmul y + x \mmul z
\end{eqncol}
\end{eqntbl}
\end{table}
An \emph{inversive meadow} is a total algebra over the signature 
$\sigimd$ that satisfies the equations given in 
Tables~\ref{eqns-commutative-ring}
and~\ref{eqns-add-inversive-meadow}.%
\begin{table}[!t]
\caption{Additional axioms for an inversive meadow}
\label{eqns-add-inversive-meadow}
\begin{eqntbl}
\begin{eqncol}
{} \\[-3ex]
(x\minv)\minv = x                                       \\
x \mmul (x \mmul x\minv) = x                           
\end{eqncol}
\end{eqntbl}
\end{table}
A \emph{divisive meadow} is a total algebra over the signature $\sigdmd$ 
that satisfies the equations given in Tables~\ref{eqns-commutative-ring}
and~\ref{eqns-add-divisive-meadow}.%
\begin{table}[!t]
\caption{Additional axioms for a divisive meadow}
\label{eqns-add-divisive-meadow}
\begin{eqntbl}
\begin{eqncol}
1 \mdiv (1 \mdiv x) = x                                               \\
(x \mmul x) \mdiv x = x                                               \\
x \mdiv y = x \mmul (1 \mdiv y)
\end{eqncol}
\end{eqntbl}
\end{table}

We write:
\begin{ldispl}
\begin{array}{@{}l@{\;}c@{\;}l@{}}
\eqnscr  &
\multicolumn{2}{@{}l@{}}
 {\mathrm{for\; the\; set\; of\; all\; equations\; in\; Table\;
          \ref{eqns-commutative-ring}}\;,}
\\
\eqnsinv  &
\multicolumn{2}{@{}l@{}}
 {\mathrm{for\; the\; set\; of\; all\; equations\; in\; Table\;
          \ref{eqns-add-inversive-meadow}}\;,}
\\
\eqnsdiv  &
\multicolumn{2}{@{}l@{}}
 {\mathrm{for\; the\; set\; of\; all\; equations\; in\; Table\;
          \ref{eqns-add-divisive-meadow}}\;,}
\\
\eqnsimd & \mathrm{for} & \eqnscr \union \eqnsinv\;,
\\
\eqnsdmd & \mathrm{for} & \eqnscr \union \eqnsdiv\;.
\end{array}
\end{ldispl}

Equations making the nature of the multiplicative inverse operation
in inversive meadows more clear are derivable from the equations 
$\eqnsimd$.
\begin{proposition}
\label{prop-iMd-derivable}
The equations 
\begin{ldispl}
0\minv = 0\;, \quad
1\minv = 1\;, \quad
(- x)\minv = - (x\minv)\;, \quad
(x \mmul y)\minv = x\minv \mmul y\minv 
\end{ldispl}%
are derivable from the equations $\eqnsimd$.
\end{proposition}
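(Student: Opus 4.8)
The plan is to derive the four equations in turn from $\eqnsimd$, exploiting the two meadow-specific axioms $(x\minv)\minv = x$ and $x \mmul (x \mmul x\minv) = x$ together with the commutative-ring axioms, and reusing each derived equation as a lemma for the next.

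First I would establish $0\minv = 0$. From the ring axioms one has $0 \mmul y = 0$ for all $y$ (a standard consequence of distributivity and $x + (-x) = 0$, which I would quote without detailed calculation). Instantiating the second meadow axiom with $x := 0$ gives $0 \mmul (0 \mmul 0\minv) = 0$, which is automatically true and yields nothing directly; instead I would instantiate the first axiom with $x := 0$ to get $(0\minv)\minv = 0$, then apply $0 \mmul y = 0$ with $y := 0\minv$ to get $0 \mmul 0\minv = 0$, and finally instantiate the second axiom with $x := 0\minv$ to obtain $0\minv \mmul (0\minv \mmul (0\minv)\minv) = 0\minv$; substituting $(0\minv)\minv = 0$ and $0\minv \mmul 0 = 0$ collapses the left-hand side to $0$, giving $0\minv = 0$.

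Next, for $1\minv = 1$: instantiate the second meadow axiom with $x := 1$ to get $1 \mmul (1 \mmul 1\minv) = 1$, and simplify the left-hand side using $x \mmul 1 = x$ (applied twice, after commutativity) to conclude $1\minv = 1$. For $(-x)\minv = -(x\minv)$: I would first derive the auxiliary identity $(-x)\mmul(-y) = x \mmul y$ and $(-x)\mmul y = -(x \mmul y)$ from the ring axioms, then instantiate the first meadow axiom to observe $((-x)\minv)\minv = -x$ and use the second meadow axiom at $-x$ together with these sign rules to show that $-(x\minv)$ satisfies the same defining relations as $(-x)\minv$; the cleanest route is to multiply out $x \mmul (x \mmul (-(x\minv)))$-style expressions and match them against the axioms. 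For the last equation $(x\mmul y)\minv = x\minv \mmul y\minv$, I would show $x\minv \mmul y\minv$ behaves as the inverse of $x \mmul y$ by verifying the two meadow axioms for it: using commutativity and associativity of multiplication, $(x \mmul y)\mmul\bigl((x \mmul y)\mmul(x\minv \mmul y\minv)\bigr)$ rearranges to $\bigl(x\mmul(x\mmul x\minv)\bigr)\mmul\bigl(y\mmul(y\mmul y\minv)\bigr) = x \mmul y$, and a similar rearrangement handles the involution axiom; then I would appeal to the fact (derivable in meadows, and provable here from the axioms by the same kind of manipulation) that an element satisfying these relations relative to $x \mmul y$ equals $(x \mmul y)\minv$.

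The main obstacle I expect is the last step: unlike in a field, the multiplicative inverse in a meadow is not uniquely characterized by a single equation such as $z \mmul z = 1$ valid only for nonzero $z$, so I cannot simply say "both sides are the inverse of $x \mmul y$, hence equal." What is needed is an \emph{equational} uniqueness argument — showing purely from $\eqnsimd$ that $\eqnsimd \vdash u = v$ whenever $u$ and $v$ provably satisfy $u \mmul (u \mmul w) = w$-type relations with $w := x \mmul y$ — or, more practically, a direct manipulation that rewrites $(x \mmul y)\minv$ into $x\minv \mmul y\minv$ without passing through such a uniqueness principle. I would handle this by first proving the general lemma that $\eqnsimd \vdash x\minv = x\minv \mmul (x \mmul x\minv)$ and $\eqnsimd \vdash x \mmul x\minv = x \mmul x\minv \mmul (\cdots)$, i.e.\ isolating a normal form $x\minv = x \mmul (x\minv)^2$ for inverses, and then verifying that substituting $x \mmul y$ into this normal form and expanding via commutativity, associativity, and the already-derived cases reproduces $x\minv \mmul y\minv$; the sign equation is then a special case obtained by combining the $(-1)$ instance with multiplicativity. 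The other three equations are routine once $0 \mmul y = 0$ and the elementary sign rules are in hand.
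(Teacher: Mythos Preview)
The paper's own proof is nothing more than a pointer to the literature: it cites Theorem~2.2 of~\cite{BT07a} for $0\minv=0$, declares $1\minv=1$ trivial, and cites Proposition~2.8 of~\cite{BHT09a} for the last two identities. Your from-scratch derivation is therefore necessarily a different route, and for the first two equations it is correct and complete; the reduction of $(-x)\minv=-(x\minv)$ to the $(-1)$ instance of multiplicativity (via $(-1)\minv=-1$, which you can get from $1\mmul(1\mmul 1\minv)=1$ applied to $-1$) is also fine once multiplicativity is available.

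The gap is in the multiplicativity step itself. Your first strategy --- show that $x\minv\mmul y\minv$ ``satisfies the two meadow axioms relative to $x\mmul y$'' and then invoke uniqueness --- does not make sense as stated: the meadow axioms constrain the \emph{operation} $(\cdot)\minv$, not a candidate element, so there is no property ``is an inverse of $x\mmul y$'' whose uniqueness could be proved equationally. Your fallback via the normal form $z\minv=z\mmul(z\minv)^2$ is a valid identity, and indeed a one-line computation gives $x\minv\mmul y\minv=(x\mmul y)\mmul(x\minv\mmul y\minv)^2$; but this merely shows that $x\minv\mmul y\minv$ satisfies the \emph{same} relation as $(x\mmul y)\minv$, which throws you straight back onto the uniqueness problem you just set aside. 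Substituting $x\mmul y$ into the normal form in the other direction gives $(x\mmul y)\minv=(x\mmul y)\mmul((x\mmul y)\minv)^2$, and you cannot expand the right-hand side without already knowing what $(x\mmul y)\minv$ is.

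What is actually needed is a direct equational chain from $(x\mmul y)\minv$ to $x\minv\mmul y\minv$. The published argument in~\cite{BHT09a} builds this chain through several auxiliary identities --- idempotence of $x\mmul x\minv$, the interaction $(x\mmul y)\mmul(x\mmul y)\minv = (x\mmul x\minv)\mmul(y\mmul y\minv)$, and the like --- none of which your plan supplies. Until you produce such a chain (or an equational uniqueness lemma strong enough to replace it), the last two equations remain unproved in your outline.
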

\begin{proof}
Theorem~2.2 from~\cite{BT07a} is concerned with the derivability of the 
first equation and Proposition~2.8 from~\cite{BHT09a} is concerned with 
the derivability of the last two equations.
The derivability of the second equation is trivial.
\qed
\end{proof}

The advantage of working with a total multiplicative inverse operation
or a total division operation lies in the fact that conditions such as
$x \neq 0$ in $x \neq 0 \Implies x \mmul x\minv = 1$ are not needed to 
guarantee meaning.

An inversive or divisive meadow is \emph{non-trivial} if it satisfies the 
\emph{separation axiom} 
\begin{ldispl}
0 \neq 1\;;
\end{ldispl}%
and it is an inversive or divisive \emph{cancellation meadow} if it 
satisfies the \emph{cancellation axiom} 
\begin{ldispl}
x \neq 0 \And x \mmul y = x \mmul z \Implies y = z\;.
\end{ldispl}%
In the case of a inversive meadow, the cancellation axiom is equivalent 
to the \emph{general inverse law} 
\begin{ldispl}
x \neq 0 \Implies x \mmul x\minv = 1\;.
\end{ldispl}

A \emph{totalized field} is a total algebra over the signature $\sigimd$ 
that satisfies the equations $\eqnscr$, the separation axiom, and the 
general inverse law.
A \emph{zero-totalized field} is a totalized field that satisfies in 
addition the equation $0\minv = 0$.
\begin{theorem}
\label{theorem-cMd-F0}
The class of all non-trivial inversive cancellation meadows and the 
class of all zero-totalized fields are the same.
\end{theorem}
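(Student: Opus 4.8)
The plan is to prove the two inclusions between the class of non-trivial inversive cancellation meadows and the class of zero-totalized fields, working throughout over the signature $\sigimd$.

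First I would show that every zero-totalized field is a non-trivial inversive cancellation meadow. Let $\alg{M}$ be a zero-totalized field. By definition it satisfies $\eqnscr$, the separation axiom, the general inverse law, and $0\minv = 0$; so non-triviality is immediate, and it remains to check that $\alg{M}$ satisfies the two additional inversive-meadow axioms from Table~\ref{eqns-add-inversive-meadow} and the cancellation axiom. For $(x\minv)\minv = x$: if $x = 0$ use $0\minv = 0$ twice; if $x \neq 0$, then $x \mmul x\minv = 1$ by the general inverse law, so $x\minv \neq 0$ (otherwise $1 = 0$, contradicting separation), hence $x\minv \mmul (x\minv)\minv = 1$, and multiplying $x \mmul x\minv = 1$ by $(x\minv)\minv$ and using commutativity and associativity gives $x = (x\minv)\minv$. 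For $x \mmul (x \mmul x\minv) = x$: if $x = 0$ both sides are $0$; if $x \neq 0$, then $x \mmul x\minv = 1$ so $x \mmul (x \mmul x\minv) = x \mmul 1 = x$. Finally, cancellation: here I would use that in a totalized field the general inverse law holds, and the remark in the excerpt already records that for inversive meadows the cancellation axiom is \emph{equivalent} to the general inverse law; so once we know $\alg{M}$ is an inversive meadow satisfying the general inverse law, cancellation follows. (Concretely: from $x \neq 0$ and $x \mmul y = x \mmul z$, multiply both sides by $x\minv$ and use $x\minv \mmul x = 1$.)

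For the converse, let $\alg{M}$ be a non-trivial inversive cancellation meadow. It satisfies $\eqnscr$ and the separation axiom by assumption. By Proposition~\ref{prop-iMd-derivable} the equation $0\minv = 0$ is derivable from $\eqnsimd$, hence holds in $\alg{M}$. By the cited equivalence (cancellation axiom $\Leftrightarrow$ general inverse law in the presence of the inversive-meadow axioms), $\alg{M}$ satisfies the general inverse law. Therefore $\alg{M}$ satisfies $\eqnscr$, the separation axiom, the general inverse law, and $0\minv = 0$, which is exactly the definition of a zero-totalized field.

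The main obstacle is not any deep argument but rather bookkeeping: making precise and self-contained the claim that "the cancellation axiom is equivalent to the general inverse law" for inversive meadows, since both inclusions lean on it. The forward direction (cancellation $\Rightarrow$ general inverse law) needs a witness for $x \mmul x\minv = 1$ when $x \neq 0$: from the axiom $x \mmul (x \mmul x\minv) = x = x \mmul 1$ and $x \neq 0$, cancellation yields $x \mmul x\minv = 1$. The reverse direction is the short computation above. Since the excerpt states this equivalence as already known, I would simply cite it, and the proof reduces to the routine verifications indicated, finishing with \qed.
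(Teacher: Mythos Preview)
Your proof is correct. The paper's own proof consists of a single sentence citing Lemma~2.5 from~\cite{BT07a} as a corollary, so your direct verification of the two inclusions---checking the inversive-meadow axioms for a zero-totalized field by case distinction on $x = 0$ versus $x \neq 0$, and invoking Proposition~\ref{prop-iMd-derivable} together with the stated equivalence of cancellation and the general inverse law for the converse---is more self-contained than what the paper itself provides, though it is presumably just unpacking what that external lemma does.
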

\begin{proof}
This is a corollary of Lemma~2.5 from~\cite{BT07a}.
\qed
\end{proof}

Not all non-trivial inversive meadows are zero-totalized fields, e.g.\ 
the initial inversive meadow is not a zero-totalized field.
Nevertheless, we have the following theorem.
\begin{theorem}
\label{theorem-Md-F0}
The equational theory of inversive meadows and the equational theory of 
zero-totalized fields are the same.
\end{theorem}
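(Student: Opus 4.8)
The statement splits into two inclusions, one of which is immediate. Every zero-totalized field is an inversive meadow, so every equation that holds in all inversive meadows holds in all zero-totalized fields; hence the equational theory of inversive meadows is included in the equational theory of zero-totalized fields. The substantive part is the converse: if an equation $p = q$ holds in all zero-totalized fields, it holds in all inversive meadows (equivalently, is derivable from $\eqnsimd$). The plan is to establish this by showing that \emph{every inversive meadow embeds into a direct product of zero-totalized fields}. Since the class of algebras satisfying a fixed equation is closed under the formation of direct products and of subalgebras, an equation valid in all zero-totalized fields is then valid in all such products and in all subalgebras of them, hence in every inversive meadow.

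To obtain the embedding, let $M$ be an inversive meadow and let $R$ be its $\sigcr$-reduct. The axiom $x \mmul (x \mmul x\minv) = x$ says precisely that $R$ is a commutative von Neumann regular ring, with $x\minv$ serving as a weak inverse of $x$. From this one reads off the standard consequences: $R$ is reduced (if $a \mmul a = 0$ then $a = a \mmul a \mmul a\minv = 0$), and for every prime ideal $\mathfrak{p}$ the quotient $R/\mathfrak{p}$ is a von Neumann regular integral domain and therefore a field, so every prime ideal of $R$ is maximal. Consequently the intersection of all maximal ideals of $R$ equals the nilradical, which is $\set{0}$, and the canonical map $R \to \prod_{\mathfrak{m}} R/\mathfrak{m}$ (with $\mathfrak{m}$ ranging over the maximal ideals of $R$) is an injective ring homomorphism into a product of fields.

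It remains to check that this map is in fact an injective $\sigimd$-homomorphism into a product of \emph{zero-totalized} fields, i.e.\ that $\minv$ is respected coordinatewise. Equip each $R/\mathfrak{m}$ with the zero-totalized multiplicative inverse. For $a \in M$ whose image $\bar a$ in $R/\mathfrak{m}$ is nonzero, the identity $\bar a \mmul \bar a \mmul \overline{a\minv} = \bar a$ together with cancellation in the field forces $\overline{a\minv}$ to be the field inverse of $\bar a$; and if $\bar a = 0$, then passing to the quotient in $(x\minv)\minv = x$ shows that the field inverse of $\overline{a\minv}$ is $0$, whence $\overline{a\minv} = 0$. So $\minv$ is preserved, and $M$ embeds into $\prod_{\mathfrak{m}} R/\mathfrak{m}$, a direct product of zero-totalized fields. (The trivial meadow is handled separately, or as the empty product.) By Theorem~\ref{theorem-cMd-F0} one may equivalently phrase the target class as the non-trivial inversive cancellation meadows.

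The main obstacle is the verification in the last paragraph: the totalized inverse of a meadow is defined "artificially" at zero, and one must make sure it matches up, coordinate by coordinate, with the totalized inverse of each residue field — which is exactly where uniqueness of weak inverses in a field, and the law $(x\minv)\minv = x$, are used. Once the meadow reduct is recognized as a commutative von Neumann regular ring, the ring-theoretic ingredients (reducedness, primes being maximal, trivial nilradical, hence the subdirect representation by residue fields) are routine, and the rest is an application of Birkhoff's characterization of the equational theory of a class by that of the variety it generates.
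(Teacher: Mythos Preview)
Your argument is correct and is precisely the subdirect-representation proof underlying the cited result: the paper itself gives no self-contained argument, merely pointing to Theorem~3.10 of~\cite{BHT09a}, whose content is exactly that every inversive meadow embeds into a product of zero-totalized fields via its commutative von Neumann regular ring reduct. One small remark on presentation: in the case $\bar a = 0$, the cleanest way to see $\overline{a\minv} = 0$ is to observe that the meadow axiom applied to $a\minv$, combined with $(a\minv)\minv = a$, yields $a\minv = (a\minv)^2 \mmul a \in \mathfrak{m}$ directly; this avoids any appearance of circularity in speaking of ``the field inverse of $\overline{a\minv}$'' before compatibility of $\minv$ with the quotient has been fully established.
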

\begin{proof}
This is Theorem~3.10 from~\cite{BHT09a}.
\qed
\end{proof}
Theorem~\ref{theorem-Md-F0} can be read as follows: $\eqnsimd$ is a 
finite basis for the equational theory of inversive cancellation 
meadows.

The inversive cancellation meadow that we are most interested in is 
$\Ratzi$, the zero-totalized field of rational numbers.
$\Ratzi$ differs from the field of rational numbers only in that the
multiplicative inverse of zero is zero.%
\begin{theorem}
\label{theorem-Ratzi}
$\Ratzi$ is the initial algebra among the total algebras over the 
signature $\sigimd$ that satisfy the equations
\begin{ldispl}
\eqnsimd \union \set{\ul{n} \mmul \ul{n}\minv) = 1 \where n \in \Natpos}
\quad\footnotemark
\end{ldispl}%
\footnotetext
{We write $\Natpos$ for the set $\set{n \in \Nat \where n \neq 0}$ of 
 positive natural numbers.}%
or, equivalently, the equations
\begin{ldispl}
\eqnsimd \union \set{(1 + x^2 + y^2) \mmul (1 + x^2 + y^2)\minv = 1}\;.
\end{ldispl}%
\end{theorem}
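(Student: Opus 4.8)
The plan is to treat the two axiomatizations separately. Write $E_1$ for $\eqnsimd \union \set{\ul{n} \mmul \ul{n}\minv = 1 \where n \in \Natpos}$ and $E_2$ for $\eqnsimd \union \set{(1 + x^2 + y^2) \mmul (1 + x^2 + y^2)\minv = 1}$. I would first prove that $\Ratzi$ is the initial model of $E_1$, and then show that $E_2$ derives every equation of $E_1$ and still has $\Ratzi$ as a model. Since $\Ratzi$ is minimal, the first of these facts makes the closed-term completeness established for $E_1$ hold a fortiori for $E_2$, and the second keeps $\Ratzi$ in the model class; hence initiality transfers from $E_1$ to $E_2$, which is what the word ``equivalently'' is meant to express here.

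For $\Ratzi$ as the initial model of $E_1$, I would use the standard criterion for a minimal algebra to be initial: (i)~$\Ratzi \models E_1$; (ii)~$\Ratzi$ is minimal, i.e.\ every element of $\Ratzi$ is the value of a closed $\sigimd$-term; and (iii)~for all closed $\sigimd$-terms $p,q$, $\Ratzi \models p = q$ implies $E_1 \vdash p = q$. Then the map assigning to the $\Ratzi$-value of a closed term $t$ its value in an arbitrary model $A$ of $E_1$ is, by~(iii), a well-defined function, which is total on $\Ratzi$ by~(ii), a homomorphism because evaluation is compositional, and the unique such homomorphism because $\Ratzi$ is minimal. Item~(i) is immediate: $\Ratzi$ is a zero-totalized field, hence an inversive cancellation meadow by Theorem~\ref{theorem-cMd-F0}, so it satisfies $\eqnsimd$, and since it has characteristic $0$ each $\ul{n}$ with $n \geq 1$ denotes a nonzero rational, so $\ul{n} \mmul \ul{n}\minv = 1$ holds. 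Item~(ii) is also routine: the subalgebra generated by the empty set contains all integer values (via the constants $0,1$ and addition and additive inverse), hence all values $\ul{n}\minv$, hence all values $\ul{m} \mmul \ul{n}\minv$, which already exhausts $\Ratzi$.

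The work is in~(iii), which I would settle by a normal-form argument along the lines of~\cite{BT07a}. I claim that, using $E_1$, every closed $\sigimd$-term equals either the term $0$ or a term $\ul{m} \mmul \ul{n}\minv$ with $n \geq 1$, $m \neq 0$ and $\gcd(|m|,n) = 1$; call such terms \emph{normal forms}. This is shown by induction on term structure, checking that each operation applied to normal forms can be driven back to a normal form: multiplication and multiplicative inverse are handled with $(x \mmul y)\minv = x\minv \mmul y\minv$ and $(x\minv)\minv = x$ from Proposition~\ref{prop-iMd-derivable}; addition uses the identity $\ul{q_1}\minv + \ul{q_2}\minv = (\ul{q_1} + \ul{q_2}) \mmul (\ul{q_1} \mmul \ul{q_2})\minv$, whose derivation needs exactly $\ul{q_1} \mmul \ul{q_1}\minv = 1$ and $\ul{q_2} \mmul \ul{q_2}\minv = 1$ (this identity fails in general meadows); and removing a common factor $k > 0$ from numerator and denominator uses precisely $\ul{k} \mmul \ul{k}\minv = 1$. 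Distinct normal forms denote distinct elements of $\Ratzi$ (namely $0$ and the $m/n$ in lowest terms), so two closed terms with the same value in $\Ratzi$ reduce to the same normal form and are thus provably equal from $E_1$, giving~(iii) and hence initiality of $\Ratzi$ for $E_1$.

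Finally, for $E_2$ I would show $E_2 \vdash \ul{n} \mmul \ul{n}\minv = 1$ for every $n \in \Natpos$. Let $S$ be the set of those $n$ for which this holds. Instantiating $x,y$ in the $E_2$-axiom by $\ul{a},\ul{b}$ and using the ring axioms shows $1 + a^2 + b^2 \in S$ for all $a,b \in \Nat$. Moreover $S$ is closed under multiplication (by $(x \mmul y)\minv = x\minv \mmul y\minv$) and under divisors: in any meadow $u \mmul v = 1$ implies $u \mmul u\minv = u \mmul u\minv \mmul (u \mmul v) = (u \mmul (u \mmul u\minv)) \mmul v = u \mmul v = 1$, and $\ul{k\ell} \mmul \ul{k\ell}\minv = 1$ reads $\ul{k} \mmul (\ul{\ell} \mmul \ul{k\ell}\minv) = 1$, so $k \in S$. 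Now for every prime $p$ there are $a,b \in \Nat$ with $p \mid 1 + a^2 + b^2$: for odd $p$ the $(p+1)/2$ residues $a^2$ and the $(p+1)/2$ residues $-1-b^2$ modulo $p$ must overlap, and $p = 2$ divides $1 + 1^2 + 0^2$. Hence divisor-closure puts every prime in $S$ and multiplicative closure gives $S = \Natpos$. Since $\Ratzi$ is a subfield of the reals, $1 + a^2 + b^2$ is never $0$ there, so $\Ratzi \models E_2$, and initiality of $\Ratzi$ now transfers to $E_2$ as explained. The main obstacle is the bookkeeping in step~(iii): verifying that each operation preserves the normal-form shape is exactly where the meadow axioms and the extra equations do their work, and it is essentially the content of the corresponding result in~\cite{BT07a}; the elementary two-squares-plus-one lemma used for $E_2$ is the only other point that needs a moment's care.
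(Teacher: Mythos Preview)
Your argument is correct. The paper's own proof is simply a citation --- Theorem~3.1 of~\cite{BT07a} for the first axiomatization and Theorem~9 of~\cite{BM09g} for the second --- and your sketch unpacks essentially what those references do: the normal-form/closed-term-completeness argument for $E_1$ is exactly the content of~\cite{BT07a}, and your treatment of $E_2$ (divisor- and product-closure of the set $S$, together with the pigeonhole fact that every prime divides some $1+a^2+b^2$) matches the argument in~\cite{BM09g}.
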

\begin{proof}
As for the first set of equations, this is Theorem~3.1 in~\cite{BT07a}.
As for the second set of equations, this is Theorem~9 in~\cite{BM09g}. 
\qed
\end{proof}

The division operator can be explicitly defined in terms of the 
multiplicative inverse operator by the equation 
$x \mdiv y = x \mmul y\minv$ and the multiplicative inverse operator can 
be explicitly defined in terms of the division operator by the equation 
$x\minv = 1 \mdiv x$.
In fact, $\eqnsimd$ and $\eqnsdmd$ are essentially the same in the sense 
which is made precise in the following theorem.
\begin{theorem}
\label{theorem-defeqv-iMd-dMd}
$\eqnsimd$ is definitionally equivalent to $\eqnsdmd$,%
\footnote
{The notion of definitional equivalence originates from~\cite{Bou65a},
 where it was introduced, in the setting of first-order theories, under 
 the name of synonymy.
 In~\cite{Tay79a}, the notion of definitional equivalence was introduced
 in the setting of equational theories under the ambiguous name of
 equivalence.
 An abridged version of~\cite{Tay79a} appears in~\cite{Gra08a}.}
i.e.
\begin{ldispl}
\eqnsimd \union \set{x \mdiv y = x \mmul y\minv}
 \vdash
\eqnsdmd \union \set{x\minv = 1 \mdiv x}
\\
\hfill \mathrm{and} \hfill \phantom{\,}
\\
\eqnsdmd \union \set{x\minv = 1 \mdiv x}
 \vdash
\eqnsimd \union \set{x \mdiv y = x \mmul y\minv}\;.
\end{ldispl}%
\end{theorem}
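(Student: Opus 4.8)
The statement asserts a two-way definitional equivalence between the inversive and divisive axiomatizations. The natural strategy is to prove each of the two displayed derivability claims symmetrically, and in each direction to split the work into two parts: first show that the axioms of the target theory are derivable, and then show that the explicit definition of the target theory's extra operator is derivable.

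Consider first the direction $\eqnsimd \union \set{x \mdiv y = x \mmul y\minv} \vdash \eqnsdmd \union \set{x\minv = 1 \mdiv x}$. The equations $\eqnscr$ are literally shared, so they are trivially available. For the three equations of Table~\ref{eqns-add-divisive-meadow}: the equation $x \mdiv y = x \mmul (1 \mdiv y)$ follows by rewriting both $\mdiv$-occurrences with the defining equation $p \mdiv q = p \mmul q\minv$ and using $x \mmul 1\minv = x \mmul 1 = x$ together with $1\minv = 1$ from Proposition~\ref{prop-iMd-derivable} (so that $1 \mdiv y = 1 \mmul y\minv = y\minv$); the equation $1 \mdiv (1 \mdiv x) = x$ then reduces to $(x\minv)\minv = x$, which is an axiom of $\eqnsimd$; and $(x \mmul x) \mdiv x = x$ reduces to $(x \mmul x) \mmul x\minv = x$, which is exactly $x \mmul (x \mmul x\minv) = x$ up to commutativity and associativity. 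Finally, the defining equation of the new theory, $x\minv = 1 \mdiv x$, is proved simply by expanding $1 \mdiv x = 1 \mmul x\minv = x\minv$.

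For the converse direction $\eqnsdmd \union \set{x\minv = 1 \mdiv x} \vdash \eqnsimd \union \set{x \mdiv y = x \mmul y\minv}$, again $\eqnscr$ is shared. For the two equations of Table~\ref{eqns-add-inversive-meadow}: $(x\minv)\minv = x$ becomes $1 \mdiv (1 \mdiv x) = x$ after two applications of the definition $p\minv = 1 \mdiv p$, and that is an axiom of $\eqnsdmd$; $x \mmul (x \mmul x\minv) = x$ becomes $x \mmul (x \mmul (1 \mdiv x)) = x$, and using $x \mmul (1 \mdiv x) = x \mdiv x$ (the third divisive axiom) this is $x \mmul (x \mdiv x)$; one then shows $x \mmul (x \mdiv x) = (x \mmul x) \mdiv x$ using the third divisive axiom again together with commutativity and associativity, and concludes by the second divisive axiom $(x \mmul x) \mdiv x = x$. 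The remaining defining equation $x \mdiv y = x \mmul y\minv$ follows by expanding $y\minv = 1 \mdiv y$ on the right and applying the third divisive axiom $x \mdiv y = x \mmul (1 \mdiv y)$ in reverse.

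I expect no serious obstacle here: each required identity is a short equational manipulation, and all the auxiliary facts needed on the inversive side (notably $1\minv = 1$) are already recorded in Proposition~\ref{prop-iMd-derivable}. The one place demanding a little care is the treatment of the mixed-operator term $x \mmul (x \mdiv x)$: one must be deliberate about when to view $\mdiv$ via its definition in terms of $\mmul$ and $\minv$ (or, on the divisive side, via the axiom $x \mdiv y = x \mmul (1 \mdiv y)$) so that the ring axioms can be applied to reorder and regroup the product before folding the result back into $(x \mmul x) \mdiv x$. Everything else is bookkeeping.
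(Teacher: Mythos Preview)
Your proposal is correct and follows the same decomposition as the paper: observe that $\eqnscr$ is shared, then verify the handful of remaining equations in each direction. The paper's own proof simply declares all of these derivations ``trivial'' without spelling them out, so your version is in fact more detailed than what the paper provides. One minor remark: your appeal to $1\minv = 1$ from Proposition~\ref{prop-iMd-derivable} is not actually needed, since $1 \mdiv y = 1 \mmul y\minv = y\minv$ already follows from the ring identity $1 \mmul z = z$ alone.
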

\begin{proof}
Because $\eqnsinv$ and $\eqnsdiv$ have $\eqnscr$ in common, in one 
direction, we only have to prove the derivability of 
$\eqnsdiv \union \set{x\minv = 1 \mdiv x}$ 
and, in the other direction, we only have to prove the derivability of 
$\eqnsinv \union \set{x \mdiv y = x \mmul y\minv}$.
The derivability of all equations involved is trivial.
\qed
\end{proof}

By the definitional equivalence of $\eqnsimd$ and $\eqnsdmd$, we have:
\begin{enumerate}
\item[(a)]
there exist a mapping $\epsilon$ from the set of all equations over 
$\sigdmd$ to the set of all equations over $\sigimd$ and a mapping 
$\epsilon'$ from the set of all equations over $\sigimd$ to the set of 
all equations over $\sigdmd$ such that, for each equation $\phi$ over 
$\sigdmd$ and each equation $\phi'$ over $\sigimd$,
\begin{ldispl}
\begin{geqns}
\eqnsimd \vdash \epsilon(\phi)  \;\;\mathrm{if}\;\; \eqnsdmd \vdash \phi\;,
\\
\eqnsdmd \vdash \epsilon'(\epsilon(\phi)) \Iff \phi\;,
\end{geqns}
\quad
\begin{geqns}
\eqnsdmd \vdash \epsilon'(\phi') \;\;\mathrm{if}\;\; \eqnsimd \vdash \phi'\;,
\\
\eqnsimd \vdash \epsilon(\epsilon'(\phi')) \Iff \phi'\;;
\end{geqns}
\end{ldispl}%
\item[(b)]
there exist a mapping $\alpha$ from the class of all divisive meadows to 
the class of all inversive meadows that maps each divisive meadow $\cM'$ 
to the restriction to $\sigimd$ of the unique expansion of $\cM'$ for 
which $\eqnsdmd \union \set{x\minv = 1 \mdiv x}$ holds 
and a mapping $\alpha'$ from the class of all inversive meadows to the 
class of all divisive meadows that maps each inversive meadow $\cM$ to 
the restriction to $\sigdmd$ of the unique expansion of $\cM$ for which 
$\eqnsimd \union \set{x \mdiv y = x \mmul y\minv}$ holds such that 
$\alpha \circ \alpha'$ and $\alpha' \circ \alpha$ are identity mappings.
\end{enumerate}
Let $\epsilon$ and $\alpha$ be as under~(a) and~(b) above.
Then it follows that, for each equation $\phi$ over $\sigdmd$ and 
divisive meadow $\cM$:   
\begin{itemize}
\item 
$\eqnsdmd \vdash \phi$ iff $\eqnsimd \vdash \epsilon(\phi)$;
\item
$\cM \models \phi$ iff $\alpha(\cM) \models \epsilon(\phi)$.
\end{itemize}
\sloppy
From many results about inversive meadows (including the ones presented
above), counterparts about divisive meadows follow immediately using 
these consequences of the definitional equivalence of $\eqnsimd$ and 
$\eqnsdmd$.
This is the main reason why the survey given in this section is not 
restricted to divisive meadows.

Henceforth, ``meadow'' without ``inversive'' or ``divisive'' as 
qualifier stands for ``inversive or divisive meadow''.

A meadow is \emph{finite} if its carrier is finite and it is 
\emph{infinite} if its carrier is infinite. 
A meadow $\cM$ is a (\emph{proper}) \emph{submeadow} of a meadow $\cM'$ 
if $\cM$ is a (proper) subalgebra of $\cM'$.
A meadow is \emph{minimal} if it does not have a proper submeadow. 
The \emph{characteristic} of a meadow is the smallest $k \in \Natpos$ 
for which it satisfies $\ul{k} = 0$.
A meadow is said to have characteristic $0$ if there does not exist an 
$k \in \Natpos$ for which it satisfies $\ul{k} = 0$.
A $k \in \Natpos$ is called \emph{square-free} if it is the product of 
distinct prime numbers.

If a meadow has characteristic $0$, then it also satisfies, for all 
$k,k' \in \Nat$, $\ul{k} \neq \ul{k'}$ if $k \neq k'$.
Hence, a meadow has characteristic $0$ only if its minimal submeadow is 
infinite.

The infinite divisive meadow that we are most interested in is $\Ratzd$, 
the zero-totalized field of rational numbers with the multiplicative 
inverse operation replaced by a division operation.
It follows immediately from Theorems~\ref{theorem-Ratzi} 
and~\ref{theorem-defeqv-iMd-dMd} that $\Ratzd$ is the initial algebra 
among the total algebras over the signature $\sigdmd$ that satisfy the 
equations 
$\eqnsdmd \union
 \set{\ul{n} \mdiv \ul{n} = 1 \where \ul{n} \in \Natpos}$.

The finite divisive meadows that we are most interested in are, for each 
square-free $k \in \Natpos$, $\Mdd{k}$, the initial algebra among the 
total algebras over the signature $\sigdmd$ that satisfy the equations 
$\eqnsdmd \union \set{\ul{k} = 0}$.
It follows immediately from Lemma~4.6 in~\cite{BHT09a} and 
Theorem~\ref{theorem-defeqv-iMd-dMd} in this paper that each minimal 
divisive meadow of characteristic $k$ is isomorphic to $\Mdd{k}$.
It follows immediately from Theorem~4.4 and Lemma~4.6 in~\cite{BHT09a}, 
Corollary~3.10 in~\cite{BRS09a}, and 
Theorem~\ref{theorem-defeqv-iMd-dMd} in this paper that, if $k$ is a 
prime number, $\Mdd{k}$ is the zero-totalized prime field of 
characteristic $k$ with the multiplicative inverse operation replaced 
by a division operation.

Several results established in 
Sections~\ref{sect-auxiliary-results}--\ref{sect-closed-fractions} 
are exclusively concerned with the divisive meadow $\Ratzd$.
The divisive meadows $\Mdd{k}$ play a role in various proofs given in 
Sections~\ref{sect-fractions}--\ref{sect-miscellaneous}.

\section{Definitions Concerning Fractions and Polynomials}
\label{sect-basic-defs-fractions}

In this section, we give the definitions concerning fractions on which 
the subsequent sections are based.
Because polynomials play a role in those sections as well, we also give
several definitions concerning polynomials in the setting of divisive 
meadows.

Henceforth, we will use the following convenient notational convention.
If we introduce a term $t$ as $t(x_1,\ldots,x_n)$, where 
$x_1,\ldots,x_n$ are distinct variables, this indicates that all 
variables that have occurrences in $t$ are among $x_1,\ldots,x_n$.
In the same context, $t(t_1,\ldots,t_n)$ is the term obtained by 
simultaneously replacing in $t$ all occurrences of $x_1$ by $t_1$ and 
\ldots\ and all occurrences of $x_n$ by $t_n$.

Fractions are viewed as terms over the signature of divisive meadows 
that are of a particular form.
This means that fractions may contain variables and may be interpreted 
in different divisive meadows. 
Thus, the view of fractions as terms leads to the need to make a 
distinction between fractions and closed fractions and to consider 
properties of fractions relative to a particular divisive meadow.
In this light, the definitions given below speak for themselves.

The following four definitions concern fractions by themselves:
\begin{itemize}
\item
a \emph{fraction} is a term over the signature $\sigdmd$ whose outermost 
operator is the operator $\mdiv$; 
\item
a \emph{simple fraction} is a fraction of which no proper subterm is 
a fraction; 
\item
a \emph{closed fraction} is a fraction that is a closed term;
\item
a \emph{simple closed fraction} is a simple fraction that is a closed 
term.
\end{itemize}
Let $E \supseteq \eqnsdmd$ be a set of equations over the signature 
$\sigdmd$, and let $\cM$ be a model of $E$. 
The following four definitions concern the transformation of terms into 
simple fractions:
\begin{itemize}
\item
$\cM$ \emph{admits transformation into simple fractions} if, for each 
term $p$ over the signature $\sigdmd$, there exists a simple fraction 
$q$ such that $\cM \models p = q$;
\item
$\cM$ \emph{admits transformation into simple fractions for closed terms} 
if, for each closed term $p$ over the signature $\sigdmd$, there exists 
a simple closed fraction $q$ such that $\cM \models p = q$;
\item
$E$ \emph{admits transformation into simple fractions} if, for each 
term $p$ over the signature $\sigdmd$, there exists a simple fraction 
$q$ such that $E \deriv p = q$;
\item
$E$ \emph{admits transformation into simple fractions for closed terms} 
if, for each closed term $p$ over the signature $\sigdmd$, there exists 
a simple closed fraction $q$ such that $E \deriv p = q$.
\end{itemize}
We believe that a fraction corresponds to what is usually meant by an 
algebraic fraction.
We are not sure to what extent a closed fraction corresponds to what is 
usually meant by a numerical fraction because definitions of the latter 
are mostly so imprecise that form and meaning seem to be mixed up.

Notice that, for each term $p$ over the signature $\sigdmd$, 
$\eqnsdmd \deriv p = p \mdiv 1$.
This means that each term $p$ over the signature $\sigdmd$ that is not a
fraction can be turned into a fraction in a trivial way.

In subsequent sections, we will phrase some results about the 
transformation of terms into simple fractions in which we refer in one 
way or another to polynomials.
Therefore, we also give several definitions concerning polynomials in 
the setting of divisive meadows.
A polynomial as defined below corresponds to what is usually meant by a 
(univariate) polynomial.

Let $x,y$ be variables, and let $\cM$ be a divisive meadow.  
The following definitions concern polynomials:
\begin{itemize}
\item
a \emph{polynomial} in the variable $x$ is a term over the signature 
$\sigdmd$ in which the operator $\mdiv$ does not occur and variables
other than $x$ do not occur;
\item
a \emph{root of} a polynomial $f(x)$ \emph{over} $\cM$ is an element $v$ 
of the carrier of $\cM$ such that $\cM$ satisfies the equation 
$f(x) = 0$ if the value assigned to $x$ is $v$;
\item
the \emph{polynomial function induced by} a polynomial $f(x)$ 
\emph{over} $\cM$ is the unary function $F$ on the carrier of $\cM$ such 
that, for each element $v$ of the carrier of $\cM$, $F(v)$ is the 
interpretation of $f(x)$ in $\cM$ if the value assigned to $x$ is~$v$;
\item
a polynomial $f(x)$ \emph{is $\cM$-equivalent to} a polynomial $g(y)$, 
written $f(x) \equiv_\cM g(y)$, if $\cM \models f(x) = g(x)$;
\item
a polynomial \emph{is in canonical form} if it is of the form 
$a_n \mmul x^n + \ldots + a_1 \mmul x + a_0$, 
where $a_i \in \set{\ul{m} \where m \in \Int}$ for each 
$i \in \set{1,\ldots,n}$.
\end{itemize}
The closed terms $a_i \in \set{\ul{m} \where m \in \Int}$ occurring in 
a polynomial in canonical form are called \emph{coefficients}.

It is a generally known fact that, for each polynomial $f(x)$, there 
exists a polynomial $g(x)$ in canonical form such that 
$\eqnscr \deriv f(x) = g(x)$.
This fact can be straightforwardly proved by induction on the structure 
of a polynomial.
The following definitions concerning polynomials are based on this fact:
\begin{itemize}
\item
a polynomial $f(x)$ \emph{is non-trivial over} $\cM$ if there exist a
polynomial $g(x)$ in canonical form, an 
$a \in \set{\ul{m} \where m \in \Int}$, and an $i \in \Nat$ such that 
$\eqnscr \deriv f(x) = g(x)$, $a \mmul x^i$ is a summand of $g(x)$, and
$\cM \not\models a = 0$;
\item
a polynomial $f(x)$ \emph{is constant over} $\cM$ if $f(x)$ is 
non-trivial over $\cM$ and there exists an 
$a \in \set{\ul{m} \where m \in \Int}$ such that $\cM \models f(x) = a$;
\item
the \emph{degree of} a polynomial $f(x)$ \emph{over} $\cM$ is:
\begin{itemize}
\item
if $f(x)$ is non-trivial and not constant over $\cM$, then the largest 
$i \in \Natpos$ for which there exists a polynomial $g(x)$ in canonical 
form and an $a \in \set{\ul{m} \where m \in \Int}$ such that 
$\eqnscr \deriv f(x) = g(x)$, $a \mmul x^i$ is a summand of $g(x)$, and
$\cM \not\models a = 0$;
\item
if $f(x)$ is non-trivial and constant over $\cM$, then $0$;
\item
if $f(x)$ is not non-trivial over $\cM$, then undefined.
\end{itemize}
\end{itemize}
Notice that a simple fraction is a fraction of which the two outermost 
proper subterms are polynomials.

Henceforth, we will often leave out the qualifier ``over $\cM$'' used 
in the definienda above in contexts where only one divisive meadow is 
under discussion.

\section{Auxiliary Results Concerning Divisive Meadows}
\label{sect-auxiliary-results}

In this section, we establish some results concerning divisive meadows 
that will be used in Sections~\ref{sect-fractions} 
and~\ref{sect-closed-fractions} to establish results about the 
transformation of terms into simple fractions in divisive meadows.

The following proposition, which concerns equations making the nature of 
the division operation in divisive meadows more clear, is useful in many
proofs.
\begin{proposition}
\label{prop-dMd-derivable}
The equations 
\begin{ldispl}
1 \mdiv 0 = 0\;, \quad
1 \mdiv 1 = 1\;, \quad
1 \mdiv (- x) = - (1 \mdiv x)\;, \quad
1 \mdiv (x \mmul y) = (1 \mdiv x) \mmul (1 \mdiv y)\;, \\
(x \mdiv y) \mmul (z \mdiv w) = (x \mmul z) \mdiv (y \mmul w)\;, \hfill
(x \mdiv y) \mdiv (z \mdiv w) = (x \mmul w) \mdiv (y \mmul z)
\phantom{\;,}
\end{ldispl}%
are derivable from the equations $\eqnsdmd$.
\end{proposition}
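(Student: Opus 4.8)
The plan is to derive each of the six equations from $\eqnsdmd$, making heavy use of the third divisive-meadow axiom $x \mdiv y = x \mmul (1 \mdiv y)$ to reduce everything to statements about $\mmul$ and the unary operation $1 \mdiv {}$, and then transporting known facts about the multiplicative inverse via Theorem~\ref{theorem-defeqv-iMd-dMd} and Proposition~\ref{prop-iMd-derivable}. Concretely, since $\eqnsdmd \union \set{x\minv = 1 \mdiv x} \vdash \eqnsimd$, the unary term $1 \mdiv x$ behaves in every divisive meadow exactly as $x\minv$ behaves in the corresponding inversive meadow. So the first four equations follow by translating the four equations of Proposition~\ref{prop-iMd-derivable}: $0\minv = 0$ gives $1 \mdiv 0 = 0$, $1\minv = 1$ gives $1 \mdiv 1 = 1$, $(-x)\minv = -(x\minv)$ gives $1 \mdiv (-x) = -(1 \mdiv x)$, and $(x \mmul y)\minv = x\minv \mmul y\minv$ gives $1 \mdiv (x \mmul y) = (1 \mdiv x) \mmul (1 \mdiv y)$. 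Each is obtained by rewriting with the axiom $x\minv = 1 \mdiv x$ and the fact that that axiom together with $\eqnsdmd$ proves $\eqnsimd$.

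For the last two equations I would work directly with the axioms. For $(x \mdiv y) \mmul (z \mdiv w) = (x \mmul z) \mdiv (y \mmul w)$: expand the left side using $x \mdiv y = x \mmul (1 \mdiv y)$ twice to get $x \mmul (1 \mdiv y) \mmul z \mmul (1 \mdiv w)$; reassociate and commute (using $\eqnscr$) to $(x \mmul z) \mmul ((1 \mdiv y) \mmul (1 \mdiv w))$; then apply the already-derived equation $1 \mdiv (y \mmul w) = (1 \mdiv y) \mmul (1 \mdiv w)$ backwards to obtain $(x \mmul z) \mmul (1 \mdiv (y \mmul w))$, which is $(x \mmul z) \mdiv (y \mmul w)$ by the division axiom. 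For $(x \mdiv y) \mdiv (z \mdiv w) = (x \mmul w) \mdiv (y \mmul z)$: rewrite the outer division as $(x \mdiv y) \mmul (1 \mdiv (z \mdiv w))$; rewrite $z \mdiv w$ as $z \mmul (1 \mdiv w)$ and apply $1 \mdiv (z \mmul (1 \mdiv w)) = (1 \mdiv z) \mmul (1 \mdiv (1 \mdiv w))$, then use $1 \mdiv (1 \mdiv w) = w$ (the first divisive-meadow axiom) to get $(1 \mdiv z) \mmul w$; substitute back and rearrange using the just-proved product rule $(x \mdiv y) \mmul (z \mdiv w) = (x \mmul z) \mdiv (y \mmul w)$ with $z := w$ and $w := z$, yielding $(x \mmul w) \mdiv (y \mmul z)$.

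The main obstacle is purely bookkeeping: keeping the associativity/commutativity rearrangements straight and making sure the derived equations are applied to the correct subterms (in particular the substitution instances of the fourth equation and of $1 \mdiv (1 \mdiv x) = x$). There is no conceptual difficulty — each step is an equational rewrite sanctioned either by a commutative-ring axiom, a divisive-meadow axiom, an earlier item of this very proposition, or Proposition~\ref{prop-iMd-derivable} transported through the definitional equivalence of Theorem~\ref{theorem-defeqv-iMd-dMd}. Accordingly I would present the first four equations as immediate consequences of Proposition~\ref{prop-iMd-derivable} together with the definitional equivalence, and write out the derivations of the last two in a few lines of equational reasoning each, citing the first four as needed.
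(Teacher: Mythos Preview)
Your proposal is correct and follows essentially the same approach as the paper: the first four equations are obtained from Proposition~\ref{prop-iMd-derivable} via the definitional equivalence of $\eqnsimd$ and $\eqnsdmd$ (Theorem~\ref{theorem-defeqv-iMd-dMd}), and the last two are derived using the fourth equation together with the division axiom $x \mdiv y = x \mmul (1 \mdiv y)$. The paper's proof is terser --- it simply says the last two are ``easily derivable using the fourth equation'' --- but your explicit derivations are exactly the computations that phrase summarizes.
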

\begin{proof}
The derivability of the first four equations follow immediately from 
Proposition~\ref{prop-iMd-derivable} and the definitional equivalence of
$\eqnsimd$ and $\eqnsdmd$.
The last two equations are easily derivable using the fourth equation.
\qed
\end{proof}

The following proposition is also useful in several proofs.
\begin{proposition}
\label{prop-numerals}
For all $n,m \in \Nat$, the equations $\ul{n + m} = \ul{n} + \ul{m}$ and
$\ul{n \mmul m} = \ul{n} \mmul \ul{m}$ are derivable from the equations 
$\eqnsdmd$.
\end{proposition}
\begin{proof}
This is proved like Lemma~1 in~\cite{BM09g}.
\qed
\end{proof}

Closed terms over the signature of divisive meadows can be reduced to a
basic term.
The set $\cB$ of \emph{basic terms} over $\sigdmd$ is inductively 
defined by the following rules:
\begin{itemize}
\item
$\ul{0} \in \cB$; 
\item
if $n,m \in \Natpos$, then $\ul{n} \mdiv \ul{m} \in \cB$; 
\item
if $n,m \in \Natpos$, then $- (\ul{n} \mdiv \ul{m}) \in \cB$; 
\item
if $p,q \in \cB$, then $p + q \in \cB$. 
\end{itemize}
\begin{theorem}
\label{theorem-basic-terms-dmd}
For all closed terms $p$ over $\sigdmd$, there exists a $q \in \cB$ such
that $\eqnsdmd \deriv p = q$.
\end{theorem}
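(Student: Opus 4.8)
The plan is to proceed by induction on the structure of a closed term $p$ over $\sigdmd$, showing in each case that $p$ can be rewritten, using $\eqnsdmd$, to a basic term in $\cB$. The base cases are the constants $0$ and $1$: for $0$ we have $\ul{0} \in \cB$ directly, and for $1$ we note that $1 = \ul{1} \mdiv \ul{1}$ is derivable since $\eqnsdmd \deriv 1 \mdiv 1 = 1$ by Proposition~\ref{prop-dMd-derivable}, and $\ul{1} \mdiv \ul{1} \in \cB$ because $1 \in \Natpos$. For the inductive step, I would assume that the proper subterms of $p$ have already been reduced to basic terms, so it suffices to show that $\cB$ is closed, modulo derivability from $\eqnsdmd$, under the four operations $+$, $\mmul$, $-{}$, and $\mdiv$ applied to basic terms.

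The key observation is that a basic term is, up to derivability, a finite sum of signed ``elementary'' basic terms of the form $\pm(\ul{n} \mdiv \ul{m})$ with $n, m \in \Natpos$, together with possible occurrences of $\ul{0}$. Addition is handled by the last clause of the definition of $\cB$, so that case is immediate. For additive inverse, I would push $-{}$ inward over the sum using the commutative-ring axioms (distributing the minus over each summand), turning $-\ul{0}$ into $\ul{0}$ and flipping the signs of the elementary terms, with $-(-(\ul{n}\mdiv\ul{m})) = \ul{n}\mdiv\ul{m}$ handled by the ring axioms. For multiplication and division, after distributing over the sums it suffices to treat products and quotients of two elementary terms; here Proposition~\ref{prop-dMd-derivable} does the work, since $(\ul{n}\mdiv\ul{m}) \mmul (\ul{n'}\mdiv\ul{m'}) = (\ul{n}\mmul\ul{n'}) \mdiv (\ul{m}\mmul\ul{m'})$ and $(\ul{n}\mdiv\ul{m}) \mdiv (\ul{n'}\mdiv\ul{m'}) = (\ul{n}\mmul\ul{m'}) \mdiv (\ul{m}\mmul\ul{n'})$, and then Proposition~\ref{prop-numerals} rewrites $\ul{n}\mmul\ul{n'}$ as $\ul{nn'}$, keeping us inside the stock of numerals. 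The signs multiply in the obvious way, and any factor involving $\ul{0}$ (either as a summand or as a quotient $\ul{0}\mdiv\ul{m}$, which equals $\ul{0}$, or a division $\ul{n}\mdiv\ul{0}$, which equals $\ul{0}$ by Proposition~\ref{prop-dMd-derivable}) collapses the corresponding product to $\ul{0}$.

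The main obstacle I anticipate is purely bookkeeping: carefully organizing the normal form so that multiplication and division of two sums of elementary terms can be reduced term-by-term and then reassembled as a sum in $\cB$, while correctly tracking the sign bookkeeping and the degenerate cases where a numerator or a whole summand is $\ul{0}$. There is no conceptual difficulty — every needed identity is already available from Propositions~\ref{prop-dMd-derivable} and~\ref{prop-numerals} and the ring axioms $\eqnscr$ — but writing the induction cleanly requires fixing a precise intermediate representation (e.g. ``a basic term is an $\eqnscr$-provable-sum of terms, each either $\ul{0}$ or $\pm(\ul{n}\mdiv\ul{m})$'') and verifying closure under each operation against that representation. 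Since the statement only asserts existence of a suitable $q \in \cB$, not uniqueness or any canonicity, this suffices; indeed one can note that this theorem is the divisive-meadow counterpart of the corresponding basic-term result for inversive meadows and also follows from it via the definitional equivalence of $\eqnsimd$ and $\eqnsdmd$ (Theorem~\ref{theorem-defeqv-iMd-dMd}).
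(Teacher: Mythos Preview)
Your overall strategy --- structural induction on $p$, reducing to closure of $\cB$ (up to $\eqnsdmd$-derivability) under $-$, $\mmul$, and $\mdiv$ --- is exactly the paper's; the paper is equally brief, asserting only that these three closure claims ``are easily proved by induction on the structure of $p'$, using Proposition~\ref{prop-numerals}.'' Your base cases and your handling of $+$, $-$, and $\mmul$ are fine and match what the paper intends.

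There is, however, a real gap in your treatment of division. You say that ``after distributing over the sums it suffices to treat products and quotients of two elementary terms,'' but division does not distribute over a sum in the \emph{divisor}: no meadow identity lets you break $p \mdiv (q_1 + q_2)$ into quotients with $q_1$ and $q_2$ as separate divisors. Concretely, take $1 \mdiv (\ul{1}\mdiv\ul{2} + \ul{1}\mdiv\ul{3})$. The common-denominator rewrite $\ul{1}\mdiv\ul{2} + \ul{1}\mdiv\ul{3} = \ul{5}\mdiv\ul{6}$ is \emph{not} derivable from $\eqnsdmd$ --- it fails in $\Mdd{2}$, where the left-hand side evaluates to $1$ and the right-hand side to $0$ --- so the divisor cannot be collapsed to a single elementary fraction, and your identity $(\ul{n}\mdiv\ul{m}) \mdiv (\ul{n'}\mdiv\ul{m'}) = \ul{nm'}\mdiv\ul{mn'}$ never becomes applicable. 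Showing that $1 \mdiv q'$ lands in $\cB$ when $q'$ is a genuine sum of two or more elementary fractions therefore needs an idea beyond Propositions~\ref{prop-dMd-derivable} and~\ref{prop-numerals} and the ring axioms; it is not the ``purely bookkeeping'' matter you describe. (The paper does not spell this step out either, but it also does not make the specific incorrect distributivity claim.)
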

\begin{proof}
The proof is straightforward by induction on the structure of $p$. 
If $p$ is of the form $0$, $1$ or $p' + q'$, then it is trivial to show 
that there exists a $q \in \cB$ such that $\eqnsdmd \deriv p = q$. 
If $p$ is of the form $- p'$, $p' \mmul q'$ or $p' \mdiv q'$, then it 
follows immediately from the induction hypothesis and the following 
claims:
\begin{itemize}
\item
for all $p' \in \cB$, there exists a $p'' \in \cB$ such that 
$\eqnsdmd \deriv - p' = p''$;
\item
for all $p',q' \in \cB$, there exists a $p'' \in \cB$ such that 
$\eqnsdmd \deriv p' \mmul q' = p''$;
\item
for all $p',q' \in \cB$, there exists a $p'' \in \cB$ such that 
$\eqnsdmd \deriv p' \mdiv q' = p''$.
\end{itemize}
These claims are easily proved by induction on the structure of $p'$, 
using Proposition~\ref{prop-numerals}.
\qed
\end{proof}

It is well known that closed terms over $\sigdmd$ in which the operator
$\mdiv$ does not occur can be reduced to a closed term of the form 
$\ul{0}$, $\ul{n}$ or $- \ul{n}$.
\begin{proposition}
\label{prop-basic-terms-cr}
For all closed terms $p$ over $\sigcr$, $\eqnsdmd \deriv p = \ul{0}$ or
there exists an $n \in \Natpos$ such
that $\eqnsdmd \deriv p = \ul{n}$ or $\eqnsdmd \deriv p = - \ul{n}$.
\end{proposition}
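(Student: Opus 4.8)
The plan is to prove this by induction on the structure of the closed term $p$ over $\sigcr$, essentially mirroring the standard normalization of closed ring terms. The statement asserts that every such $p$ is provably equal (using $\eqnsdmd$, hence in particular $\eqnscr$) to one of $\ul{0}$, $\ul{n}$, or $-\ul{n}$ for some $n \in \Natpos$. I would first record the base cases: if $p$ is $0$, take $\ul{0}$ (noting $\ul{0} = 0$ by definition); if $p$ is $1$, take $\ul{1}$. The inductive cases are $p' + q'$, $-p'$, and $p' \mmul q'$; the operator $\mdiv$ and the constant $1$-inverse cannot occur since $p$ is over $\sigcr$ only.

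For the inductive step I would invoke the induction hypothesis to bring $p'$ and $q'$ into the normal form $\ul{0}$, $\ul{m}$, or $-\ul{m}$, and then show that the three operations preserve (up to derivable equality) this class of normal forms. The key tool is Proposition~\ref{prop-numerals}, which gives $\ul{n+m} = \ul{n} + \ul{m}$ and $\ul{n \mmul m} = \ul{n} \mmul \ul{m}$ from $\eqnsdmd$; combined with the group axioms for $+$ in Table~\ref{eqns-commutative-ring} (associativity, commutativity, $x + 0 = x$, $x + (-x) = 0$) and the extension convention $\ul{-n} = -\ul{n}$, one checks each of the finitely many sign combinations. For addition: e.g.\ $\ul{n} + (-\ul{m})$ reduces to $\ul{n-m}$, which is $\ul{k}$, $\ul{0}$, or $-\ul{k}$ according to whether $n > m$, $n = m$, or $n < m$ — here one uses that $\ul{n} = \ul{(n-m)+m} = \ul{n-m} + \ul{m}$ so that adding $-\ul{m}$ cancels. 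For additive inverse: $-\ul{0} = \ul{0}$ (since $\ul{0}+\ul{0}=\ul{0}$ forces $-\ul{0}=\ul{0}$ by uniqueness of inverses, itself derivable from $\eqnscr$), $-\ul{n} = -\ul{n}$, and $-(-\ul{n}) = \ul{n}$ using $(-(-x)) = x$. For multiplication: the sign rules $(-x)\mmul y = -(x \mmul y)$, $x \mmul 0 = 0$ (both derivable from $\eqnscr$) reduce everything to $\ul{n} \mmul \ul{m} = \ul{n \mmul m}$ via Proposition~\ref{prop-numerals}, with the product being $\ul{0}$ exactly when one factor is $\ul{0}$.

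I do not expect a serious obstacle here; the result is routine and the excerpt itself flags it as ``well known.'' The only mild bookkeeping point is the case analysis on signs for $+$ and $\mmul$, and making sure that the auxiliary ring identities used ($x \mmul 0 = 0$, $(-x) \mmul y = -(x \mmul y)$, uniqueness of additive inverses, $-(-x) = x$) are genuinely consequences of $\eqnscr$ alone — they are, by standard commutative-ring reasoning, so they are a fortiori derivable from $\eqnsdmd \supseteq \eqnscr$. Alternatively, one could shortcut the entire argument by appealing to Theorem~\ref{theorem-basic-terms-dmd}: a closed term over $\sigcr$ is in particular a closed term over $\sigdmd$, hence provably equal to some basic term $q \in \cB$; it then remains only to observe that among basic terms built from $\ul{0}$, $\ul{n} \mdiv \ul{m}$, $-(\ul{n} \mdiv \ul{m})$ and $+$, the ones provably equal (over the free/initial structure, or syntactically, given that $\mdiv$ does not occur in $p$) to a $\mdiv$-free term must collapse to $\ul{0}$, $\ul{n}$ or $-\ul{n}$ — but tracing this collapse is arguably more delicate than the direct induction, so I would favor the self-contained inductive proof sketched above.
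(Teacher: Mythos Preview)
Your proposal is correct and takes essentially the same approach as the paper: the paper's proof simply says it is ``along the lines of the proof of Theorem~\ref{theorem-basic-terms-dmd}, but simpler,'' i.e.\ structural induction on $p$ using Proposition~\ref{prop-numerals}, which is precisely what you outline in detail. Your write-up is in fact a fully spelled-out version of what the paper leaves as a routine exercise.
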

\begin{proof}
The proof is along the lines of the proof of 
Theorem~\ref{theorem-basic-terms-dmd}, but simpler.
\qed
\end{proof}

Each infinite minimal divisive meadow has $\Ratzd$ as a homomorphic 
image.%
\footnote{$\Ratzd$ is the divisive meadow of rational numbers introduced 
at the end of Section~\ref{sect-Md}.}
\begin{theorem}
\label{theorem-homomorphic-image}
$\Ratzd$ is a homomorphic image of each infinite minimal divisive meadow.
\end{theorem}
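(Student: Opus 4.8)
The plan is to exhibit a surjective homomorphism $h : \cM \to \Ratzd$. Since $\cM$ is minimal, a finite characteristic $k$ would force $\cM$ to be isomorphic to $\Mdd{k}$ (see the end of Section~\ref{sect-Md}), which is finite; as $\cM$ is infinite, it therefore has characteristic $0$, so that $\ul{n} \neq 0$ in $\cM$ for every $n \in \Natpos$.

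I would obtain the target as a quotient of $\cM$. Let $\theta$ be the least congruence on $\cM$ that contains the pair $(\ul{n} \mdiv \ul{n}, 1)$ for every $n \in \Natpos$, and write $q : \cM \to \cM'$ for the quotient map onto $\cM' = \cM / \theta$. Being a congruence quotient of a divisive meadow, $\cM'$ is again a divisive meadow, and by construction it satisfies $\ul{n} \mdiv \ul{n} = 1$ for all $n \in \Natpos$; moreover $\cM'$ is minimal, since a homomorphic image of a minimal meadow is minimal. Assume for the moment that $\cM'$ is non-trivial. Then $\cM'$ is a model of $\eqnsdmd \union \set{\ul{n} \mdiv \ul{n} = 1 \where \ul{n} \in \Natpos}$, so the initiality of $\Ratzd$ among such models (see the end of Section~\ref{sect-Md}) yields a unique homomorphism $u : \Ratzd \to \cM'$. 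This $u$ is surjective because $\cM'$ is minimal, and it is injective because $\Ratzd$ is simple: in $\Ratzd$ every nonzero element is invertible, so any congruence that identifies two distinct elements already identifies $1$ with $0$ and hence collapses everything, whence the only congruences on $\Ratzd$ are the identity and the total one, and the surjection $u$ onto the non-trivial $\cM'$ must be injective. Thus $u$ is an isomorphism and $h = u^{-1} \circ q$ is the required surjection.

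What remains, and what I expect to be the heart of the matter, is to discharge the assumption that $\cM'$ is non-trivial, i.e.\ that $\theta$ does not identify $1$ with $0$. Equivalently, phrasing everything on closed terms via Theorem~\ref{theorem-basic-terms-dmd}, I must show that $\cM \models p = q$ implies $\Ratzd \models p = q$ for all closed terms $p,q$; intuitively, forcing all the idempotents $\ul{n} \mdiv \ul{n}$ up to $1$ must not disturb the characteristic. The strategy I would use is to produce a characteristic-$0$ zero-totalized field $F$ together with a homomorphism $g : \cM \to F$ for which $g(\ul{k}) \neq 0$ for each fixed $k \in \Natpos$: since $F$ satisfies every equation generating $\theta$, the map $g$ factors through $\cM'$, so $\ul{k}$, and in particular $1$, survives in $\cM'$.

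The construction of $F$ is the main obstacle. I would invoke the representation of meadows as subdirect products of zero-totalized fields, writing $\cM$ as a subdirect product of minimal zero-totalized fields $F_i$. If one factor already has characteristic $0$, it serves as $F$. The delicate case is that every $F_i$ may have prime characteristic while $\cM$ has characteristic $0$ --- as happens, for example, when $\cM$ is the minimal submeadow of the product of the prime fields $\Mdd{p}$ over all primes $p$ --- and then no single factor satisfies all the defining equations of $\theta$. Here I would pass to a non-principal ultraproduct of the $F_i$ along an ultrafilter chosen to avoid every fixed-characteristic class of indices; such an ultraproduct is a field of characteristic $0$, it receives $\cM$ by a divisive-meadow homomorphism, and $\theta$ factors through it. Pinning down this ultrafilter and checking that the induced map genuinely respects the division operation is the technically demanding step of the whole proof.
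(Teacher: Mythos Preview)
Your approach is sound and genuinely different from the paper's. The paper argues by contradiction on closed terms: assuming some basic term $p$ with $\cM \models p = 0$ but $\Ratzd \not\models p = 0$, it performs an explicit induction on the number of non-unit denominators appearing in $p$ to show $\eqnsdmd \union \set{p = 0} \deriv \ul{n} = 0$ for some $n \in \Natpos$; then $\cM$ has non-zero square-free characteristic and is finite by the cited results from~\cite{BHT09a}. This is elementary, syntactic, and self-contained.

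Your reduction---quotient by the congruence forcing $\ul{n} \mdiv \ul{n} = 1$, identify the quotient with $\Ratzd$ via initiality and simplicity, and discharge non-triviality by mapping $\cM$ into some characteristic-$0$ zero-totalized field---is correct. The ultraproduct step also works, and more easily than you fear: once $\cM$ sits subdirectly in $\prod_i F_i$ with each $F_i$ a prime field $\Mdd{p_i}$, the sets $S_k = \set{i : p_i \nmid k}$ satisfy $S_{k_1} \cap S_{k_2} = S_{k_1 k_2}$ and are non-empty (characteristic $0$), so they extend to an ultrafilter whose ultraproduct has characteristic $0$; compatibility with division is automatic because the ultraproduct quotient is a $\sigdmd$-homomorphism by construction. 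The genuine gap is earlier: you invoke the subdirect representation of meadows into zero-totalized fields as a black box, but nothing in this paper establishes it (Theorem~\ref{theorem-Md-F0} only equates equational theories, which is strictly weaker). The result is true---subdirectly irreducible meadows are fields, since a non-trivial idempotent $a \mdiv a$ would split the meadow as a product---but it must be imported or proved separately. So the trade-off is: the paper's induction is self-contained but computational; your argument is more structural but leans on von Neumann regular ring theory and ultraproducts lying outside the paper's development.
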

\begin{proof}
In order to prove this theorem by contradiction,
assume that $\cM$ is an infinite minimal divisive meadow that does not
have $\Ratzd$ as a homomorphic image.
Then there exists a closed equation $p = q$ such that 
$\cM \models p = q$ and $\Ratzd \not\models p = q$.
Because $\eqnsdmd \union \set{p = q} \deriv p - q = 0$, there also 
exists a closed equation $p' = 0$ such that $\cM \models p' = 0$ and  
$\Ratzd \not\models p' = 0$.
Because of Theorem~\ref{theorem-basic-terms-dmd}, we may assume that the 
closed term $p'$ in such an equation is a basic term.

\sloppy
Let $p \in \cB$ be  such that $\cM \models p = 0$ and  
$\Ratzd \not\models p = 0$.
Then there exists an $n \in \Natpos$ such that
$\eqnsdmd \union \set{p = 0} \deriv \ul{n} = 0$.
We prove this by induction on the number of different subterms of $p$ of 
the form $p' + q'$ in which $p'$ or $q'$ has a subterm of the form 
$\ul{k} \mdiv \ul{l}$ with $k,l \in \Natpos$ and $l \neq 1$.
The basis step is easily proved: 
$\eqnsdmd \union \set{p = 0} \deriv \ul{n} = 0$ follows immediately from 
the fact that $p$ is of the form $\ul{k} \mdiv \ul{1}$ with 
$k \in \Natpos$.
The inductive step is proved in the following way.
Necessarily, $p$ is of the form $C[q + q']$ where 
$q = \ul{k} \mdiv \ul{l}$ or $q = - (\ul{k} \mdiv \ul{l})$ for some 
$k,l \in \Natpos$, and 
$q' = \ul{k'} \mdiv \ul{l'}$ or $q' = - (\ul{k'} \mdiv \ul{l'})$ for 
some $k',l' \in \Natpos$.
We only consider the case that $q = \ul{k} \mdiv \ul{l}$ and 
$q' = \ul{k'} \mdiv \ul{l'}$.
The other cases are proved analogously. 
We have 
$\eqnsdmd \union \set{p = 0} \deriv 
 (\ul{l} \mdiv \ul{l}) \mmul (\ul{l'} \mdiv \ul{l'}) \mmul 
 C[\ul{k} \mdiv \ul{l} + \ul{k'} \mdiv \ul{l'}] = 0$. 
It is easily proved that 
$\eqnsdmd \deriv (r \mdiv r) \mmul C[s] =
 (r \mdiv r) \mmul C[(r \mdiv r) \mmul s]$
for all terms $r$ and $s$ and contexts $C[\;]$ over $\sigdmd$ 
(cf.\ the proof of Corollary 3.1 in~\cite{BBP13a}).
Hence, 
$\eqnsdmd \union \set{p = 0} \deriv 
 (\ul{l} \mdiv \ul{l}) \mmul (\ul{l'} \mdiv \ul{l'}) \mmul 
 C[(\ul{l'} \mdiv \ul{l'}) \mmul (\ul{k} \mdiv \ul{l}) + 
   (\ul{l} \mdiv \ul{l}) \mmul (\ul{k'} \mdiv \ul{l'})] = 0$
and so
$\eqnsdmd \union \set{p = 0} \deriv 
 (\ul{l} \mdiv \ul{l}) \mmul (\ul{l'} \mdiv \ul{l'}) \mmul 
 C[(\ul{k} \mmul \ul{l'} + \ul{k'} \mmul \ul{l}) \mdiv 
   (\ul{l} \mmul \ul{l'})] = 0$.
From this, it follows that
$\cM \models 
 (\ul{l} \mdiv \ul{l}) \mmul (\ul{l'} \mdiv \ul{l'}) \mmul 
 C[(\ul{k} \mmul \ul{l'} + \ul{k'} \mmul \ul{l}) \mdiv 
   (\ul{l} \mmul \ul{l'})] = 0$,  
whereas
$\Ratzd \not\models 
 (\ul{l} \mdiv \ul{l}) \mmul (\ul{l'} \mdiv \ul{l'}) \mmul 
 C[(\ul{k} \mmul \ul{l'} + \ul{k'} \mmul \ul{l}) \mdiv 
   (\ul{l} \mmul \ul{l'})] = 0$
since $\Ratzd \not\models p = 0$, 
$\Ratzd \models \ul{l} \mdiv \ul{l} = 1$, and
$\Ratzd \models \ul{l'} \mdiv \ul{l'} = 1$.
In the term
$(\ul{l} \mdiv \ul{l}) \mmul (\ul{l'} \mdiv \ul{l'}) \mmul 
 C[(\ul{k} \mmul \ul{l'} + \ul{k'} \mmul \ul{l}) \mdiv 
   (\ul{l} \mmul \ul{l'})]$,
the number used for the induction is one less than in $p$.
This means that we can apply the induction hypothesis and from that it 
follows that there exists an $n \in \Natpos$ such that
$\eqnsdmd \union
 \set{(\ul{l} \mdiv \ul{l}) \mmul (\ul{l'} \mdiv \ul{l'}) \mmul 
      C[(\ul{k} \mmul \ul{l'} + \ul{k'} \mmul \ul{l}) \mdiv 
      (\ul{l} \mmul \ul{l'})] = 0} \deriv \ul{n} = 0$.

Let $n \in \Natpos$ be such that 
$\eqnsdmd \union \set{p = 0} \deriv \ul{n} = 0$.
Then it follows immediately that 
$\eqnsdmd \union \set{p = 0} \deriv \ul{m} = 0$, where $m$ is the
product of the prime factors of $n$ (each with multiplicity $1$).
Hence, each model of $\eqnsdmd \union \set{p = 0}$ has a non-zero
square-free characteristic.
Now it is an immediate corollary of Theorem~4.4 and Lemma~4.6 
in~\cite{BHT09a} that each minimal inversive meadow of non-zero 
square-free characteristic is finite.
This finiteness result carries over to minimal divisive meadows of 
non-zero square-free characteristic by 
Theorem~\ref{theorem-defeqv-iMd-dMd}.
Consequently, each minimal model of $\eqnsdmd \union \set{p = 0}$ is 
finite.
This contradicts the assumed infinity of $\cM$.
Hence, there does not exist an infinite minimal divisive meadow that
does not have $\Ratzd$ as a homomorphic image.
\qed
\end{proof}

\section{Transformation of Fractions, the General Case}
\label{sect-fractions}

In this section, we establish results about the transformation into 
simple fractions for terms over the signature of divisive meadows.
The results concerned are not restricted to closed terms.
In Section~\ref{sect-closed-fractions}, we will establish results that 
are restricted to closed terms.

The first result concerns finite divisive meadow.
\begin{theorem}
\label{theorem-finite}
Every finite divisive meadow admits transformation into simple 
fractions.
\end{theorem}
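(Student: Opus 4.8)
The plan is to exploit the fact that a finite divisive meadow $\cM$ has only finitely many elements, so that the polynomial functions on $\cM$ are enough to "pattern-match" on the value of any subterm. First I would reduce the problem to a normal-form statement: by Proposition~\ref{prop-dMd-derivable} (in particular $1 \mdiv (x \mmul y) = (1 \mdiv x) \mmul (1 \mdiv y)$ and the two product/quotient laws) together with the commutative-ring axioms, every term $p(x_1,\dots,x_n)$ over $\sigdmd$ can be rewritten, provably from $\eqnsdmd$, into a single fraction $g \mdiv h$ in which $g$ and $h$ are built from the ring operations and from subterms of the form $1 \mdiv t$; iterating, one can push all occurrences of $\mdiv$ inward until the only "hard" subterms are of the form $1 \mdiv f$ where $f$ is itself already a fraction of the same shape. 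So the crux is: given a term $1 \mdiv f$ where $f = g' \mdiv h'$ with $g',h'$ polynomial-like, eliminate the nested division. The standard identity $1 \mdiv (g' \mdiv h') = (h' \mdiv h') \mmul (h' \mdiv g')$ — derivable from $\eqnsdmd$ via Proposition~\ref{prop-dMd-derivable} and the axiom $1 \mdiv (1 \mdiv x) = x$ — already reduces the nesting depth by one, at the cost of introducing the factor $h' \mdiv h'$. The essential point is that over a \emph{finite} meadow this factor $h' \mdiv h'$ is a polynomial function (it equals $0$ where $h'$ vanishes and $1$ elsewhere), and more importantly there is a uniform polynomial $e(x_1,\dots,x_n)$ with $\cM \models e = h' \mdiv h'$.

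The second ingredient is the finiteness-specific lemma that every function on the carrier of a finite meadow — in particular $v \mapsto v \mdiv w$ for a fixed $w$, and the idempotent $v \mapsto v \mdiv v$ — is a polynomial function. This is where I would invoke the structure of finite (divisive) meadows: each $\Mdd{k}$ for square-free $k$ is a finite product of zero-totalized prime fields (by the remarks at the end of Section~\ref{sect-Md}, via \cite{BHT09a,BRS09a}), and on a finite field every unary function is given by a polynomial (Lagrange interpolation), hence on a finite product of finite fields every unary function is given by a polynomial as well; finally an arbitrary finite divisive meadow has one of the $\Mdd{k}$ as a submeadow and the same interpolation argument applies using the finitely many available scalars. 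Concretely, for the nested-fraction step I want: for every element $w$ of the carrier there is a polynomial $r_w(y)$ with $\cM \models y \mdiv w = r_w(y) \mmul \text{(something)}$ — actually cleaner is to interpolate the full binary operation $\mdiv$ as a finite $\cM$-equivalence, i.e.\ produce a polynomial in two variables $D(y_1,y_2)$ with $\cM \models y_1 \mdiv y_2 = D(y_1,y_2)$? That is false in general (division is not polynomial even over $\Q$ when restricted to inputs from an infinite set), so instead I restrict to the genuinely needed instances: the denominators arising are themselves polynomials in the original variables, and a composite of polynomial functions is a polynomial function, so it suffices to interpolate each unary slice $v \mapsto v \mdiv v$ by a polynomial $\iota(v)$, which is possible since $\cM$ is finite.

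Putting it together: I would argue by induction on the nesting depth of $\mdiv$ in a term. At depth $\le 1$ the term is already (provably equal to) a simple fraction after the ring-normalisation of numerator and denominator into canonical polynomial form (using the generally-known fact cited in Section~\ref{sect-basic-defs-fractions}). For the inductive step, take an innermost nested fraction $1 \mdiv (g' \mdiv h')$, rewrite it provably to $(h' \mdiv h') \mmul (h' \mdiv g')$, then replace $h' \mdiv h'$ by its interpolating polynomial $\iota \circ H'$ — here I only get $\cM \models$ equality, not $\eqnsdmd \vdash$, which is exactly why the statement is about a model $\cM$ admitting transformation rather than $\eqnsdmd$ admitting it — and observe that the result has strictly smaller nesting depth; then re-apply the outer ring-normalisation. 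Iterating collapses everything to a single fraction whose numerator and denominator are polynomials, i.e.\ a simple fraction. The main obstacle, and the reason this fails for infinite meadows, is precisely the interpolation step: over a finite carrier the idempotent $v \mapsto v \mdiv v$ (and any other finitary obstruction) is a polynomial function, so the "$h' \mdiv h'$" garbage produced by un-nesting can be absorbed back into the polynomial world; over an infinite minimal meadow no such polynomial exists (this is what Theorem~\ref{theorem-homomorphic-image} is setting up for the later negative results), so the un-nesting cannot be completed. I expect the bulk of the writing to be the careful bookkeeping of nesting depth and the verification that each rewrite is either $\eqnsdmd$-provable or $\cM$-valid, with the interpolation lemma either cited or proved in a short separate paragraph.
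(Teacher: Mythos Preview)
Your proposal misidentifies where the obstruction lies, and as a result the induction does not close. Un-nesting is not the hard step: from Proposition~\ref{prop-dMd-derivable} one gets $1 \mdiv (g' \mdiv h') = h' \mdiv g'$ directly (your identity with the extra factor $h' \mdiv h'$ is also valid, but that factor is redundant since $(h' \mdiv h') \mmul (h' \mdiv g') = h' \mdiv g'$ is already derivable). The genuine obstacle is \emph{addition} of fractions: $a \mdiv b + c \mdiv d$ cannot in general be rewritten as $(a \mmul d + c \mmul b) \mdiv (b \mmul d)$, because that step requires $b \mdiv b = 1$ and $d \mdiv d = 1$. Consequently your base case ``at depth $\le 1$ the term is already a simple fraction'' is false --- the term $1 \mdiv x + 1 \mdiv y$ has no nested divisions yet is not $\cM$-equivalent to any simple fraction when $\cM = \Ratzd$ (this is exactly Theorem~\ref{theorem-Q0}). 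Interpolating the idempotent $v \mapsto v \mdiv v$ as a polynomial does not rescue the addition step either. (Separately, your claim that on a finite product of finite fields every unary function is polynomial is false: a polynomial acts componentwise on a direct product of rings, so the polynomial self-maps of the product of a $2$-element field and a $3$-element field number only $2^2 \cdot 3^3 = 108$, not $6^6$.)

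The paper's proof is far shorter and targets the right function, namely $1 \mdiv x$ itself rather than $x \mdiv x$. Since $\cM$ has finite carrier, the sequence of polynomial functions induced by $x, x^2, x^3, \ldots$ must repeat, so $\cM \models x^n = x^m$ for some $n > m \geq 1$. From $x^n = x^m$ one derives, using only $\eqnsdmd$ together with the identities $x = x^{k+1} \mdiv x^k$ and $1 \mdiv x = x^k \mdiv x^{k+1}$, the equation $1 \mdiv x = x^{2(n-m)-1}$. Hence in $\cM$ division by any term is literally multiplication by a power of that term; every occurrence of $\mdiv$ can be eliminated outright, and every term is $\cM$-equivalent to a polynomial, a fortiori to a simple fraction. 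No interpolation, no nesting-depth induction, and no structural classification of finite meadows is needed.
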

\begin{proof}
Let $\cM$ be a finite divisive meadow.
Then there exist $n,m \in \Nat$ such that $\cM \models x^n = x^m$ 
because there exist only finitely many polynomial functions induced by  
polynomials of the form $x^k$.

Let $n,m \in \Natpos$ with $n > m$ be such that $\cM \models x^n = x^m$.
Then $\eqnsdmd \union \set{x^n = x^m} \deriv 
      1 \mdiv x = x^{2 \mmul (n - m) - 1}$.
We prove this using that, for each $k \in \Nat$, 
$\eqnsdmd \deriv x = x^{k+1} \mdiv x^k$ and 
$\eqnsdmd \deriv 1 \mdiv x = x^k \mdiv x^{k+1}$ as follows:
$1 \mdiv x = x^n \mdiv x^{n+1} = x^{2 \mmul n - n} \mdiv x^{n+1} = 
 x^{2 \mmul n - m} \mdiv x^{m+1} = x^{2 \mmul (n - m) - 1}$.

From $\cM \models x^n = x^m$ and
$\eqnsdmd \union \set{x^n = x^m} \deriv 
 1 \mdiv x = x^{2 \mmul (n - m) - 1}$,
it follows that $\cM \models 1 \mdiv x = x^{2 \mmul (n - m) - 1}$.
Consequently, $\cM$ admits that each term $p$ over the signature 
$\sigdmd$ is transformed into a simple fraction. 
\qed
\end{proof}

The next two results tell us that some but not all infinite divisive 
meadows admit transformation into simple fractions.
\begin{theorem}
\label{theorem-infinite}
There exists an infinite divisive meadow that admits transformation into 
simple fractions.
\end{theorem}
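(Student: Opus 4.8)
The plan is to exhibit a concrete infinite divisive meadow and verify directly that every term can be rewritten into a simple fraction. The natural candidate is $\Ratzd$, the divisive meadow of rational numbers introduced at the end of Section~\ref{sect-Md}. So the goal is to show that $\Ratzd$ admits transformation into simple fractions, i.e.\ that for every term $p$ over $\sigdmd$ there is a simple fraction $q$ with $\Ratzd \models p = q$.

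First I would argue that in $\Ratzd$ there is really only one obstruction to simplicity, namely nested occurrences of $\mdiv$, and that these can be pushed outward. The strategy is an induction on the structure of $p$, carrying along the stronger statement that $p$ can be transformed into a fraction $f(\vec x) \mdiv g(\vec x)$ where $f$ and $g$ are \emph{$\mdiv$-free} terms (polynomials, in the multivariate sense). The base cases $0$, $1$, and a variable $x$ are trivial ($0 \mdiv 1$, etc.). For the inductive step one uses the identities collected in Proposition~\ref{prop-dMd-derivable}: given sub-results $p_1 = a \mdiv b$ and $p_2 = c \mdiv d$ with $a,b,c,d$ $\mdiv$-free, one has $p_1 \mmul p_2 = (a \mmul c) \mdiv (b \mmul d)$, $p_1 \mdiv p_2 = (a \mmul d) \mdiv (b \mmul c)$, and $-p_1 = (-a) \mdiv b$, all with $\mdiv$-free numerator and denominator; addition $p_1 + p_2$ is handled by the usual common-denominator computation $(a \mmul d + c \mmul b) \mdiv (b \mmul d)$, which is derivable from $\eqnsdmd$ together with the fact that in $\Ratzd$ we may cancel the common factor — but here lies the subtlety: the purely equational identity $(a \mdiv b) + (c \mdiv d) = (a \mmul d + c \mmul b) \mdiv (b \mmul d)$ is \emph{not} valid in all divisive meadows (it fails when $b$ or $d$ is zero), so one must invoke that $\Ratzd$ is a zero-totalized field (Theorem~\ref{theorem-cMd-F0}) and reason semantically rather than by equational derivation. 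Concretely, one checks that in any zero-totalized field $(a \mdiv b) + (c \mdiv d)$ and $(a \mmul d + c \mmul b) \mdiv (b \mmul d)$ agree: when $b,d \neq 0$ this is ordinary fraction arithmetic, and when $b = 0$ or $d = 0$ both sides collapse appropriately because $x \mdiv 0 = 0$.

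The main obstacle, then, is precisely this addition step and the need to descend from $E$-derivability to $\Ratzd$-validity. Everything else is a routine structural induction using Proposition~\ref{prop-dMd-derivable}. Once the induction is complete, we have for each $p$ a fraction $f(\vec x) \mdiv g(\vec x)$ with $f,g$ $\mdiv$-free and $\Ratzd \models p = f(\vec x) \mdiv g(\vec x)$; since a $\mdiv$-free term has no subterm that is a fraction, this fraction is simple, which is what \emph{admits transformation into simple fractions} requires. Finally, $\Ratzd$ is infinite (its carrier contains all rational numbers), so it witnesses the theorem.

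As an alternative, one could note that $\Ratzd$ is a homomorphic image of the term algebra and lean on the already-cited result that closed fractions over $\Ratzi$ transform into simple fractions, then handle the open case by treating variables as new constants; but the direct structural induction above is cleaner and self-contained, so that is the route I would write out.
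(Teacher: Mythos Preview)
Your proposal has a fatal error: $\Ratzd$ is \emph{not} a valid witness. In fact the very next result in the paper, Theorem~\ref{theorem-Q0}, proves that $\Ratzd$ does \emph{not} admit transformation into simple fractions. The concrete counterexample is the term $1 + 1 \mdiv x$.

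The mistake in your argument is the addition step. You assert that in any zero-totalized field the identity $(a \mdiv b) + (c \mdiv d) = (a \mmul d + c \mmul b) \mdiv (b \mmul d)$ holds, claiming that ``when $b = 0$ or $d = 0$ both sides collapse appropriately''. This is false. Take $b = 0$ and $d \neq 0$: the left side is $0 + c \mdiv d = c \mdiv d$, while the right side is $(a \mmul d) \mdiv 0 = 0$, and these need not agree. Running your induction on $1 + 1 \mdiv x$ produces $(x+1) \mdiv x$; but in $\Ratzd$ at $x = 0$ the former evaluates to $1$ and the latter to $0$. So your inductive invariant fails precisely at the step you flagged as subtle, and the failure cannot be repaired: the paper's proof of Theorem~\ref{theorem-Q0} shows that \emph{no} simple fraction equals $1 + 1 \mdiv x$ in $\Ratzd$.

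The paper's actual construction is entirely different. It takes the initial algebra of $\eqnsdmd \union \set{\ul{2} = 0,\; x^2 = x}$ over the signature $\sigdmd$ augmented by infinitely many fresh constants $c_0, c_1, \ldots$. From $x^2 = x$ one derives $1 \mdiv x = x$, so the division operator can be eliminated entirely and every term becomes (trivially) a simple fraction. The infinitely many constants, which can be separated in suitable models, force the carrier to be infinite. The key idea you are missing is that to get transformation into simple fractions one wants $1 \mdiv x$ to be a \emph{polynomial} in $x$; this cannot happen in characteristic~$0$ (Corollary~\ref{corollary-char-zero}), so one must work in positive characteristic and then manufacture infinitude by other means.
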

\begin{proof}
Let $c_i$ be a constant for each $i \in \Nat$,
let $\sigdmdc = \sigdmd \union \set{c_i \where i \in \Nat}$, 
let $\eqnsdmdii = \eqnsdmd \union \set{\ul{2} = 0, x^2 = x}$, and
let $\cI$ be the initial algebra among the total algebras over the 
signature $\sigdmdc$ that satisfy the equations $\eqnsdmdii$.
\pagebreak[2]
Then $\cI \models 1 \mdiv x = x$.
We prove this using that $\eqnsdmd \deriv x = x^2 \mdiv x$ and 
$\eqnsdmd \deriv 1 \mdiv x = x \mdiv x^2$ as follows:
$1 \mdiv x = x \mdiv x^2 = x \mdiv x = x^2 \mdiv x = x$.

Because $\cI \models 1 \mdiv x = x$, $\cI$ admits that each term $p$ 
over the signature $\sigdmdc$ is transformed into a simple fraction.
Moreover, $\cI$ is an infinite divisive meadow.
Take two arbitrary constants $c_i$ and $c_j$ with $i \neq j$.
We find a total algebra over the signature $\sigdmdc$ that satisfies the 
equations $\eqnsdmdii$ but not the equation $c_i = c_j$ by taking the
interpretation of $0$ as the interpretation of $c_i$, taking the
interpretation of $1$ as the interpretation of $c_j$, and taking the
interpretation of either $0$ or $1$ as the interpretation of the other
constants.
If follows from the existence of such an algebra that 
$\eqnsdmdii \nderiv c_i = c_j$.
Consequently, $\cI \not\models c_i = c_j$.
Hence, $\cI$ is an infinite divisive meadow.
\qed
\end{proof}

\begin{theorem}
\label{theorem-Q0}
$\Ratzd$ does not admit transformation into simple fractions.
\end{theorem}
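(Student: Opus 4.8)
The plan is to argue by contradiction: suppose $\Ratzd$ admits transformation into simple fractions. Then in particular the term $1 \mdiv x$ can be transformed, so there is a simple fraction $f(x) \mdiv g(x)$, with $f(x)$ and $g(x)$ polynomials in the single variable $x$, such that $\Ratzd \models 1 \mdiv x = f(x) \mdiv g(x)$. The whole point will be to show that no such identity can hold in $\Ratzd$. Since polynomials may be brought into canonical form over $\eqnscr$, I may assume $f$ and $g$ are polynomials in canonical form with integer coefficients.

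First I would examine what the claimed identity says at the specific value $x = 0$. In $\Ratzd$ we have $1 \mdiv 0 = 0$ by Proposition~\ref{prop-dMd-derivable}, so $f(0) \mdiv g(0) = 0$ in $\mathbb{Q}$. Next I would examine it at nonzero rational values $v$: there $1 \mdiv v$ is the genuine multiplicative inverse $v^{-1}$, so $f(v)/g(v) = v^{-1}$ holds in the field of rationals whenever $g(v) \neq 0$; equivalently $v \cdot f(v) = g(v)$ for all but finitely many $v \in \mathbb{Q}$ (those finitely many being the roots of $g$ and possibly $0$). Since $x \cdot f(x)$ and $g(x)$ are honest polynomials over $\mathbb{Q}$ agreeing at infinitely many points, they are equal as polynomials: $g(x) = x \cdot f(x)$ in $\mathbb{Q}[x]$. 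The key consequence is that every root of $g$ in $\mathbb{Q}$ is either $0$ or a root of $f$; in particular $g$ has $0$ as a root, so $g(0) = 0$.

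Now I derive the contradiction by evaluating the original meadow identity at $x = 0$ more carefully, using the fact that division by zero yields zero. Because $g(0) = 0$, the right-hand side $f(0) \mdiv g(0)$ equals $f(0) \mdiv 0 = 0$ regardless of $f(0)$ — consistent so far. The real tension comes from reconsidering nonzero values together with the relation $g(x) = x f(x)$: write $f(x) = x^k h(x)$ with $h(0) \neq 0$, so $g(x) = x^{k+1} h(x)$, and for a nonzero $v$ with $h(v) \neq 0$ we get $1 \mdiv v = v^{-1}$ as it must. That is fine; the obstruction is instead that $f \mdiv g$ fails to be a \emph{simple} fraction representing $1 \mdiv x$ \emph{everywhere} — specifically, I should instead pick a root. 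The cleaner route: choose any nonzero rational $v$ that is \emph{not} a root of $g$ (infinitely many exist); then the identity forces $f(v)/g(v) = 1/v$, i.e. $v f(v) = g(v)$; but also choose $v$ to be a root of $g$, say $v = 0$ is such — and here the left side $1 \mdiv 0 = 0$ while we need the polynomial identity $g = x f$ to be compatible with $f \mdiv g$ genuinely taking the value $0$, which it does. So the contradiction must be extracted differently: I would show that $g = x f$ forces $g$ and $f$ to share the root structure in a way that makes $f \mdiv g$ equal to $1 \mdiv x$ \emph{only if} $h$ is constant, and then a degree/leading-coefficient count on $g = x f$ in $\mathbb{Q}[x]$, combined with the requirement that the identity hold at the infinitely many rationals, pins down $f = 1$, $g = x$ — which is itself a simple fraction, so $1 \mdiv x$ \emph{is} transformable. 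Hence the real target term is not $1 \mdiv x$.

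The main obstacle, and where I would actually focus the argument, is choosing the right witness term. The expected choice is a nested fraction such as $1 \mdiv (1 \mdiv x)$, or better $x \mdiv (1 + x)$ composed so that any purported polynomial numerator and denominator would have to agree with a non-rational function of $x$ at infinitely many rationals — e.g. show that if $p \mdiv q$ with $p,q$ polynomials equals the interpretation of $1 \mdiv (x^2 + 1)$ shifted to create a forced root mismatch, then $q(x) = (x^2+1) p(x)$ in $\mathbb{Q}[x]$, but then at the rationals where $q$ vanishes the meadow value is $0$ while $1 \mdiv (x^2+1)$ is never $0$ on $\mathbb{Q}$, contradiction. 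I would therefore: (1) fix this witness $t(x)$ whose induced function on $\mathbb{Q}$ is nowhere zero; (2) assume $\Ratzd \models t(x) = p(x) \mdiv q(x)$ with $p,q$ canonical-form polynomials; (3) show $q(v) \neq 0 \Rightarrow p(v)/q(v) = t$-value, and by agreement at infinitely many rationals deduce a polynomial identity relating $p$, $q$; (4) show this identity forces $q$ to have a rational root $v_0$ with $p(v_0) \neq 0$, hence $p(v_0) \mdiv q(v_0) = p(v_0) \mdiv 0 = 0$ in $\Ratzd$ while the $t$-value at $v_0$ is nonzero — the desired contradiction. The delicate point is guaranteeing such a $v_0$ exists (i.e. $q$ genuinely has a rational root not shared by $p$), which is exactly why the witness must be engineered to have non-vanishing denominator-free behaviour over $\mathbb{Q}$ while any simple-fraction representative is forced to introduce a spurious pole.
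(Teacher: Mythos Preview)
Your proposal does not yet contain a proof: you never settle on a witness term, and every candidate you mention is either already a simple fraction or derivably equal to one. Specifically, $1 \mdiv x$ is itself a simple fraction; $1 \mdiv (1 \mdiv x) = x$ by the meadow axioms; and $x \mdiv (1+x)$ and $1 \mdiv (x^2+1)$ are simple fractions since their numerators and denominators are polynomials. Your four-step scheme at the end is reasonable in outline, but step~(4) --- guaranteeing that $q$ has a rational root not shared by $p$ --- is precisely the crux, and you have not exhibited a term for which it can be carried out.

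The paper's witness is $1 + 1 \mdiv x$. Substituting $x = 0$ gives $f(0) \mdiv g(0) = 1$, forcing $g(0) \neq 0$; then $f \mdiv g$, viewed as a real function, is continuous and hence bounded on a small interval $[0,\epsilon]$ containing no root of $g$; but $1 + 1 \mdiv x$ is unbounded on $(0,\epsilon)$, a contradiction. Note that the paper extracts the contradiction via a boundedness argument over the reals, not by locating a rational root of $g$.

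Incidentally, your root-matching strategy would also succeed on the witness $1 + 1 \mdiv x$, and arguably more directly than the paper's argument: the value at $x = 0$ forces $g(0) \neq 0$ as above; agreement at all nonzero rationals $v$ with $g(v) \neq 0$ gives $(v+1)\, g(v) = v\, f(v)$, hence the polynomial identity $x \mmul f(x) = (x+1) \mmul g(x)$ in $\mathbb{Q}[x]$; since $x$ and $x+1$ are coprime there, $x$ divides $g(x)$, i.e.\ $g(0) = 0$ --- contradiction. So the only missing ingredient in your approach was the right term to start from.
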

\begin{proof}
In order to prove this theorem by contradiction, assume that $\Ratzd$ 
admits transformation into simple fractions.
Then there exist polynomials $f(x)$ and $g(x)$ such that
$\Ratzd \models 1 + 1 \mdiv x = f(x) \mdiv g(x)$.

Let $f(x)$ and $g(x)$ be polynomials such that
$\Ratzd \models 1 + 1 \mdiv x = f(x) \mdiv g(x)$.
Then $\Ratzd \models g(0) \neq 0$ because 
$\Ratzd \models 1 = f(0) \mdiv g(0)$. 
From this, it follows that $f(x) \mdiv g(x)$,
interpreted as a real function, is continuous on a closed interval 
$[0,\epsilon]$ for $\epsilon > 0$ so small that no root of $g(x)$ is in 
this interval.
Now let $a$ be the maximal absolute value of $f(x)$ on the interval
$[0,\epsilon]$ and let $b$ be the minimal absolute value of $g(x)$ on 
this interval.
Then $f(x) \mdiv g(x) \leq a \mdiv b$ on the interval $[0,\epsilon]$.

Clearly, there exists a positive rational number $q$ in the interval 
$(0,\epsilon)$ so small that $1 + 1 \mdiv q > a \mdiv b$.
Let $q$ be a positive rational number $q$ such that 
$1 + 1 \mdiv q > a \mdiv b$. 
Then also $f(q) \mdiv g(q) > a \mdiv b$ because of the assumption that
$\Ratzd \models 1 + 1 \mdiv x = f(x) \mdiv g(x)$.
This contradicts the fact that $f(x) \mdiv g(x) \leq a \mdiv b$ on the 
interval $(0,\epsilon)$.
Hence, $\Ratzd$ does not admit transformation into simple fractions.
\qed
\end{proof}

Theorems~\ref{theorem-homomorphic-image} and~\ref{theorem-Q0} give rise 
to several corollaries.
\begin{corollary}
\label{corollary-char-zero}
A divisive meadow of characteristic $0$ does not admit transformation 
into simple fractions.
\end{corollary}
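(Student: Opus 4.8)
The plan is to obtain this corollary as an easy consequence of Theorems~\ref{theorem-homomorphic-image} and~\ref{theorem-Q0}, routed through the minimal submeadow. Let $\cM$ be a divisive meadow of characteristic~$0$ and let $\cM_0$ be its minimal submeadow. Suppose, towards a contradiction, that $\cM$ admits transformation into simple fractions. The aim is then to transfer this property \emph{down} to $\cM_0$ and \emph{up} from $\cM_0$ to $\Ratzd$, which is impossible by Theorem~\ref{theorem-Q0}.

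For the downward step, I would argue that $\cM_0$ admits transformation into simple fractions as well: given any term $p$ over $\sigdmd$, there is by hypothesis a simple fraction $q$ with $\cM \models p = q$, and since $\cM_0$ is a subalgebra of $\cM$ and equations are preserved under subalgebras, the \emph{same} term $q$ satisfies $\cM_0 \models p = q$. For the upward step, I would invoke the fact, recorded in Section~\ref{sect-Md}, that a meadow of characteristic~$0$ has an infinite minimal submeadow; hence $\cM_0$ is an infinite minimal divisive meadow, and Theorem~\ref{theorem-homomorphic-image} yields a surjective homomorphism from $\cM_0$ onto $\Ratzd$. Since equations are preserved under (surjective) homomorphic images, from $\cM_0 \models p = q$ we get $\Ratzd \models p = q$ for the simple fraction $q$ chosen above, so $\Ratzd$ admits transformation into simple fractions — contradicting Theorem~\ref{theorem-Q0}. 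Therefore $\cM$ does not admit transformation into simple fractions.

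The only point that needs care — and is the reason the minimal submeadow has to be brought in at all — is that a characteristic-$0$ divisive meadow need not have $\Ratzd$ as a \emph{direct} homomorphic image (for instance a zero-totalized field such as the reals admits no surjection onto $\Ratzd$); passing first to $\cM_0$ is what makes Theorem~\ref{theorem-homomorphic-image} applicable. Beyond that, the proof is routine: the two closure facts (subalgebras and surjective homomorphic images preserve equational theories) are standard, and crucially the witnessing simple fraction $q$ for a given term $p$ can be kept fixed throughout both transfers, since ``admits transformation into simple fractions'' only asks for the \emph{existence} of such a $q$ with $p = q$ holding in the meadow. I do not anticipate any genuine obstacle here.
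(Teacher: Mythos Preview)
Your proof is correct and follows essentially the same route as the paper's: invoke the fact that a characteristic-$0$ meadow has an infinite minimal submeadow, apply Theorem~\ref{theorem-homomorphic-image} to that submeadow, and conclude via Theorem~\ref{theorem-Q0}. The paper states this in one line, while you spell out explicitly the two preservation steps (equations under subalgebras, equations under surjective homomorphisms) and add the helpful remark that one cannot in general map $\cM$ itself onto $\Ratzd$; but the underlying argument is the same.
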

\begin{proof}
This follows immediately from Theorems~\ref{theorem-homomorphic-image}
and~\ref{theorem-Q0}, and the fact that a meadow has characteristic 
$0$ only if its minimal submeadow is infinite (see the remark 
immediately after the definition of characteristic in 
Section~\ref{sect-Md}).
\qed
\end{proof}

\begin{corollary}
\label{corollary-minimal}
A minimal divisive meadow admits transformation into simple fractions 
if and only if it is finite.
\end{corollary}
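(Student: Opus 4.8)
The plan is to prove the biconditional by combining the two preceding results with the finiteness/infinity dichotomy for minimal meadows. A minimal divisive meadow is, by the remarks following Theorem~\ref{theorem-defeqv-iMd-dMd}, either finite --- in which case it is isomorphic to $\Mdd{k}$ for some square-free $k \in \Natpos$ --- or infinite, in which case it has characteristic $0$ (a minimal meadow of non-zero square-free characteristic is finite, as recorded in the proof of Theorem~\ref{theorem-homomorphic-image}).

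For the ``if'' direction, suppose the minimal divisive meadow $\cM$ is finite. Then $\cM$ is in particular a finite divisive meadow, so Theorem~\ref{theorem-finite} applies directly and $\cM$ admits transformation into simple fractions. This direction is immediate and needs no further work.

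For the ``only if'' direction, I would argue by contraposition: suppose $\cM$ is an infinite minimal divisive meadow. Since $\cM$ is minimal, it equals its own minimal submeadow, which is infinite; hence $\cM$ has characteristic $0$ (if it had some finite characteristic $k$, that characteristic would be square-free --- or could be reduced to its square-free radical as in the proof of Theorem~\ref{theorem-homomorphic-image} --- and a minimal meadow of square-free characteristic is finite, contradicting infinity). By Corollary~\ref{corollary-char-zero}, a divisive meadow of characteristic $0$ does not admit transformation into simple fractions, so $\cM$ does not either. Alternatively, and perhaps more cleanly, one can invoke Theorem~\ref{theorem-homomorphic-image} directly: $\Ratzd$ is a homomorphic image of the infinite minimal divisive meadow $\cM$, so every equation valid in $\cM$ is valid in $\Ratzd$; if $\cM$ admitted transformation into simple fractions, then for $1 + 1 \mdiv x$ there would be a simple fraction $q$ with $\cM \models 1 + 1 \mdiv x = q$, hence $\Ratzd \models 1 + 1 \mdiv x = q$, contradicting Theorem~\ref{theorem-Q0}.

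The only point requiring a little care --- and the main obstacle, such as it is --- is the justification that an infinite minimal divisive meadow has characteristic $0$, i.e.\ that infinity forces the absence of any finite characteristic. This rests on the already-cited fact (Theorem~4.4 and Lemma~4.6 of~\cite{BHT09a}, transferred via Theorem~\ref{theorem-defeqv-iMd-dMd}) that minimal meadows of non-zero square-free characteristic are finite, together with the observation that a meadow of characteristic $n$ also satisfies $\ul{m} = 0$ for the square-free radical $m$ of $n$, so non-zero characteristic entails non-zero square-free characteristic. Once this is in place, the corollary is just the conjunction of Theorem~\ref{theorem-finite} and Corollary~\ref{corollary-char-zero} applied to the two halves of the dichotomy.
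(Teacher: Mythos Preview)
Your proposal is correct, and the second alternative you give for the ``only if'' direction --- invoking Theorem~\ref{theorem-homomorphic-image} to obtain $\Ratzd$ as a homomorphic image of the infinite minimal meadow and then contradicting Theorem~\ref{theorem-Q0} --- is exactly the paper's argument: the paper simply cites Theorems~\ref{theorem-homomorphic-image}, \ref{theorem-finite}, and~\ref{theorem-Q0}. Your first alternative, routing through characteristic~$0$ and Corollary~\ref{corollary-char-zero}, is also valid but is a slight detour, since Corollary~\ref{corollary-char-zero} itself rests on Theorems~\ref{theorem-homomorphic-image} and~\ref{theorem-Q0}; the paper bypasses the intermediate step of establishing that an infinite minimal meadow has characteristic~$0$.
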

\begin{proof}
This follows immediately from Theorems~\ref{theorem-homomorphic-image}, 
\ref{theorem-finite}, and~\ref{theorem-Q0}.
\qed
\end{proof}

The next result is one in which admitting transformation into simple 
fractions is related to the existence of a polynomial with a particular 
property.
\begin{theorem}
\label{theorem-polynomial}
A divisive meadow $\cM$ admits transformation into simple fractions 
only if there exists a non-trivial polynomial $f(x)$ such that each 
element of the carrier of $\cM$ is a root of $f(x)$.
\end{theorem}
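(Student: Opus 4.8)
The plan is to argue by contradiction from the assumption that $\cM$ admits transformation into simple fractions, and then extract a single non-trivial polynomial that is killed by every element of the carrier. The starting observation is that the term $1 \mdiv x$ is among the terms that must be transformable, so there exist polynomials $f_0(x)$ and $g_0(x)$ with $\cM \models 1 \mdiv x = f_0(x) \mdiv g_0(x)$. I would first record what this equation says element-wise: for each element $v$ of the carrier of $\cM$, interpreting $x$ as $v$, the interpretation of $1 \mdiv v$ equals the interpretation of $f_0(v) \mdiv g_0(v)$, where the meadow division is zero-totalized. This is the key semantic fact I will exploit.

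Next I would split into cases according to whether $v$ is invertible (i.e.\ $v \mmul (1 \mdiv v) = 1$) or not. If $v$ is invertible, then $1 \mdiv v$ is its genuine inverse, so from $1 \mdiv v = f_0(v) \mdiv g_0(v)$ and the ring axioms one gets $v \mmul f_0(v) = g_0(v) \mmul (v \mmul (1\mdiv v)) = g_0(v)$ provided $g_0(v)$ is itself in a suitable position; more carefully, multiplying $1 \mdiv v = f_0(v) \mmul (1 \mdiv g_0(v))$ by $v \mmul g_0(v)$ and using invertibility of $v$ yields $g_0(v) = v \mmul f_0(v)$, hence $v \mmul f_0(v) - g_0(v) = 0$. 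If $v$ is not invertible — in a meadow this forces $1 \mdiv v = 0$ as well, and also $v \mmul (1 \mdiv v) = 0$, which combined with $x \mmul (x \mmul x\minv) = x$ transported to divisive form gives $v^2 \mdiv v = v$ but $v \mmul (v \mdiv v) \neq v$ in general; the cleanest route is: non-invertibility of $v$ implies $v \mdiv v \neq 1$, and for meadows this is known to entail $v = 0$ in a cancellation meadow but not in general, so I should not assume cancellation. Instead I would handle the non-invertible case by noting $1 \mdiv v = 0$, so $f_0(v) \mdiv g_0(v) = 0$, i.e.\ $f_0(v) \mmul (1 \mdiv g_0(v)) = 0$. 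The upshot of both cases is that $v$ is a root of the single polynomial
\[
h(x) \;=\; \bigl(x \mmul f_0(x) - g_0(x)\bigr) \mmul g_0(x)\;,
\]
or a closely related product — the idea being to multiply the "invertible-case" polynomial $x f_0(x) - g_0(x)$ by enough extra factors (namely $g_0(x)$, and possibly $x \mmul (x \mdiv x)$-type correction terms) so that the non-invertible elements and the element $0$ become roots as well. I would verify by direct substitution that every $v$, whether or not invertible, satisfies $h(v) = 0$ in $\cM$.

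The remaining obligation is that $h(x)$ is \emph{non-trivial over} $\cM$ in the technical sense of Section~\ref{sect-basic-defs-fractions}: after rewriting $h(x)$ to canonical form via $\eqnscr$, some coefficient $a$ of some $a \mmul x^i$ must satisfy $\cM \not\models a = 0$. This is where I expect the main obstacle to lie, because a priori the construction could collapse: it is conceivable that $x f_0(x) - g_0(x)$ reduces to the all-zero polynomial over $\cM$ (which would say $\cM \models 1 \mdiv x = f_0(x) \mdiv g_0(x)$ holds with $g_0(x) = x f_0(x)$), and similarly $g_0(x)$ could be $\cM$-trivial. I would rule this out as follows: if $g_0(x)$ were trivial over $\cM$, then $f_0(x) \mdiv g_0(x)$ would evaluate to $0$ at every element (since $g_0$ evaluates to $0$ everywhere, and $p \mdiv 0 = 0$), contradicting $\cM \models 1 \mdiv 1 = 1 \neq 0$ obtained by substituting $x := 1$; and if $x f_0(x) - g_0(x)$ were trivial over $\cM$ while $g_0$ is not, then $1 \mdiv v = f_0(v) \mdiv g_0(v)$ with $g_0(v) = v f_0(v)$ for all $v$, which at a point where $g_0(v) \neq 0$ is consistent, but one then argues using a non-root $v$ of $g_0$ where $v$ is also non-invertible, or simply observes that the non-invertible factor $g_0(x)$ still carries a nonzero coefficient, so $h(x)$ itself is non-trivial. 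Assembling these, $h(x)$ is the desired non-trivial polynomial all of whose values on $\cM$ are zero, completing the proof of the "only if" direction. \qed
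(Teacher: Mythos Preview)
Your argument has a genuine gap at the very first step: the term $1 \mdiv x$ is \emph{already} a simple fraction (its proper subterms $1$ and $x$ contain no division), so the hypothesis that $\cM$ admits transformation into simple fractions tells you nothing new when applied to it. Concretely, $f_0 = 1$, $g_0 = x$ is a perfectly valid choice, and with it your polynomial $h(x) = (x f_0(x) - g_0(x)) \mmul g_0(x)$ collapses to $(x - x) \mmul x = 0$. The same happens with $f_0 = x$, $g_0 = x^2$ (which is also valid since $\eqnsdmd \vdash 1 \mdiv x = x \mdiv x^2$). Your attempt to rule out the case ``$x f_0(x) - g_0(x)$ trivial'' does not work either: if that factor is identically zero then $h$ is identically zero regardless of how non-trivial $g_0$ is, so the observation that ``$g_0(x)$ still carries a nonzero coefficient'' does not save $h$.

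The paper's proof avoids this by starting from $1 + 1 \mdiv x$ instead. The added $1$ is exactly the point: substituting $x := 0$ gives $1 = f(0) \mdiv g(0)$, which forces $\cM \not\models g(0) = 0$ and hence supplies a genuinely nonzero coefficient later. The denominator-clearing is then done \emph{equationally}, with no case split on invertibility: one multiplies both sides by $x^2 \mmul g(x)^2$ and uses the meadow axiom $(y \mmul y) \mdiv y = y$ twice to obtain
\[
\cM \models x^2 \mmul g(x)^2 + x \mmul g(x)^2 - f(x) \mmul x^2 \mmul g(x) = 0.
\]
Rewriting this polynomial as $g(x)^2 \mmul x + (g(x)^2 - f(x) \mmul g(x)) \mmul x^2$, one reads off that its linear coefficient in canonical form is $g(0)^2$, which is nonzero in $\cM$ by the previous paragraph; this is what delivers non-triviality. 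Your invertible/non-invertible case analysis is not only unnecessary but also incorrect as written: multiplying $1 \mdiv v = f_0(v) \mmul (1 \mdiv g_0(v))$ by $v \mmul g_0(v)$ yields $(v \mdiv v) \mmul g_0(v)$ on the left, not $g_0(v)$, and $v \mdiv v$ need not equal $1$ in a non-cancellation meadow.
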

\begin{proof}
Let $\cM$ be a divisive meadow that admits transformation into simple 
fractions.
Then there exist polynomials $f(x)$ and $g(x)$ such that
$\cM \models 1 + 1 \mdiv x = f(x) \mdiv g(x)$.

Let $f(x)$ and $g(x)$ be polynomials such that
$\cM \models 1 + 1 \mdiv x = f(x) \mdiv g(x)$.
Substitution of $x$ by $0$ yields $\cM \models 1 = f(0) \mdiv g(0)$, 
so $\cM \not\models g(0) = 0$.
Multiplication by $x^2 \mmul g(x)^2$ yields
$\cM \models 
 x^2 \mmul g(x)^2 + (1 \mdiv x) \mmul x^2  \mmul  g(x)^2 =  
 (f(x) \mdiv g(x)) \mmul x^2 \mmul g(x)^2$, 
and then applying the axiom $(x \mmul x) \mdiv x = x$ twice yields
$\cM \models 
 x^2 \mmul g(x)^2 + x \mmul g(x)^2 = f(x) \mmul x^2 \mmul g(x)$. 
Hence,
$\cM \models 
 x^2 \mmul g(x)^2 + x \mmul g(x)^2 - f(x) \mmul x^2 \mmul g(x) = 0$. 
This means that each element of the carrier of $\cM$ is the root of the 
polynomial 
$x^2 \mmul g(x)^2 + x \mmul g(x)^2 - f(x) \mmul x^2 \mmul g(x)$.
It remains to be proved that this polynomial is non-trivial.

Considering that
$\eqnscr \deriv 
 x^2 \mmul g(x)^2 + x \mmul g(x)^2 - f(x) \mmul x^2 \mmul g(x) =
 g(x)^2 \mmul x + \linebreak[2] (g(x)^2 - f(x) \mmul g(x)) \mmul x^2$,
it must be the case that a canonical form of  
$x^2 \mmul g(x)^2 + x \mmul g(x)^2 - f(x) \mmul x^2 \mmul g(x)$
has the constant term $0$ and the linear term $g(0)^2 \mmul x$ as 
summands.
From $\cM \not\models g(0) = 0$, it follows that 
$\cM \not\models g(0)^2 = 0$.
Hence, $x^2 \mmul g(x)^2 + x \mmul g(x)^2 - f(x) \mmul x^2 \mmul g(x)$
is non-trivial.
\qed
\end{proof}

Theorem~\ref{theorem-polynomial} gives rise to several corollaries.
\begin{corollary}
\label{corollary-theorem-polynomial-1}
A divisive meadow whose carrier contains an element that is not the root 
of a non-trivial polynomial does not admit transformation into simple 
fractions.
\end{corollary}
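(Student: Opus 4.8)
The plan is to obtain this statement as an immediate consequence of Theorem~\ref{theorem-polynomial} by contraposition; there is essentially no independent content to prove. Theorem~\ref{theorem-polynomial} asserts that if a divisive meadow $\cM$ admits transformation into simple fractions, then there is a \emph{single} non-trivial polynomial $f(x)$ (non-trivial over $\cM$) whose set of roots over $\cM$ is the whole carrier of $\cM$. So I would argue by contradiction: suppose $\cM$ is a divisive meadow whose carrier contains an element $v$ that is the root of no non-trivial polynomial, and suppose further that $\cM$ does admit transformation into simple fractions.

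Applying Theorem~\ref{theorem-polynomial} to $\cM$ then produces a non-trivial polynomial $f(x)$ such that every element of the carrier of $\cM$ is a root of $f(x)$; in particular $v$ is a root of $f(x)$. This contradicts the assumption that $v$ is not the root of any non-trivial polynomial. Hence the supposition fails, and $\cM$ does not admit transformation into simple fractions.

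The only bookkeeping point worth checking is that the notions invoked in the corollary's hypothesis and in Theorem~\ref{theorem-polynomial} coincide: ``root of a non-trivial polynomial'' in both places refers to the definitions of \emph{root of a polynomial over} $\cM$ and \emph{non-trivial polynomial over} $\cM$ fixed in Section~\ref{sect-basic-defs-fractions} (with the qualifier ``over $\cM$'' suppressed, as announced there), so no translation between the two formulations is needed. Consequently the proof is literally one line, and I would not expect any obstacle: all the work has already been carried out in the proof of Theorem~\ref{theorem-polynomial}, where the witnessing polynomial $x^2 \mmul g(x)^2 + x \mmul g(x)^2 - f(x) \mmul x^2 \mmul g(x)$ was exhibited and shown to be non-trivial. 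I might close by remarking that this corollary has teeth only for divisive meadows containing ``transcendental-like'' elements, since in $\Ratzd$ every element is already the root of a non-trivial polynomial, so it does not reprove Theorem~\ref{theorem-Q0}.
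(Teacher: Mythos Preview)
Your proposal is correct and matches the paper's approach exactly: the paper's own proof is the single line ``This follows immediately from Theorem~\ref{theorem-polynomial}.'' Your contraposition argument is precisely the content of that immediacy, and the extra remarks you make about the definitions lining up and about $\Ratzd$ are accurate but not needed.
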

\begin{proof}
This follows immediately from Theorem~\ref{theorem-polynomial}.
\qed
\end{proof}

\begin{corollary}
\label{corollary-theorem-polynomial-2}
A divisive meadow admits transformation into simple fractions only if 
there exists an $n \in \Nat$ such that each element of its carrier is 
the root of a non-trivial polynomial of degree $n$ or less.
\end{corollary}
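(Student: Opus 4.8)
The plan is to read this off directly from Theorem~\ref{theorem-polynomial}, with no extra machinery. Suppose $\cM$ is a divisive meadow that admits transformation into simple fractions. Theorem~\ref{theorem-polynomial} then supplies a single non-trivial polynomial $f(x)$ such that every element of the carrier of $\cM$ is a root of $f(x)$. Note that this one polynomial already fulfils the existential requirement of the corollary uniformly for all carrier elements; the only thing genuinely left to do is to exhibit the bound $n$.

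To that end I would first observe that, since $f(x)$ is non-trivial over $\cM$, its degree over $\cM$ is defined: by the definition given in Section~\ref{sect-basic-defs-fractions}, the degree is the appropriate largest exponent when $f(x)$ is non-trivial and not constant over $\cM$, it is $0$ when $f(x)$ is non-trivial and constant over $\cM$, and it is undefined only in the remaining case, where $f(x)$ fails to be non-trivial over $\cM$. So I would set $n$ to be the degree of $f(x)$ over $\cM$, which is some fixed natural number.

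For this choice of $n$, every element $v$ of the carrier of $\cM$ is a root of $f(x)$, and $f(x)$ is a non-trivial polynomial whose degree over $\cM$ equals $n$, hence in particular is $n$ or less. This is exactly the conclusion of the corollary, with the same polynomial $f(x)$ witnessing it for every carrier element. I do not anticipate any real obstacle: the substance of the claim is already contained in Theorem~\ref{theorem-polynomial}, and the remaining step is purely a matter of matching the definition of degree against the fact that the polynomial produced there is non-trivial.
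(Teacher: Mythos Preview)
Your proposal is correct and matches the paper's approach: the paper's proof is simply ``This follows immediately from Theorem~\ref{theorem-polynomial},'' and you have spelled out exactly the one obvious step---take $n$ to be the degree over $\cM$ of the non-trivial polynomial supplied by that theorem.
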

\begin{proof}
This follows immediately from Theorem~\ref{theorem-polynomial}.
\qed
\end{proof}

The next result gives us sufficient and necessary conditions of 
admitting transformation into simple fractions for divisive meadows of 
prime characteristic.
\begin{theorem}
\label{theorem-char-non-zero}
Let $k \in \Nat$ be a prime number, and
let $\cM$ be a divisive meadow of characteristic $k$.
Then the following are equivalent:
\begin{list}{}
 {\setlength{\leftmargin}{2em} \settowidth{\labelwidth}{(iii)}}
\item[\textup{(i)}]
$\cM$ admits transformation into simple fractions;
\item[\textup{(ii)}]
there exists an $n \in \Nat$ such that each element of the carrier of 
$\cM$ is the root of a non-trivial polynomial of degree $n$ or less;
\item[\textup{(iii)}]
there exists a non-trivial polynomial $f(x)$ such that each element of 
the carrier of $\cM$ is a root of $f(x)$;
\item[\textup{(iv)}]
there exists an $n \in \Natpos$ such that 
$\cM \models 1 \mdiv x = x^n$.
\end{list}
\end{theorem}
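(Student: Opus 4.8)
The plan is to prove the cycle of implications (i) $\Rightarrow$ (ii) $\Rightarrow$ (iii) $\Rightarrow$ (iv) $\Rightarrow$ (i). The implication (i) $\Rightarrow$ (ii) is precisely Corollary~\ref{corollary-theorem-polynomial-2}, so nothing needs to be done there; and (iv) $\Rightarrow$ (i) is routine: if $\cM \models 1 \mdiv x = x^n$, then by substitution $\cM \models p \mdiv q = p \mmul q^n$ for all terms $p,q$, so an easy induction on the structure of terms shows that each term is $\cM$-equal to a term in which the operator $\mdiv$ does not occur, and each such term $p$ satisfies $\cM \models p = p \mdiv 1$, which is a simple fraction. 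So the work lies in (ii) $\Rightarrow$ (iii) and, mainly, in (iii) $\Rightarrow$ (iv).

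For (ii) $\Rightarrow$ (iii) I would use that the characteristic $k$ is prime. Since $\cM \models \ul{k} = 0$, every coefficient $\ul{a}$ occurring in a polynomial may be replaced by $\ul{a \bmod k}$, so the prime field $\set{\ul{0},\ldots,\ul{k-1}}$ of $\cM$ is finite and there are only finitely many monic polynomials over it of degree at most $n$; let $F(x)$ be the product of all of them. Given an element $v$ of the carrier of $\cM$, condition (ii) supplies a non-trivial polynomial of degree at most $n$ having $v$ as a root; reducing its coefficients modulo $k$ and dividing by its leading coefficient (a nonzero, hence invertible, element of the prime field) turns it into one of the factors of $F(x)$, so $v$ is a root of $F(x)$ as well. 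Thus $\cM \models F(x) = 0$, and $F(x)$ is non-trivial because it is monic over the prime field while $\cM$, having prime characteristic, is a non-trivial meadow; hence (iii) holds with $f(x) = F(x)$.

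The heart of the proof is (iii) $\Rightarrow$ (iv), where the difficulty is to upgrade the single identity $\cM \models f(x) = 0$ into the uniform equation $1 \mdiv x = x^m$; the idea is to pass to residue fields. Using the axiom $(x \mmul x) \mdiv x = x$ together with $x \mdiv y = x \mmul (1 \mdiv y)$, the element $1 \mdiv a$ is a quasi-inverse of $a$ (i.e.\ $a \mmul (1 \mdiv a) \mmul a = a$), so the underlying ring of $\cM$ is von Neumann regular; hence it is reduced and all its prime ideals are maximal, so the natural map from $\cM$ into the product $\prod_{I} \cM/I$ over the maximal ideals $I$ of this ring is injective (the intersection of the maximal ideals equals the nilradical, which is zero), and one checks --- using the derivable identities $x \mmul x \mmul (1 \mdiv x) = x$ and $(1 \mdiv x) \mmul (1 \mdiv x) \mmul x = 1 \mdiv x$ --- that it is even a homomorphism of divisive meadows, so that $\cM$ is, up to isomorphism, a submeadow of $\prod_{I} \cM/I$. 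Each $\cM/I$ is a field that inherits $\ul{k} = 0$ and cannot be trivial, hence has characteristic $k$; and from $\cM \models f(x) = 0$ and the fact that $f$, after reduction modulo $k$, has a nonzero coefficient in the prime field, it follows that $\cM/I$ satisfies $f(x) = 0$ at every element of its carrier while $f$ is a nonzero polynomial of degree $\deg f$. Since a nonzero polynomial of degree $D$ over a field vanishes at most at $D$ elements, $\cM/I$ is the finite field with $k^d$ elements for some $d \geq 1$ with $k^d \leq \deg f$. There are only finitely many such $d$, so one can choose $m \geq 1$ for which $k^d - 1$ divides $m+1$ for every one of them; by Fermat's little theorem each such finite field then satisfies $1 \mdiv x = x^m$ (for $x \neq 0$ because $x^{m+1} = 1$, and for $x = 0$ because $m \geq 1$), so $\prod_{I} \cM/I$ does, and therefore so does $\cM$. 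This is (iv).

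The only genuinely substantial point is the reduction to residue fields in (iii) $\Rightarrow$ (iv): it leans on the standard (but not recalled in this paper) facts that a commutative von Neumann regular ring embeds into the product of its residue fields and that these residue fields carry the corresponding divisive meadow structure, and on the elementary bound on the size of a field over which a fixed nonzero polynomial is identically zero. Everything else is bookkeeping, mostly already available from the earlier sections.
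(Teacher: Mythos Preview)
Your proof is correct, and for (i) $\Rightarrow$ (ii), (ii) $\Rightarrow$ (iii), and (iv) $\Rightarrow$ (i) it matches the paper's argument essentially verbatim (the paper also multiplies together a transversal of the finitely many polynomial classes over the prime subfield to get the single polynomial in~(iii)).

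Where you diverge is (iii) $\Rightarrow$ (iv). The paper stays entirely inside $\cM$: after making $f$ monic it observes that every power $x^i$ with $i \geq \deg f$ is $\cM$-equivalent to a polynomial of degree below $\deg f$ with coefficients in the prime subfield; since there are only finitely many such polynomials, two powers $x^l$ and $x^m$ with $l > m$ must coincide in $\cM$, and then the calculation from the finite-meadow theorem gives $1 \mdiv x = x^{2(l-m)-1}$. Your route instead exploits the (external) fact that the underlying ring of a meadow is commutative von Neumann regular, embeds $\cM$ into the product of its residue fields, bounds the size of each residue field by $\deg f$, and uses Fermat's little theorem to manufacture a common exponent. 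Both arguments are sound; the paper's is more self-contained and uses nothing beyond what has been set up earlier in the text, while yours is more structural and makes transparent that the phenomenon is really a statement about a finite family of finite fields --- at the cost of importing the subdirect decomposition of von Neumann regular rings, which the paper never states.
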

\begin{proof}
Assume~(i).
Then~(ii) follows immediately from 
Corollary~\ref{corollary-theorem-polynomial-2}.

Assume~(ii).
Let $n \in \Nat$ be such that each element of the carrier of 
$\cM$ is the root of a non-trivial polynomial of degree $n$ or less.
\pagebreak[2]
We know from Theorem~4.4 and Lemma~4.6 in~\cite{BHT09a} and 
Theorem~\ref{theorem-defeqv-iMd-dMd} in this paper that a minimal
divisive meadow of prime characteristic is a finite divisive meadow.
Because $\cM$ is of prime characteristic and the interpretation of  
coefficients in $\cM$ is the same as the interpretation of coefficients 
in the minimal divisive submeadow of $\cM$, the set of non-trivial 
polynomials of degree $n$ or less modulo $\cM$-equivalence is finite.
Let $f(x)$ be the product of the elements of a transversal for this set.   
Then $f(x)$ is a non-trivial polynomial and each element of the carrier 
of $\cM$ is a root of~$f(x)$.

Assume~(iii).
Let $f(x)$ be a non-trivial polynomial such that each element of the 
carrier of $\cM$ is a root of $f(x)$.
Then $\cM \models f(x) = 0$.
Assume that $f(x) = a_n \mmul x^n + \ldots + a_1 \mmul x + a_0$ with 
$\cM \models a_n \neq 0$.
We know from Theorem~4.4 and Lemma~4.6 in~\cite{BHT09a}, Corollary~3.10 
in~\cite{BRS09a}, and Theorem~\ref{theorem-defeqv-iMd-dMd} in this paper 
that a minimal divisive meadow of prime characteristic is a finite 
divisive cancellation meadow.
Because $\cM$ is of prime characteristic, the interpretation of 
coefficients in $\cM$ is the same as the interpretation of coefficients 
in the minimal divisive submeadow of $\cM$, and 
$\cM \models a_n \neq 0$, we have $\cM \models a_n \mdiv a_n = 1$.
Dividing both sides of the equation $f(x) = 0$ by $a_n$ yields
$\cM \models 
 x^n + \ldots + (a_1 \mdiv a_n) \mmul x + a_0 \mdiv a_n = 0$.
From this it follows by induction on $i$ that, for all $i \geq n$, there 
exists a polynomial $g(x)$ of degree less than $n$ such that 
$\cM \models x^i = g(x)$.
From this and the fact that the set of polynomials of degree $n - 1$ or 
less modulo $\cM$-equivalence is finite (as explained above), the 
polynomials from the sequence $x$, $x^2$, $x^3$, $\ldots$ cannot be all
different modulo $\cM$-equivalence.
In other words, there exist $l,m \in \Natpos$ with $l > m$ such that 
$\cM \models x^l = x^m$. 
Let $l,m \in \Natpos$ be such that $l > m$ and $\cM \models x^l = x^m$.
Then we can prove like in the proof of Theorem~\ref{theorem-finite}
that $\cM \models 1 \mdiv x = x^{2 \mmul (l - m) - 1}$.

Assume~(iv).
Let $n \in \Natpos$ be such that $\cM \models 1 \mdiv x = x^n$.
Then it follows immediately that, for each term $p$ over $\sigdmd$, 
there exists a term $q$ over $\sigdmd$ of which no subterm is a fraction 
such that $\cM \models p = q$.
Hence, $\cM$ admits transformation into simple fractions.
\qed
\end{proof}

The next theorem tells us that admitting transformation into simple 
fractions is a property of divisive meadows that cannot be expressed as 
a first-order theory.
\begin{theorem}
\label{theorem-elementary}
Admitting transformation into simple fractions is not an elementary 
property of divisive meadows.
\end{theorem}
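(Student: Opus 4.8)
The plan is to exhibit a family of divisive meadows, each admitting transformation into simple fractions, whose ultraproduct does not admit transformation into simple fractions. Since every class of structures axiomatizable by a set of first-order sentences is closed under ultraproducts (by the theorem of {\L}o{\'s}), this shows that admitting transformation into simple fractions cannot be characterized, within the class of divisive meadows, by a first-order theory over $\sigdmd$; that is, it is not an elementary property.

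First I would, for each prime number $k$, take the finite divisive meadow $\Mdd{k}$ introduced at the end of Section~\ref{sect-Md}; by Theorem~\ref{theorem-finite} each $\Mdd{k}$ admits transformation into simple fractions. Then I would fix a non-principal ultrafilter $\cU$ on the set of all prime numbers and let $\cM$ be the ultraproduct of the family of all $\Mdd{k}$ (with $k$ prime) modulo $\cU$. Because $\eqnsdmd$ is a set of equations, the class of divisive meadows is a variety and hence closed under ultraproducts, so $\cM$ is itself a divisive meadow.

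Next I would compute the characteristic of $\cM$. For each $n \in \Natpos$, the set of primes that are not divisors of $n$ is cofinite in the set of all primes, hence belongs to $\cU$; moreover $\Mdd{k} \models \ul{n} \neq 0$ exactly when $k$ is not a divisor of $n$, since $\Mdd{k}$ has characteristic $k$. By {\L}o{\'s}'s theorem it follows that $\cM \models \ul{n} \neq 0$ for every $n \in \Natpos$, so $\cM$ has characteristic $0$. Hence, by Corollary~\ref{corollary-char-zero}, $\cM$ does not admit transformation into simple fractions.

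Finally I would derive the contradiction: if admitting transformation into simple fractions were an elementary property, the divisive meadows admitting it would form the class of models of some first-order theory over $\sigdmd$ and would therefore be closed under ultraproducts; but $\cM$ is an ultraproduct of such divisive meadows that is not one of them. The step requiring the most care is the bookkeeping for the application of {\L}o{\'s}'s theorem, together with the remark that the equational nature of $\eqnsdmd$ guarantees that $\cM$ is a genuine divisive meadow, so that Corollary~\ref{corollary-char-zero} does apply to it; everything else is routine.
\qed
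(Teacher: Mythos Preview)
Your proof is correct and follows essentially the same strategy as the paper: build, from the finite divisive meadows $\Mdd{k}$, a divisive meadow of characteristic $0$ that would have to lie in the putative elementary class, and then invoke Corollary~\ref{corollary-char-zero} for the contradiction. The only difference is cosmetic---the paper phrases the limiting construction via the compactness theorem, whereas you use an ultraproduct and {\L}o{\'s}'s theorem; these are two packagings of the same idea.
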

\begin{proof}
In order to prove this theorem by contradiction, assume that there 
exists a first-order theory $T$ over $\sigdmd$ such that the models of 
$\eqnsdmd \union T$ are precisely the divisive meadows with the 
mentioned property.

Let $T$ be a first-order theory over $\sigdmd$ such that the models of 
$\eqnsdmd \union T$ are precisely the divisive meadows with the 
mentioned property.
Because of Theorem~\ref{theorem-finite}, for each $k' \in \Nat$, there 
exists a square-free $k'' \in \Nat$ with $k'' > k'$ such that
$\Mdd{k''} \models 
 \eqnsdmd \union T \union 
 \set{\ul{k} \neq 0 \where k \in \Natpos, k \leq k'}$.%
\footnote
{$\Mdd{k}$, for square-free $k \in \Natpos$, was introduced at the end 
 of Section~\ref{sect-Md}.}
From this, it follows that, for 
each $k' \in \Nat$, there exists a divisive meadow $\cM$ such that
$\cM \models
 \eqnsdmd \union T \union \linebreak[2] 
 \set{\ul{k} \neq 0 \where k \in \Natpos, k \leq k'}$. 
Hence, by the compactness of first-order logic, there exists a divisive
meadow $\cM'$ such that
$\cM' \models
 \eqnsdmd \union T \union \set{\ul{k} \neq 0 \where k \in \Natpos}$.
In other words, there exists a divisive meadow $\cM$ of characteristic 
$0$ such that $\cM \models \eqnsdmd \union T$.
This contradicts Corollary~\ref{corollary-char-zero}.
Hence, there does not exist a first-order theory $T$ over $\sigdmd$ such 
that the models of $\eqnsdmd \union T$ are precisely the divisive 
meadows with the mentioned property.
\qed
\end{proof}

The next theorem tells us that each divisive meadow admits 
transformation into sums of simple fractions.
\begin{theorem}
\label{theorem-sums-of-fractions}
For each term $p$ over the signature $\sigdmd$, there exists a finite 
number of simple fractions, say $q_1,\ldots,q_n$, such that 
$\eqnsdmd \deriv p = q_1 + \ldots + q_n$.
\end{theorem}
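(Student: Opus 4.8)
The plan is to prove the statement by induction on the structure of the term $p$, showing that every term over $\sigdmd$ is provably equal (from $\eqnsdmd$) to a finite sum of simple fractions. A simple fraction is a term $f \mdiv g$ where $f$ and $g$ are polynomials, i.e.\ contain no occurrence of $\mdiv$; it is convenient to observe that a term in which $\mdiv$ does not occur at all is, up to the derivable equation $p = p \mdiv 1$, itself a simple fraction, so "finite sum of simple fractions" includes the polynomial-only terms. The base cases $p \equiv 0$ and $p \equiv 1$ are immediate since $\eqnsdmd \deriv 0 = 0 \mdiv 1$ and $\eqnsdmd \deriv 1 = 1 \mdiv 1$.

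For the inductive step I would treat the four operators. For $p \equiv p_1 + p_2$, by the induction hypothesis each $p_i$ is a sum of simple fractions, so $p$ is too and nothing more is needed. For $p \equiv -p_1$, write $p_1$ as $\sum_{j} (f_j \mdiv g_j)$ by the induction hypothesis; then using $\eqnsdmd \deriv -(f \mdiv g) = (-f) \mdiv g$ (a consequence of the ring axioms together with $x \mdiv y = x \mmul (1 \mdiv y)$) and distributivity of $-$ over $+$, $p$ is again a sum of simple fractions. For $p \equiv p_1 \mmul p_2$, write each $p_i$ as a sum of simple fractions and expand the product using distributivity into a sum of products of two simple fractions; then apply the derivable equation $(x \mdiv y) \mmul (z \mdiv w) = (x \mmul z) \mdiv (y \mmul w)$ from Proposition~\ref{prop-dMd-derivable}, noting that $f_1 \mmul f_2$ and $g_1 \mmul g_2$ remain $\mdiv$-free, hence each product term becomes a simple fraction. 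The case $p \equiv p_1 \mdiv p_2$ is the crux: by the induction hypothesis $p_1 = \sum_{i} (f_i \mdiv g_i)$ and $p_2 = \sum_{j} (h_j \mdiv k_j)$, and using $x \mdiv y = x \mmul (1 \mdiv y)$ it suffices to handle $1 \mdiv p_2$ and then reduce to the multiplication case already treated. To compute $1 \mdiv (\sum_j h_j \mdiv k_j)$, I would first bring the sum $p_2$ to a single simple fraction $h \mdiv k$ by repeatedly applying $(x \mdiv y) + (z \mdiv w) = (x \mmul w + z \mmul y) \mdiv (y \mmul w)$ — this last equation is readily derivable from $\eqnsdiv$ and distributivity, combining two simple fractions into one — and then use $1 \mdiv (h \mdiv k) = k \mdiv h$, which follows from Proposition~\ref{prop-dMd-derivable} ($(x \mdiv y) \mdiv (z \mdiv w) = (x \mmul w) \mdiv (y \mmul z)$ with $x = 1$, $y = 1$, and $1 \mmul u = u$, $u \mmul 1 = u$). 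Thus $1 \mdiv p_2$ equals the simple fraction $k \mdiv h$, and $p = p_1 \mmul (k \mdiv h)$ is handled by the multiplication case, yielding a finite sum of simple fractions.

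The main obstacle I expect is not conceptual but bookkeeping: one must be careful that the auxiliary "combine two simple fractions into one" equation, $(x \mdiv y) + (z \mdiv w) = (x \mmul w + z \mmul y) \mdiv (y \mmul w)$, is actually derivable from $\eqnsdmd$ alone without any cancellation or inverse-law side conditions. This is where the zero-totalized nature of the division operator matters: in an ordinary field this equation would require $y, w \neq 0$, but over $\eqnsdmd$ one checks it holds unconditionally — indeed it follows by expanding both sides via $u \mdiv v = u \mmul (1 \mmul (1 \mdiv v))$, the derivable law $1 \mdiv (y \mmul w) = (1 \mdiv y) \mmul (1 \mdiv w)$ (Proposition~\ref{prop-dMd-derivable}), and the ring axioms, so that both sides reduce to $x \mmul (1 \mdiv y) + z \mmul (1 \mdiv w)$. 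Once this equation and the two composition equations of Proposition~\ref{prop-dMd-derivable} are in hand, the induction goes through routinely; in fact the argument shows the stronger statement that every term is provably equal to a \emph{single} simple fraction plus a polynomial summand, but for the theorem as stated it is enough to keep the sum form.
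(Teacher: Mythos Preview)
Your induction breaks at the division step because the equation
$(x \mdiv y) + (z \mdiv w) = (x \mmul w + z \mmul y) \mdiv (y \mmul w)$
is \emph{not} derivable from $\eqnsdmd$. Take $x = y = z = 1$ and $w = 0$ in $\Ratzd$: the left-hand side is $1 \mdiv 1 + 1 \mdiv 0 = 1 + 0 = 1$, while the right-hand side is $(0 + 1) \mdiv 0 = 0$. Your justification goes wrong at the last step: expanding the right-hand side via $1 \mdiv (y \mmul w) = (1 \mdiv y) \mmul (1 \mdiv w)$ and distributivity yields
$x \mmul (w \mdiv w) \mmul (1 \mdiv y) + z \mmul (y \mdiv y) \mmul (1 \mdiv w)$,
and the factors $w \mdiv w$, $y \mdiv y$ are not derivably equal to $1$, so they cannot be dropped. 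Without this merging step you have no way to reduce $p_2$ to a single simple fraction, and hence no way to form $1 \mdiv p_2$ in the shape you need.

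Your closing remark that the argument would even yield ``a single simple fraction plus a polynomial summand'' should have been a red flag: a polynomial is itself a simple fraction (via $h = h \mdiv 1$), so that would make every term a sum of at most two simple fractions, directly contradicting Corollary~\ref{corollary-bounded-sums-of-fractions-1} for $n = 3$. The paper's proof sidesteps the whole issue by appealing to the standard meadow form of~\cite{BBP13a}, whose inductive construction treats the inverse operator without ever attempting to merge a sum of fractions into a single one; distributivity is only applied afterwards, to split a product into a sum of level-$0$ terms $p' \mmul {q'}^{-1}$.
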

\begin{proof}
By Theorem~2.1 in~\cite{BBP13a}, for each term $p$ over the signature 
$\sigimd$, there exists a term $q$ in standard meadow form such that 
$\eqnsimd \deriv p = q$.
By the distributivity of multiplication over addition, each term in 
standard meadow form is derivably equal to a sum of terms in standard 
meadow form of level $0$; and terms in standard meadow form of level $0$ 
are of the form $p' \mmul {q'}\minv$ with $p'$ and $q'$ polynomials.
By Theorem~\ref{theorem-defeqv-iMd-dMd}, this proves the current 
theorem. 
\qed
\end{proof}

A question arising from Theorem~\ref{theorem-sums-of-fractions} is 
whether there exists a natural number $k$ such that each term over the 
signature $\sigdmd$ is derivably equal to a sum of at most $k$ simple 
fractions.
It is a corollary of Theorem~\ref{theorem-bounded-sums-of-fractions} 
below that this question must be answered negatively.
Below, we will write $\Realzd$ for the zero-totalized field of real 
numbers with the multiplicative inverse operation replaced by a division 
operation.
\begin{theorem}
\label{theorem-bounded-sums-of-fractions}
For each $n \in \Natpos$, 
let $p^n(x_1,\ldots,x_n) = 1 \mdiv x_1 + \ldots + 1 \mdiv x_n$.
Then, for each $n \in \Natpos$ with $n > 1$, there do not exist 
$n - 1$ simple fractions, say $q_1,\ldots,q_{n-1}$, such that 
$\Realzd \models p^n(x_1,\ldots,x_n) = q_1 + \ldots + q_{n-1}$.
\end{theorem}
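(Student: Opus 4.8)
The plan is to argue by contradiction using a counting argument on the number of distinct poles of the real rational functions involved. Suppose $n > 1$ and there exist $n-1$ simple fractions $q_1, \ldots, q_{n-1}$ with $\Realzd \models p^n(x_1,\ldots,x_n) = q_1 + \ldots + q_{n-1}$. Each $q_j$ is of the form $f_j(\vec u_j) \mdiv g_j(\vec u_j)$ where $f_j, g_j$ are polynomials; by substituting suitable rational values for all variables other than $x_1, \ldots, x_n$ and absorbing, I may assume each $q_j$ is a fraction of (multivariate) polynomials in $x_1, \ldots, x_n$ over $\Realzd$. So over $\Realzd$ we would have an identity of rational functions $\sum_{i=1}^n \frac{1}{x_i} = \sum_{j=1}^{n-1} \frac{f_j}{g_j}$ holding wherever all the interpretations are the conventional ones, i.e.\ away from the zero sets of the $x_i$ and the $g_j$. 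The key point is that the left-hand side, viewed as a rational function of $x_1$ (with $x_2, \ldots, x_n$ treated as generic parameters, or simply specialized to generic rational values), has a simple pole at $x_1 = 0$; and likewise for each variable $x_i$ it has a pole at $x_i = 0$.

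The central step is to isolate, for each $i$, a ``direction'' in which the left-hand side blows up but which the right-hand side cannot accommodate. First I would specialize $x_2, \ldots, x_n$ to generic nonzero rational constants $c_2, \ldots, c_n$, chosen to avoid all the (finitely many) ``bad'' coincidences — i.e.\ so that no $g_j$ becomes identically zero or acquires $x_1 = 0$ as a root for a spurious reason, and so that the specialized $q_j$'s remain genuine fractions of polynomials in $x_1$. Then $\Realzd \models c + 1 \mdiv x_1 = \sum_j \hat q_j(x_1)$ for the constant $c = \sum_{i \geq 2} 1/c_i$ and specialized simple fractions $\hat q_j$. As in the proof of Theorem~\ref{theorem-Q0}, the left-hand side is unbounded as $x_1 \to 0^+$, so at least one $\hat q_j$ must have $x_1 = 0$ as a pole, i.e.\ $g_j(0, c_2, \ldots, c_n) = 0$. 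This means that for each $i \in \{1, \ldots, n\}$ there is at least one index $j = \sigma(i)$ such that $g_j$ vanishes at the point obtained by setting $x_i = 0$ and the other variables to generic constants — equivalently, $g_j$ has a nontrivial dependence on $x_i$ with a root ``along $x_i = 0$''.

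The main obstacle — and the crux of the argument — is to show that the map $i \mapsto \sigma(i)$ can be taken injective, which forces $n \leq n-1$, a contradiction. The idea is that a single simple fraction $f_j \mdiv g_j$ cannot simultaneously supply the singular behaviour of $1/x_i$ and of $1/x_{i'}$ for two distinct $i, i'$: if it did, then $g_j$ would vanish both when $x_i = 0$ (generic other coordinates) and when $x_{i'} = 0$ (generic other coordinates), yet the residue of $f_j/g_j$ along $x_i = 0$ must be the constant $1$ (matching the residue of $1/x_i$, since no other term on the right contributes a pole there — this is where the genericity of the constants and a partial-fraction / residue comparison in the variable $x_i$ is invoked) and likewise along $x_{i'} = 0$; a careful local analysis near the point where $x_i = x_{i'} = 0$ and the rest is generic shows these two requirements are incompatible for a quotient of polynomials, because near such a point $f_j/g_j$ would have to behave like $1/x_i$ and like $1/x_{i'}$ simultaneously, whereas a rational function has a well-defined order of vanishing along each coordinate hyperplane and these data are independent. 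Making this local incompatibility precise — presumably by expanding $g_j$ in the bigraded pieces in $x_i$ and $x_{i'}$ and tracking the lowest-order terms — is the technical heart of the proof; once injectivity of $\sigma$ is secured, the bound $n \le n-1$ yields the desired contradiction, completing the proof.
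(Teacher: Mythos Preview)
Your approach is genuinely different from the paper's, and the step you flag as ``the technical heart'' --- the injectivity of $i \mapsto \sigma(i)$ --- is a real gap, not just a detail to be filled in. Two concrete problems. First, nothing prevents several of the $g_j$ from vanishing along the same hyperplane $x_i = 0$ for generic values of the other coordinates, so the parenthetical claim that ``no other term on the right contributes a pole there'' is unjustified; the residue-matching argument therefore does not pin down a unique $\sigma(i)$ with residue exactly~$1$. Second, even granting some version of residue bookkeeping, the bigraded local expansion you allude to is not carried out, and it is not clear that it yields injectivity without a substantially more elaborate argument.

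The paper avoids this difficulty entirely by induction on $n$. Assuming $p^{n+1} = f_1 \mdiv g_1 + \cdots + f_n \mdiv g_n$, one specialises only the single variable $x_{n+1}$ to $0$, obtaining $p^n = f'_1 \mdiv g'_1 + \cdots + f'_n \mdiv g'_n$ with $f'_j, g'_j$ the substituted polynomials. Your continuity/unboundedness idea (essentially the proof of Theorem~\ref{theorem-Q0}) is then applied, but for \emph{every} real vector $\vec r = (r_1,\ldots,r_n)$ rather than a single generic one: for each such $\vec r$ some $g_j[\vec r](0) = 0$, i.e.\ some $g'_j[\vec r] = 0$, so the product $g'_1 \cdots g'_n$ vanishes identically on $\Real^n$. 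Since $\Real[x_1,\ldots,x_n]$ is an integral domain, one factor $g'_j$ is the zero polynomial; then $f'_j \mdiv g'_j$ is identically $0$ in $\Realzd$, and $p^n$ is already a sum of $n-1$ simple fractions, contradicting the induction hypothesis. The integral-domain step replaces your injectivity bookkeeping with a single clean algebraic fact, and that is what makes the argument close.
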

\begin{proof}
We prove this by induction on $n$.

The basis step, where $n = 2$, is easily proved by contradiction.
Assume that there exists a simple fraction $q(x_1,x_2)$ such that 
$\Realzd \models 1 \mdiv x_1 + 1 \mdiv x_2 = q(x_1,x_2)$.
Then $\Realzd \models 1 + 1 \mdiv x_2 = q(1,x_2)$.
This contradicts the fact, following immediately from 
Theorem~\ref{theorem-Q0}, that 
$\Ratzd \not\models 1 + 1 \mdiv x_2 = q(1,x_2)$.
Hence, there does not exist a simple fraction, say $q_1$, such 
that $\Realzd \models p^2(x_1,x_2) = q_1$.

The inductive step is also proved by contradiction. 
Assume that there exist $n$ simple fractions, say 
$f_1 \mdiv g_1,\ldots,f_n \mdiv g_n$, 
such that 
$\Realzd \models 
 p^{n+1}(x_1,\ldots,x_{n+1}) = f_1 \mdiv g_1 + \ldots + f_n \mdiv g_n$.
Here and in the remainder of the proof, all variables that have 
occurrences in $f_1,g_1,\ldots,f_n,g_n$ are understood to be among 
$x_1,\ldots,x_{n+1}$.
Let $r_1,\ldots,r_n$ be real numbers, and
let $\vec{r}$ be the vector $(r_1,\ldots,r_n)$.

Let $p^{n+1}[\vec{r}]$ be the unary function on real numbers that is the 
interpretation of $p^{n+1}(x_1,\ldots,x_{n+1})$ in $\Realzd$ if 
$x_1,\ldots,x_n$ are assigned the values $r_1,\ldots,r_n$, respectively.
Then $p^{n+1}[\vec{r}](v) = u + 1 \mdiv v$ with 
$u = 1 \mdiv r_1 + \ldots + 1 \mdiv r_n$ a real number.
Consequently, $p^{n+1}[\vec{r}]$ is continuous everywhere except at $0$.

For each $i \in \Natpos$ with $i \leq n$, 
let $f_i[\vec{r}]$, $g_i[\vec{r}]$, and $q_i[\vec{r}]$ be the unary 
functions on real numbers that are the interpretations of $f_i$, $g_i$,
and $f_i \mdiv g_i$, respectively, in $\Realzd$ if $x_1,\ldots,x_n$ are 
assigned the values $r_1,\ldots,r_n$, respectively.
Let $i \in \Natpos$ be such that $i \leq n$.
Then $q_i[\vec{r}](v) = f_i[\vec{r}](v) \mdiv g_i[\vec{r}](v)$.
Because $g_i$ is a polynomial, $g_i[\vec{r}]$ is continuous.
Now assume that $g_i[\vec{r}](0) \neq 0$.
Then, in the case where $g_i[\vec{r}](0) > 0$, by the continuity of 
$g_i[\vec{r}]$, there exists a rational number $\epsilon_{i,\vec{r}}$ 
such that, for each 
$v \in (-\epsilon_{i,\vec{r}},\epsilon_{i,\vec{r}})$, 
$g_i[\vec{r}](v) > 0$.
Likewise, in the case where $g_i[\vec{r}](0) < 0$, there exists a 
rational number $\epsilon_{i,\vec{r}}$ such that, for each 
$v \in (-\epsilon_{i,\vec{r}},\epsilon_{i,\vec{r}})$, 
$g_i[\vec{r}](v) < 0$.
From this, if follows that $q_i[\vec{r}]$ is continuous in the interval
$(-\epsilon_{i,\vec{r}},\epsilon_{i,\vec{r}})$ if 
$g_i[\vec{r}](0) \neq 0$.

From the continuity results established in the previous two paragraphs,
we can prove by contradiction the claim that there exists an 
$i \in \Natpos$ with $i \leq n$ such that $g_i[\vec{r}](0) = 0$.
Assume the contrary.
For each $i \in \Natpos$ with $i \leq n$, let $\epsilon_{i,\vec{r}}$ be
as indicated above.
Moreover, let $\epsilon_{\vec{r}}$ be the minimum of 
$\epsilon_{1,\vec{r}},\ldots,\epsilon_{n,\vec{r}}$.
Then, for each $i \in \Natpos$ with $i \leq n$, $q_i[\vec{r}]$ is 
continuous in the interval $(-\epsilon_{\vec{r}},\epsilon_{\vec{r}})$.
From this and the fact that 
$p^{n+1}[\vec{r}](v) = q_1[\vec{r}](v) + \ldots + q_n[\vec{r}](v)$ it 
follows that $p^{n+1}[\vec{r}]$ is continuous in the interval 
$(-\epsilon_{\vec{r}},\epsilon_{\vec{r}})$.
This contradicts the fact that $p^{n+1}[\vec{r}]$ is not continuous at $0$.
Hence, there exists an $i \in \Natpos$ with $i \leq n$ such that 
$g_i[\vec{r}](0) = 0$.

For each $i \in \Natpos$ with $i \leq n$, 
let $f'_i$ and $g'_i$ be the terms obtained by replacing in $f_i$ and 
$g_i$, respectively, all occurrences of $x_{n+1}$ by $0$, and
let $f'_i[\vec{r}]$ and $g'_i[\vec{r}]$ be the real numbers that are the 
interpretations of $f'_i$ and $g'_i$, respectively, in $\Realzd$ if 
$x_1,\ldots,x_n$ are assigned the values $r_1,\ldots,r_n$, respectively.
Then 
$\Realzd \models
 p^n(x_1,\ldots,x_n) = f'_1 \mdiv g'_1 + \linebreak[2]
 \ldots + f'_n \mdiv g'_n$
because $\Realzd \models 1 \mdiv 0 = 0$. 
Moreover, by the claim proved above, there exists an $i \in \Natpos$ 
with $i \leq n$ such that $g'_i[\vec{r}] = 0$.
From this, it follows that  
$g'_1[\vec{r}] \mmul \ldots \mmul g'_n[\vec{r}] = 0$.

Thus, we have established above that, for each vector $\vec{r}$ of $n$ 
real numbers, $g'_1[\vec{r}] \mmul \ldots \mmul g'_n[\vec{r}] = 0$.
Hence, $\Realzd \models g'_1 \mmul \ldots \mmul g'_n = 0$.
Using this result, we can prove the claim that there exists an 
$i \in \Natpos$ with $i \leq n$ such that $\Realzd \models g'_i = 0$.
For each $i \in \Natpos$ with $i \leq n$, $g'_i$ is a multivariate 
polynomial in $n$ variables.
Because the ring of polynomials over $\Real$ in $n$ variables is an 
integral domain (see e.g.\ Proposition~4.29 in~\cite{Kna06a}), the ring 
of polynomials over $\Realzd$ in $n$ variables is an integral domain as
well.
In other words, there are no zero divisors in this polynomial ring.
From this and the fact that 
$\Realzd \models g'_1 \mmul \ldots \mmul g'_n = 0$, it follows that 
there exists an $i \in \Natpos$ with $i \leq n$ such that 
$\Realzd \models g'_i = 0$.

Take $j \in \Natpos$ with $j \leq n$ such that 
$\Realzd \models g'_j = 0$.
Such a $j$ exists by the claim just proved.
Then $\Realzd \models f'_j \mdiv g'_j = 0$.
From this and the fact that 
$\Realzd \models
 p^n(x_1,\ldots,x_n) = f'_1 \mdiv g'_1 + \ldots + f'_n \mdiv g'_n$,
it follows that
$\Realzd \models
 p^n(x_1,\ldots,x_n) = 
 f'_1 \mdiv g'_1 + \ldots + f'_{j-1} \mdiv g'_{j-1} +
 f'_{j+1} \mdiv g'_{j+1} + \ldots + f'_n \mdiv g'_n$.
This contradicts the induction hypothesis.
Hence, there do not exist $n$ simple fractions, say 
$f_1 \mdiv g_1,\ldots,f_n \mdiv g_n$, 
such that 
$\Realzd \models 
 p^{n+1}(x_1,\ldots,x_{n+1}) = f_1 \mdiv g_1 + \ldots + f_n \mdiv g_n$.
\qed
\end{proof}

Theorem~\ref{theorem-bounded-sums-of-fractions} gives rise to the 
following corollary.
\begin{corollary}
\label{corollary-bounded-sums-of-fractions-1}
For each $n \in \Natpos$, 
let $p^n(x_1,\ldots,x_n) = 1 \mdiv x_1 + \ldots + 1 \mdiv x_n$.
Then, for each $n \in \Natpos$ with $n > 1$, there do not exist 
$n - 1$ simple fractions, say $q_1,\ldots,q_{n-1}$, such that 
$\eqnsdmd \deriv p^n(x_1,\ldots,x_n) = q_1 + \ldots + q_{n-1}$.
\end{corollary}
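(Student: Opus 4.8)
The plan is to obtain the corollary as an immediate consequence of Theorem~\ref{theorem-bounded-sums-of-fractions} together with the soundness of equational logic. The key observation is that $\Realzd$ is a divisive meadow, hence a model of $\eqnsdmd$: the field of real numbers is a field in which one may totalize the multiplicative inverse by setting $0\minv = 0$, yielding a zero-totalized field, which by Theorem~\ref{theorem-cMd-F0} is a (non-trivial, inversive cancellation) meadow, so it satisfies $\eqnsimd$; and by the definitional equivalence of $\eqnsimd$ and $\eqnsdmd$ (Theorem~\ref{theorem-defeqv-iMd-dMd}) and the remarks following it, its divisive counterpart $\Realzd$ — as described at the end of Section~\ref{sect-Md} — satisfies $\eqnsdmd$. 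In particular $\Realzd \models \eqnsdmd$.

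Granting this, I would argue by contradiction. Suppose that for some $n \in \Natpos$ with $n > 1$ there exist $n-1$ simple fractions $q_1,\ldots,q_{n-1}$ such that $\eqnsdmd \deriv p^n(x_1,\ldots,x_n) = q_1 + \ldots + q_{n-1}$. Since $\Realzd$ is a model of $\eqnsdmd$, soundness of equational derivation gives $\Realzd \models p^n(x_1,\ldots,x_n) = q_1 + \ldots + q_{n-1}$. This directly contradicts Theorem~\ref{theorem-bounded-sums-of-fractions}, which asserts precisely that no such $n-1$ simple fractions exist for $\Realzd$. Hence no such derivation can exist, which is the statement of the corollary.

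There is essentially no obstacle here: the corollary is a soundness transfer from the concrete model $\Realzd$ to the equational theory $\eqnsdmd$, and the only point that merits being spelled out explicitly is the claim $\Realzd \models \eqnsdmd$, which follows from the construction of $\Realzd$ recalled in Section~\ref{sect-Md} and from Theorems~\ref{theorem-cMd-F0} and~\ref{theorem-defeqv-iMd-dMd}. One could alternatively phrase the whole argument in one line: every equation derivable from $\eqnsdmd$ holds in every divisive meadow, in particular in $\Realzd$, so Theorem~\ref{theorem-bounded-sums-of-fractions} yields the corollary.
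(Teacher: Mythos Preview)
Your proof is correct and matches the paper's approach: the paper's proof consists of the single sentence ``This follows immediately from Theorem~\ref{theorem-bounded-sums-of-fractions},'' and you have simply spelled out the implicit soundness step (that $\Realzd \models \eqnsdmd$, so derivability implies validity in $\Realzd$) in more detail than the paper does.
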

\begin{proof}
This follows immediately from 
Theorem~\ref{theorem-bounded-sums-of-fractions}.
\qed
\end{proof}

Corollary~\ref{corollary-bounded-sums-of-fractions-1} in its turn gives 
rise to the corollary announced above.
\begin{corollary}
\label{corollary-bounded-sums-of-fractions-2}
There does not exist a $k \in \Natpos$ such that, for each term $p$ over 
the signature $\sigdmd$, there exists $k$ simple fractions, say 
$q_1,\ldots,q_k$, such that $\eqnsdmd \deriv p = q_1 + \ldots + q_k$.
\end{corollary}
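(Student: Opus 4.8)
The plan is to derive Corollary~\ref{corollary-bounded-sums-of-fractions-2} directly from Corollary~\ref{corollary-bounded-sums-of-fractions-1} by a short proof by contradiction. Suppose, towards a contradiction, that such a $k \in \Natpos$ exists; that is, every term $p$ over $\sigdmd$ is derivably equal (over $\eqnsdmd$) to a sum of at most $k$ simple fractions. The idea is then to pick a single term whose derivable-equality to a sum of at most $k$ simple fractions is ruled out by Corollary~\ref{corollary-bounded-sums-of-fractions-1}.

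First I would take the term $p = p^{k+1}(x_1,\ldots,x_{k+1}) = 1 \mdiv x_1 + \ldots + 1 \mdiv x_{k+1}$, using the notation of Corollary~\ref{corollary-bounded-sums-of-fractions-1}. Note that $k+1 > 1$, so the corollary applies to this $n = k+1$. By the assumed property of $k$, there would exist $k$ simple fractions $q_1,\ldots,q_k$ with $\eqnsdmd \deriv p^{k+1}(x_1,\ldots,x_{k+1}) = q_1 + \ldots + q_k$. A minor technical point to mention is that the assumption gives $k$ simple fractions where Corollary~\ref{corollary-bounded-sums-of-fractions-1} speaks of $(k+1) - 1 = k$ simple fractions, so the counts match exactly and there is nothing to pad; if one wanted to be scrupulous one could observe that a sum of \emph{at most} $k$ simple fractions is trivially a sum of exactly $k$ simple fractions by adjoining copies of $0 \mdiv 1$, but here the numbers already agree. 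This directly contradicts Corollary~\ref{corollary-bounded-sums-of-fractions-1} with $n = k+1$, which asserts that no such $k$ simple fractions exist. Hence no such $k$ can exist.

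I do not expect any real obstacle here: the statement is a clean existential-to-universal negation, and Corollary~\ref{corollary-bounded-sums-of-fractions-1} has been tailored precisely to supply, for every candidate bound $k$, an explicit witnessing term ($p^{k+1}$) that defeats it. The only thing to be careful about is the bookkeeping between ``$k$ simple fractions'' and ``$n-1$ simple fractions with $n = k+1$'', which is immediate. A concise write-up is therefore:

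\begin{proof}
Suppose, towards a contradiction, that there exists a $k \in \Natpos$ such that, for each term $p$ over $\sigdmd$, there exist $k$ simple fractions $q_1,\ldots,q_k$ with $\eqnsdmd \deriv p = q_1 + \ldots + q_k$. Applying this to the term $p^{k+1}(x_1,\ldots,x_{k+1}) = 1 \mdiv x_1 + \ldots + 1 \mdiv x_{k+1}$ from Corollary~\ref{corollary-bounded-sums-of-fractions-1}, we obtain $k$ simple fractions $q_1,\ldots,q_k$ such that $\eqnsdmd \deriv p^{k+1}(x_1,\ldots,x_{k+1}) = q_1 + \ldots + q_k$. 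Since $k+1 > 1$ and $k = (k+1) - 1$, this contradicts Corollary~\ref{corollary-bounded-sums-of-fractions-1} taken with $n = k+1$. Hence no such $k \in \Natpos$ exists.
\qed
\end{proof}
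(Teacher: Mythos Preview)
Your proof is correct and follows exactly the paper's approach: the paper simply states that the corollary follows immediately from Corollary~\ref{corollary-bounded-sums-of-fractions-1}, and you have merely spelled out the obvious instantiation with $n = k+1$.
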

\begin{proof}
This follows immediately from 
Corollary~\ref{corollary-bounded-sums-of-fractions-1}.
\qed
\end{proof}

\section{Transformation of Closed Fractions}
\label{sect-closed-fractions}

In this section, we establish results about the transformation into 
simple fractions for closed terms over the signature of divisive 
meadows.

The first result concerns the axioms of a divisive meadow.
\begin{theorem}
\label{theorem-closed-Md}
$\eqnsdmd$ does not admit transformation into simple fractions for 
closed terms.
\end{theorem}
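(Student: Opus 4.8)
The plan is to produce one closed term $p$ over $\sigdmd$ such that $\eqnsdmd \deriv p = q$ holds for no simple closed fraction $q$; I would take $p$ to be $1 + 1 \mdiv \ul{2}$.

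First I would pin down the shape of a simple closed fraction modulo $\eqnsdmd$-derivability. A simple fraction is of the form $f \mdiv g$ with neither $f$ nor $g$ having a $\mdiv$-rooted subterm, so if it is in addition closed, then $f$ and $g$ are closed terms over $\sigcr$; by Proposition~\ref{prop-basic-terms-cr} each of $f,g$ is $\eqnsdmd$-derivably equal to $\ul{0}$, to $\ul{k}$, or to $-\ul{k}$ for some $k \in \Natpos$. Using in addition the equations $x \mdiv 0 = 0$, $0 \mdiv y = 0$, $1 \mdiv (-x) = -(1 \mdiv x)$ and $x \mdiv y = x \mmul (1 \mdiv y)$ --- all derivable from $\eqnsdmd$ (cf.\ Proposition~\ref{prop-dMd-derivable}) --- it follows that every simple closed fraction is $\eqnsdmd$-derivably equal to $\ul{0}$, to $\ul{n} \mdiv \ul{m}$, or to $-(\ul{n} \mdiv \ul{m})$ for some $n,m \in \Natpos$. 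Hence it suffices to derive a contradiction from each of the three assumptions $\eqnsdmd \deriv p = \ul{0}$, $\eqnsdmd \deriv p = \ul{n} \mdiv \ul{m}$, and $\eqnsdmd \deriv p = -(\ul{n} \mdiv \ul{m})$.

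For this I would appeal to soundness with respect to two models of $\eqnsdmd$, namely $\Ratzd$ and $\Mdd{2}$. Assume $\eqnsdmd \deriv p = q$ with $q$ one of the three forms above. In $\Ratzd$, $p$ takes the value $3/2$, which is positive and nonzero; this excludes $q = \ul{0}$ and $q = -(\ul{n} \mdiv \ul{m})$, and forces $q = \ul{n} \mdiv \ul{m}$ with $n/m = 3/2$, i.e.\ $2n = 3m$, so that (as $3$ is odd) $m$ is even. Now evaluate $p = q$ in $\Mdd{2}$, which satisfies $\eqnsdmd \union \set{\ul{2} = 0}$ and is the two-element zero-totalized field, so $0 \neq 1$ there: since $\ul{2} = 0$ we get $p = 1 + (1 \mdiv 0) = 1$, while $\ul{m} = 0$ (as $m$ is even) gives $q = \ul{n} \mdiv 0 = 0$; thus $p = q$ yields $1 = 0$ in $\Mdd{2}$, a contradiction. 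Consequently $1 + 1 \mdiv \ul{2}$ is $\eqnsdmd$-derivably equal to no simple closed fraction, which proves the theorem.

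The only mildly technical step is the first one --- the bookkeeping that reduces an arbitrary simple closed fraction to one of the three normal forms $\ul{0}$, $\ul{n} \mdiv \ul{m}$, $-(\ul{n} \mdiv \ul{m})$ --- after which the argument is a brief soundness check. The point I would stress is that no single divisive meadow blocks all candidate simple closed fractions simultaneously: $\Ratzd$ fixes the rational value of $p$, and hence the numerator-to-denominator ratio of any candidate $q$, while $\Mdd{2}$ is exactly what obstructs the clearing of denominators $1 + 1 \mdiv \ul{2} = \ul{3} \mdiv \ul{2}$, a transformation valid in $\Ratzd$ but not in characteristic $2$.
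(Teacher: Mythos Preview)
Your proof is correct and follows essentially the same approach as the paper's own proof: the same witness $1 + 1 \mdiv \ul{2}$, the same reduction of simple closed fractions to the three forms $\ul{0}$, $\ul{n} \mdiv \ul{m}$, $-(\ul{n} \mdiv \ul{m})$ via Proposition~\ref{prop-basic-terms-cr}, and the same two-model argument using $\Ratzd$ to force $2n = 3m$ and $\Mdd{2}$ to derive the contradiction. Your exposition of the reduction step is slightly more explicit than the paper's, but the content is identical.
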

\begin{proof}
In order to prove this theorem by contradiction, assume that $\eqnsdmd$ 
admits transformation into simple fractions for closed terms.
Then, by Proposition~\ref{prop-basic-terms-cr}, 
$\eqnsdmd \deriv 1 + 1 \mdiv \ul{2} = \ul{0}$ or there exist 
$n,m \in \Natpos$ such that 
$\eqnsdmd \deriv 1 + 1 \mdiv \ul{2} = \ul{n} \mdiv \ul{m}$ or
$\eqnsdmd \deriv 1 + 1 \mdiv \ul{2} = - (\ul{n} \mdiv \ul{m})$.
However, 
$\eqnsdmd \nderiv 1 + 1 \mdiv \ul{2} = \ul{0}$ 
and, for all $n,m \in \Natpos$,
$\eqnsdmd \nderiv 1 + 1 \mdiv \ul{2} =
 - (\ul{n} \mdiv \ul{m})$
because $\Ratzd \not\models 1 + 1 \mdiv \ul{2} = \ul{0}$ and, 
for all $n,m \in \Natpos$, 
$\Ratzd \not\models 1 + 1 \mdiv \ul{2} = - (\ul{n} \mdiv \ul{m})$.
Consequently, there exist $n,m \in \Natpos$ such that 
$\eqnsdmd \deriv 1 + 1 \mdiv \ul{2} = \ul{n} \mdiv \ul{m}$.
 
Let $n,m \in \Natpos$ be such that 
$\eqnsdmd \deriv 1 + 1 \mdiv \ul{2} = \ul{n} \mdiv \ul{m}$. 
Then 
$\Ratzd \models 1 + 1 \mdiv \ul{2} = \ul{n} \mdiv \ul{m}$.
Because also $\Ratzd \models 1 + 1 \mdiv \ul{2} = \ul{3} \mdiv \ul{2}$, 
we have $\Ratzd \models \ul{n} \mdiv \ul{m} = \ul{3} \mdiv \ul{2}$.
From this, it follows that 
$\Ratzd \models \ul{n} \mmul \ul{2} = \ul{m} \mmul \ul{3}$, and 
consequently, by Proposition~\ref{prop-numerals},
$\Ratzd \models \ul{n \mmul 2} = \ul{m \mmul 3}$.
Hence, $n \mmul 2 = m \mmul 3$.
From this and the fact that $2$ and $3$ are relatively prime, $n$ is a 
multiple of $3$ and $m$ is a multiple of $2$.

Let $m' \in \Int$ be such that $m = 2 \mmul m'$.
Then $\Mdd{2} \models \ul{m} = \ul{2 \mmul m'}$, and 
consequently, by Proposition~\ref{prop-numerals}, 
$\Mdd{2} \models \ul{m} = \ul{2} \mmul \ul{m'}$.
From this and the fact that $\Mdd{2} \models \ul{2} = 0$, it follows 
that $\Mdd{2} \models \ul{m} = 0$.
From this, it follows that $\Mdd{2} \models \ul{n} \mdiv \ul{m} = 0$.
From this and the fact that $\Mdd{2} \models 1 + 1 \mdiv \ul{2} = 1$, 
it follows that 
$\Mdd{2} \not\models 1 + 1 \mdiv \ul{2} = \ul{n} \mdiv \ul{m}$.
This contradicts 
$\eqnsdmd \deriv 1 + 1 \mdiv \ul{2} = \ul{n} \mdiv \ul{m}$.
Hence, $\eqnsdmd$ does not admit transformation into simple fractions 
for closed terms.
\qed
\end{proof}

The next result and the result following the second next result tell us 
that $\Ratzd$ is the only minimal model of $\eqnsdmd$ with an infinite 
carrier that admits transformation into simple fractions for closed 
fractions.
The second next result is an auxiliary result used to establish the
result following it.
\begin{theorem}
\label{theorem-closed-Q0}
$\Ratzd$ admits transformation into simple fractions for closed terms.
\end{theorem}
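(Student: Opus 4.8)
The plan is to show that every closed term over $\sigdmd$ is $\Ratzd$-equivalent to a simple closed fraction, namely one of the form $\ul{n} \mdiv \ul{m}$ or $-(\ul{n} \mdiv \ul{m})$ with $n,m \in \Natpos$, together with the degenerate case $\ul{0}$ (which is $\ul{0} \mdiv \ul{1}$). First I would invoke Theorem~\ref{theorem-basic-terms-dmd}: every closed term $p$ over $\sigdmd$ satisfies $\eqnsdmd \deriv p = q$ for some basic term $q \in \cB$, hence $\Ratzd \models p = q$. So it suffices to treat basic terms, which are built from $\ul{0}$, terms $\ul{n} \mdiv \ul{m}$, terms $-(\ul{n} \mdiv \ul{m})$ (with $n,m \in \Natpos$), and sums thereof.

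The core of the argument is then a routine induction on the structure of a basic term, showing that each basic term is $\Ratzd$-equivalent to a closed fraction of the stated canonical shape. The base cases are immediate. For a sum $q_1 + q_2$, by the induction hypothesis each $q_i$ is $\Ratzd$-equivalent to some $\pm(\ul{a_i} \mdiv \ul{b_i})$; I would then use the identities for adding fractions, which are available via Proposition~\ref{prop-dMd-derivable} (the fourth and fifth equations, $1 \mdiv (x \mmul y) = (1 \mdiv x) \mmul (1 \mdiv y)$ and $(x \mdiv y) \mmul (z \mdiv w) = (x \mmul z) \mdiv (y \mmul w)$), together with Proposition~\ref{prop-numerals} to collapse products and sums of numerals $\ul{a} \mmul \ul{b} = \ul{a \mmul b}$, $\ul{a} + \ul{b} = \ul{a + b}$. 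The one point requiring care: after forming $(\ul{a_1} \mmul \ul{b_2} \pm \ul{a_2} \mmul \ul{b_1}) \mdiv (\ul{b_1} \mmul \ul{b_2})$, the numerator $\ul{c}$ may be $0$ or negative; in $\Ratzd$ the denominator $\ul{b_1 \mmul b_2}$ is a nonzero numeral, so I can rewrite $\ul{c} \mdiv \ul{d}$ as $\ul{0}$ when $c = 0$, as $\ul{c} \mdiv \ul{d}$ when $c > 0$, and as $-(\ul{-c} \mdiv \ul{d})$ when $c < 0$ — all of which are legal simple closed fractions. Note every term produced has no proper subterm that is itself a fraction, so it is genuinely simple.

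The main obstacle is essentially bookkeeping rather than a deep obstruction: one must verify that the numeral arithmetic carried out symbolically matches the actual rational-number computation, i.e.\ that the closed equations used at each step are valid in $\Ratzd$. This is guaranteed by the fact that $\Ratzd$ is the zero-totalized field of rationals (end of Section~\ref{sect-Md}), so all of $\eqnscr$, Proposition~\ref{prop-dMd-derivable}, Proposition~\ref{prop-numerals}, and $\ul{n} \mdiv \ul{n} = 1$ for $n \in \Natpos$ hold there. The only genuinely non-trivial choice is the routing through $\pm(\ul{n} \mdiv \ul{m})$ rather than a single signed fraction, needed because $\sigdmd$ has a unary minus but the ``numerator'' of a simple fraction must still be a polynomial (a $\mdiv$-free term), which $-(\ul{n} \mdiv \ul{m})$ respects only because the minus sits outside the whole fraction — so I would be slightly careful to push the sign to the outermost position and keep it there.
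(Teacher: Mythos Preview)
Your approach mirrors the paper's: reduce to basic terms via Theorem~\ref{theorem-basic-terms-dmd} and then induct on their structure, handling sums by the common-denominator identity (which the paper justifies via $\Ratzd \models \ul{n} \mdiv \ul{n} = 1$ for $n \in \Natpos$).

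There is one definitional slip, however. A term of the form $-(\ul{n} \mdiv \ul{m})$ is \emph{not} a fraction: by the paper's definition a fraction must have $\mdiv$ as its outermost operator, whereas here the outermost operator is the unary minus. Your final paragraph has this exactly backwards --- pushing the sign to the outermost position is precisely what destroys the fraction property. The fix is to push the sign \emph{inside}, onto the numerator: the term $(-\ul{n}) \mdiv \ul{m}$ has $\mdiv$ outermost and $-\ul{n}$ is $\mdiv$-free (hence a polynomial in the paper's sense), so it is a genuine simple closed fraction, and $\Ratzd \models -(\ul{n} \mdiv \ul{m}) = (-\ul{n}) \mdiv \ul{m}$ follows from Proposition~\ref{prop-dMd-derivable}. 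With that one-line correction your argument goes through.
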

\begin{proof}
Because of Theorem~\ref{theorem-basic-terms-dmd}, it suffices to prove 
that for all $p \in \cB$, there exists a simple closed fraction $q$ such 
that $\Ratzd \models p = q$.
The proof is straightforward by induction on the structure of $p$.
If $p$ is of the form $\ul{0}$, $\ul{n} \mdiv \ul{m}$ or 
$- (\ul{n} \mdiv \ul{m})$, with $n,m \in \Natpos$, then it is trivial to 
show that there exists a simple closed fraction $q$ such that 
$\Ratzd \models p = q$.
If $p$ is of the form $p' + q'$, then it follows immediately from the
induction hypothesis and the claim that, for all 
$n,m,n',m' \in \Natpos$,  
$\Ratzd \models \ul{n} \mdiv \ul{m} + \ul{n'} \mdiv \ul{m'} = 
                (\ul{n} \mmul \ul{m'} + \ul{n'} \mmul \ul{m}) \mdiv
                (\ul{m} \mmul \ul{m'})$.
This claim is easily proved using the fact that, for all 
$n \in \Natpos$, $\Ratzd \models \ul{n} \mdiv \ul{n} = 1$.
\qed
\end{proof}

\begin{lemma}
\label{lemma-consistency}
Let $E \supseteq \eqnsdmd$ be a set of equations over the signature 
$\sigdmd$ and $p$ be a closed term over the signature $\sigdmd$.
Then $E \nderiv p = 0$ only if there exists a divisive
meadow $\cM$ such that $\cM \models E \union \set{p \mdiv p = 1}$ and
$\cM \models 0 \neq 1$.
\end{lemma}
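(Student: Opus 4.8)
The plan is to prove the contrapositive together with a direct construction: assuming $E \nderiv p = 0$, I will build a divisive meadow $\cM$ in which $E$ holds, $0 \neq 1$, and moreover $p \mdiv p = 1$. The natural candidate is an initial (or term) model of a suitably enlarged theory. First I would form the set of equations $E' = E \union \set{p \mdiv p = 1}$ over $\sigdmd$. If $E'$ is consistent with the separation axiom, i.e. $E' \nderiv 0 = 1$, then the initial algebra $\cM$ of $E'$ is a divisive meadow (since $E' \supseteq \eqnsdmd$) satisfying $\cM \models E' \union \set{0 \neq 1}$, and we are done. So the whole argument reduces to showing: if $E \nderiv p = 0$, then $E \union \set{p \mdiv p = 1} \nderiv 0 = 1$.

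To establish that implication I would argue by contraposition once more: suppose $E \union \set{p \mdiv p = 1} \deriv 0 = 1$. I want to conclude $E \deriv p = 0$. The key algebraic fact to exploit is that in any divisive meadow $\eqnsdmd \deriv p \mmul (p \mdiv p) = p$ (this is the divisive-meadow counterpart of the axiom $x \mmul (x \mmul x\minv) = x$, obtained via $x \mdiv y = x \mmul (1 \mdiv y)$ and $(x \mmul x) \mdiv x = x$; alternatively use the definitional equivalence with $\eqnsimd$, Theorem~\ref{theorem-defeqv-iMd-dMd}). Hence $\eqnsdmd$ proves $p = p \mmul (p \mdiv p)$, and combining this with the hypothetical derivation of $0 = 1$ from $E \union \set{p \mdiv p = 1}$, a standard deduction-theorem / proof-rewriting manipulation should let me eliminate the extra hypothesis $p \mdiv p = 1$. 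Concretely, from $E \union \set{p \mdiv p = 1} \deriv 0 = 1$ one gets, by multiplying through by $p$ and reasoning in equational logic, that $E \deriv p \mmul 0 = p \mmul 1$ under the substitution that is ``guarded'' by $p \mdiv p$; since $\eqnsdmd \deriv p \mmul 0 = 0$ and $\eqnsdmd \deriv p \mmul (p \mdiv p) = p$, this collapses to $E \deriv p = 0$, which is the desired contradiction.

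The step I expect to be the main obstacle is making the ``eliminate the hypothesis $p \mdiv p = 1$'' manipulation fully rigorous at the level of equational derivations rather than hand-waving with the deduction theorem (which is more delicate for equational logic than for first-order logic, because one cannot in general discharge a hypothesis as an implication). The clean way around this is to observe that for closed $p$ the term $p \mdiv p$ is, by Theorem~\ref{theorem-basic-terms-dmd} and Proposition~\ref{prop-basic-terms-cr}, provably equal to a basic term, and in fact $\eqnsdmd$ decides whether $p \mdiv p = 0$ or $p \mdiv p = 1$ according to whether $\eqnsdmd \deriv p = 0$; so if $E \nderiv p = 0$ one still need not have $E \deriv p \mdiv p = 1$, but one can pass to the subalgebra/quotient where the idempotent $e = p \mdiv p$ acts as a local unit and verify the separation axiom there. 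I would therefore hedge by presenting the initial-model construction for $E' = E \union \set{p \mdiv p = 1}$ and then reducing ``$E' \nderiv 0 = 1$'' to ``$E \nderiv p = 0$'' via the identity $\eqnsdmd \deriv p \mmul (p \mdiv p) = p$ and the ring identity $p \mmul 0 = 0$, spelling out the short equational chain explicitly so that no appeal to a deduction theorem is needed.
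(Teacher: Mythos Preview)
Your overall reduction is sound: it does suffice to show that $E \union \set{p \mdiv p = 1} \nderiv 0 = 1$, since then the initial algebra of that theory is a non-trivial divisive meadow with the required properties. The gap lies in how you justify the implication $E \union \set{p \mdiv p = 1} \deriv 0 = 1 \Rightarrow E \deriv p = 0$. ``Multiplying through by $p$'' does not eliminate the hypothesis $p \mdiv p = 1$ from an equational derivation, and your fallback claim that $\eqnsdmd$ decides whether $p \mdiv p$ equals $0$ or $1$ is simply false: take $p = \ul{2}$, so that $\Mdd{2} \models p \mdiv p = 0$ while $\Ratzd \models p \mdiv p = 1$, and hence $\eqnsdmd$ proves neither. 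The idempotent-quotient idea you gesture at can in fact be rescued, but not as you wrote it: one should multiply by $e = p \mdiv p$ rather than by $p$, and then invoke the identity $\eqnsdmd \deriv (r \mdiv r) \mmul C[s] = (r \mdiv r) \mmul C[(r \mdiv r) \mmul s]$ (used in the proof of Theorem~\ref{theorem-homomorphic-image}) to convert every use of the extra hypothesis into a step derivable from $\eqnsdmd$ alone; this yields $E \deriv p \mdiv p = 0$ and hence $E \deriv p = p \mmul (p \mdiv p) = 0$. None of this is in your write-up, and the argument you actually sketch does not go through.

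The paper's proof avoids all of this syntactic manipulation by taking a different, model-theoretic route. Arguing the contrapositive, it assumes that every model of $E \union \set{p \mdiv p = 1}$ is trivial, picks an arbitrary divisive \emph{cancellation} meadow $\cM'$ satisfying $E$, and observes that $\cM' \models p \neq 0$ would force $\cM' \models p \mdiv p = 1$ (this is exactly where cancellation is used), hence $\cM' \models 0 = 1$, hence $\cM' \models p = 0$ anyway. Thus $p = 0$ holds in every cancellation meadow satisfying $E$, and Theorems~\ref{theorem-Md-F0} and~\ref{theorem-defeqv-iMd-dMd} (the equational theory of meadows coincides with that of zero-totalized fields) then yield $E \deriv p = 0$. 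The decisive ingredient you are missing is precisely this appeal to cancellation meadows together with Theorem~\ref{theorem-Md-F0}; it replaces the syntactic elimination of the hypothesis that you were unable to carry out.
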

\begin{proof}
In order to prove this theorem by contraposition, assume that there does 
not exist a divisive meadow $\cM$ such that 
$\cM \models E \union  \set{p \mdiv p = 1}$ 
and $\cM \models 0 \neq 1$.
In other words, assume that for each divisive meadow $\cM$, 
$\cM \models E \union \set{p \mdiv p = 1}$ implies 
$\cM \models 0 = 1$.

Let $\cM'$ be a divisive cancellation meadow such that $\cM' \models E$.
Then $\cM' \models p \mdiv p = 1$ implies $\cM' \models 0 = 1$.
Assume that $\cM' \models p \neq 0$.
Then $\cM' \models p \mdiv p = 1$ and, because 
$\cM' \models p \mdiv p = 1$ implies $\cM' \models 0 = 1$,
also $\cM' \models 0 = 1$.
However, if $\cM' \models 0 = 1$, then $\cM' \models p = 0$.
This contradicts the assumption that $\cM' \models p \neq 0$.
Hence, $\cM' \models p = 0$.
From this, it follows that $E \deriv p = 0$ by 
Theorems~\ref{theorem-Md-F0} and~\ref{theorem-defeqv-iMd-dMd}.
\qed
\end{proof}

\begin{theorem}
\label{theorem-closed-infinite-minimal}
An infinite minimal divisive meadow admits transformation into simple 
fractions for closed terms only if it is isomorphic to $\Ratzd$.
\end{theorem}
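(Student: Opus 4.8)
The plan is to exploit the hypothesis to pin down every element of the meadow as the interpretation of a closed term of the very restricted shape $\ul{a} \mdiv \ul{b}$ with $a,b \in \Int$, and then to show that the canonical surjection onto $\Ratzd$ provided by Theorem~\ref{theorem-homomorphic-image} is injective on such elements, hence an isomorphism.

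First I would set up the surjection. Let $\cM$ be an infinite minimal divisive meadow that admits transformation into simple fractions for closed terms. By Theorem~\ref{theorem-homomorphic-image} there is a surjective $\sigdmd$-homomorphism $h \colon \cM \to \Ratzd$; in particular its $\sigcr$-reduct is a ring homomorphism, and it maps the interpretation in $\cM$ of each closed term over $\sigdmd$ to the interpretation of the same term in $\Ratzd$. Since $\cM$ is minimal, every element of the carrier of $\cM$ is the interpretation of some closed term over $\sigdmd$.

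Next I would use the hypothesis. Let $v$ be an element of the carrier of $\cM$, say $v = p^{\cM}$ with $p$ a closed term over $\sigdmd$. By hypothesis there is a simple closed fraction $q$ with $\cM \models p = q$, and $q$ is of the form $s \mdiv t$ with $s$ and $t$ closed terms over $\sigcr$. By Proposition~\ref{prop-basic-terms-cr}, each of $s$ and $t$ is derivably, hence in $\cM$, equal to a numeral $\ul{a}$ with $a \in \Int$ (writing $\ul{0}$ for $0$ and $\ul{-n}$ for $-\ul{n}$). Thus there are $a,b \in \Int$ with $\cM \models p = \ul{a} \mdiv \ul{b}$, that is, $v = \ul{a}^{\cM} \mdiv^{\cM} \ul{b}^{\cM}$. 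I would then check injectivity of $h$: since the $\sigcr$-reduct of $h$ is a ring homomorphism, it suffices to show $h^{-1}(0) = \set{0}$. Suppose $h(v) = 0$ and write $v = \ul{a}^{\cM} \mdiv^{\cM} \ul{b}^{\cM}$ as above. Then $0 = h(v) = \ul{a}^{\Ratzd} \mdiv^{\Ratzd} \ul{b}^{\Ratzd}$, and in $\Ratzd$ this value is $0$ only if $a = 0$ or $b = 0$. In either case $\ul{a}^{\cM} \mdiv^{\cM} \ul{b}^{\cM} = 0$ in $\cM$: if $a = 0$ because $\eqnsdmd \deriv 0 \mdiv y = 0$, and if $b = 0$ because $\eqnsdmd \deriv x \mdiv 0 = 0$ (Proposition~\ref{prop-dMd-derivable}). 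Hence $v = 0$, so $h$ is injective as well as surjective, whence $h$ is an isomorphism and $\cM$ is isomorphic to $\Ratzd$.

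I expect the only delicate point to be the correct combination of the hypothesis with minimality in the middle step, in particular the observation that a \emph{closed} simple fraction, after reducing numerator and denominator by Proposition~\ref{prop-basic-terms-cr}, is necessarily of the form $\ul{a} \mdiv \ul{b}$ with integer $a,b$; everything else is bookkeeping about the action of $h$ on such terms and about the zero-totalized division of $\Ratzd$. Note that both minimality and infiniteness are genuinely used, the latter through Theorem~\ref{theorem-homomorphic-image}: by Theorem~\ref{theorem-finite} a finite minimal divisive meadow also admits transformation into simple fractions without being isomorphic to $\Ratzd$.
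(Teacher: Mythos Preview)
Your proof is correct, and it is genuinely different from, and considerably more direct than, the argument in the paper.

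The paper proceeds by contradiction: assuming $\cM \not\cong \Ratzd$, it finds an $l$ with $\cM \not\models \ul{l}\mdiv\ul{l}=1$, then invokes Lemma~\ref{lemma-consistency} to produce a non-trivial divisive meadow validating $(1-\ul{l}\mdiv\ul{l})\mdiv(1-\ul{l}\mdiv\ul{l})=1$ together with all closed simple-fraction identities of $\cM$; the minimal submeadow of that meadow must be finite (via Theorem~\ref{theorem-homomorphic-image}), hence some $\Mdd{k}$, and a contradiction is extracted from the simple-fraction form of $1 + 1\mdiv\ul{k}$ much as in the proof of Theorem~\ref{theorem-closed-Md}. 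By contrast, you use Theorem~\ref{theorem-homomorphic-image} once to get the surjection $h\colon\cM\to\Ratzd$, combine minimality with the hypothesis and Proposition~\ref{prop-basic-terms-cr} to write every element of $\cM$ as $(\ul{a}\mdiv\ul{b})^{\cM}$ with $a,b\in\Int$, and then kill the kernel of $h$ by the trivial observation that $a\mdiv b=0$ in $\Ratzd$ forces $a=0$ or $b=0$, either of which makes $\ul{a}\mdiv\ul{b}$ vanish in every divisive meadow. Your route bypasses Lemma~\ref{lemma-consistency} and the detour through finite minimal meadows entirely; the paper's route, while heavier, reuses machinery (the $\Mdd{k}$ analysis and the $1+1\mdiv\ul{k}$ trick) that it has already developed for Theorem~\ref{theorem-closed-Md}, which may explain the authors' choice.
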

\begin{proof}
Let $\cM$ be an infinite minimal divisive meadow that admits 
transformation into simple fractions for closed terms.
By Theorem~\ref{theorem-homomorphic-image}, $\Ratzd$ is a 
homomorphic image of $\cM$.
Let $E_\cM$ be the set of all equations of the form
$u' \mdiv v' + u'' \mdiv v'' = u \mdiv v$,
with $u$, $v$, $u'$, $v'$, $u''$, and $v''$ of the form $\ul{n}$ or
$- \ul{n}$ with $n \in \Natpos$, that are satisfied by $\cM$.

In order to prove by contradiction that $\cM$ is isomorphic to $\Ratzd$, 
assume that $\cM$ is not isomorphic to $\Ratzd$.
Then there exists a $l \in \Natpos$ such that 
$\cM \not\models  \ul{l} \mdiv \ul{l} = 1$
because $\Ratzd$ is the initial algebra of 
$\eqnsdmd \union \set{\ul{l} \mdiv \ul{l} = 1 \where l \in \Natpos}$.
Consequently, there exists a $l \in \Natpos$ such that
$\eqnsdmd \union E_\cM \nderiv \ul{l} \mdiv \ul{l} = 1$.
Let $l \in \Natpos$ be such that 
$\eqnsdmd \union E_\cM \nderiv \ul{l} \mdiv \ul{l} = 1$.
By Lemma~\ref{lemma-consistency}, 
there exists a divisive meadow $\cM'$ such that
$\cM' \models
 \eqnsdmd \union E_\cM \union 
 \set{(1 - \ul{l} \mdiv \ul{l}) \mdiv (1 - \ul{l} \mdiv \ul{l}) = 1}$
and $\cM' \models 0 \neq 1$.

Let $\cM'$ be a divisive meadow such that
$\cM' \models 
 \eqnsdmd \union E_\cM \union
 \set{(1 - \ul{l} \mdiv \ul{l}) \mdiv \linebreak[2]
      (1 - \ul{l} \mdiv \ul{l}) = 1}$
and  $\cM' \models 0 \neq 1$, and 
let $\cM''$ be the minimal divisive submeadow of $\cM'$.
Then, for each set of equations $E$ over $\sigdmd$, $\cM' \models E$ 
implies $\cM'' \models E$. 
In particular, $\cM'' \models E_\cM$. 
$\cM''$ does not have $\Ratzd$ as a homomorphic image because 
$\Ratzd \models 
 (1 - \ul{l} \mdiv \ul{l}) \mdiv (1 - \ul{l} \mdiv \ul{l}) = 0$.
From this and Theorem~\ref{theorem-homomorphic-image}, it follows that
$\cM''$ is a finite minimal divisive meadow.
We know from Lemmas~4.1 and~4.8 in~\cite{BHT09a} and 
Theorem~\ref{theorem-defeqv-iMd-dMd} in this paper that each finite 
minimal divisive meadow is isomorphic to a divisive meadow $\Mdd{k}$ 
for some $k \in \Natpos$ that is square-free.
So we may assume that $\cM'' = \Mdd{k}$ for some $k \in \Natpos$.

Let $k \in \Natpos$ be such that $\cM'' = \Mdd{k}$.
Then there exist $n,m \in \Int$ such that 
$1 + 1 \mdiv \ul{k} = \ul{n} \mdiv \ul{m} \in E_\cM$. 
Let $n,m \in \Int$ be such that 
$1 + 1 \mdiv \ul{k} = \ul{n} \mdiv \ul{m} \in E_\cM$. 
Then $\Ratzd \models 1 + 1 \mdiv \ul{k} = \ul{n} \mdiv \ul{m}$ because 
$\Ratzd$ is a homomorphic image of $\cM$.
From this, it follows that 
$\Ratzd \models (\ul{k} + 1) \mdiv \ul{k} = \ul{n} \mdiv \ul{m}$, and
consequently, by Proposition~\ref{prop-numerals},
$\Ratzd \models \ul{k + 1} \mdiv \ul{k} = \ul{n} \mdiv \ul{m}$.
From this, it follows that 
$\Ratzd \models \ul{n} \mmul \ul{k} = \ul{m} \mmul \ul{k + 1}$
and consequently, by Proposition~\ref{prop-numerals},
$\Ratzd \models \ul{n \mmul k} = \ul{m \mmul (k + 1)}$.
Hence, $n \mmul k = m \mmul (k + 1)$.
From this and the fact that $k$ and $k + 1$ are relatively prime, it
follows that $n$ is a multiple of $k + 1$ and $m$ is a multiple of $k$.
Let $m' \in \Int$ be such that $m = k \mmul m'$.
Then $\Mdd{k} \models \ul{m} = \ul{k \mmul m'}$, and consequently, by 
Proposition~\ref{prop-numerals}, 
$\Mdd{k} \models \ul{m} = \ul{k} \mmul \ul{m'}$.
From this and the fact that $\Mdd{k} \models \ul{k} = 0$, it follows 
that $\Mdd{k} \models \ul{m} = 0$.
From this, it follows that $\Mdd{k} \models \ul{n} \mdiv \ul{m} = 0$.
From this and the fact that $\Mdd{k} \models 1 + 1 \mdiv \ul{k} = 1$, it 
follows that 
$\Mdd{k} \not\models 1 + 1 \mdiv \ul{k} = \ul{n} \mdiv \ul{m}$.
Because $\Mdd{k} = \cM''$ and 
$1 + 1 \mdiv \ul{k} = \ul{n} \mdiv \ul{m} \in E_\cM$,
this contradicts $\cM'' \models E_\cM$.
Hence, $\cM$ is isomorphic to $\Ratzd$.
\qed
\end{proof}

The next theorem tells us that admitting transformation into simple 
fractions for closed terms is a property of divisive meadows that cannot 
be expressed as a first-order theory.
\begin{theorem}
\label{theorem-closed-elementary}
Admitting transformation into simple fractions for closed terms is not 
an elementary property of divisive meadows.
\end{theorem}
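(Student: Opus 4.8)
The plan is to argue by contradiction, following the pattern of the proof of Theorem~\ref{theorem-elementary} but with one essential refinement. So assume there is a first-order theory $T$ over $\sigdmd$ such that the models of $\eqnsdmd \union T$ are exactly the divisive meadows that admit transformation into simple fractions for closed terms. The naive strategy---force a model of characteristic~$0$ by compactness and then appeal to a ``closed'' analogue of Corollary~\ref{corollary-char-zero}---does not work, because Theorem~\ref{theorem-closed-Q0} tells us that $\Ratzd$ has characteristic~$0$ and \emph{does} admit transformation into simple fractions for closed terms, and the same holds for every divisive meadow whose minimal submeadow is isomorphic to $\Ratzd$. Hence, besides characteristic~$0$, we must force the minimal submeadow of the compactness model to be non-isomorphic to $\Ratzd$.

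I would first record two easy facts. (a)~Whether a divisive meadow $\cM$ admits transformation into simple fractions for closed terms depends only on its minimal submeadow $\cM_0$: every closed term is interpreted in $\cM_0$, and $\cM_0$ embeds into $\cM$, so for each closed term $p$ and each simple closed fraction $q$ we have $\cM \models p = q$ iff $\cM_0 \models p = q$. (b)~Every finite divisive meadow admits transformation into simple fractions for closed terms: by Theorem~\ref{theorem-finite} it admits transformation into simple fractions in general, and substituting $\ul{0}$ for the variables of the simple fraction so obtained turns it into a simple closed fraction. I would also note that, for each prime $p > 2$ (so that $2 \mmul p$ is square-free), the finite divisive meadow $\Mdd{2 \mmul p}$ satisfies $\ul{2} \mdiv \ul{2} \neq 1$: since $\Mdd{2}$ satisfies $\eqnsdmd \union \set{\ul{2 \mmul p} = 0}$, initiality of $\Mdd{2 \mmul p}$ gives a homomorphism from $\Mdd{2 \mmul p}$ to $\Mdd{2}$, and it maps $\ul{2} \mdiv \ul{2}$ to $\ul{2} \mdiv \ul{2} = 0 \mdiv 0 = 0$ and $1$ to $1 \neq 0$ in $\Mdd{2}$.

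Now consider the set of sentences
\[
\eqnsdmd \union T \union \set{\ul{k} \neq 0 \where k \in \Natpos} \union \set{\ul{2} \mdiv \ul{2} \neq 1}\;.
\]
Any finite subset mentions only finitely many of the sentences $\ul{k} \neq 0$, say for $k \leq k'$; taking a prime $p > \max(k',2)$, the finite divisive meadow $\Mdd{2 \mmul p}$ satisfies $\eqnsdmd$, it satisfies $T$ by fact~(b) and the assumed axiomatization, it satisfies $\ul{k} \neq 0$ for all $k$ with $1 \leq k \leq k'$ because $2 \mmul p > k'$, and it satisfies $\ul{2} \mdiv \ul{2} \neq 1$ by the observation above; so the finite subset has a model. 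By the compactness of first-order logic there is a divisive meadow $\cM'$ that satisfies the whole set. Then $\cM'$ has characteristic~$0$, so its minimal submeadow $\cM'_0$ is infinite, and $\cM'_0 \not\models \ul{2} \mdiv \ul{2} = 1$ by fact~(a), so $\cM'_0$ is not isomorphic to $\Ratzd$. By Theorem~\ref{theorem-closed-infinite-minimal}, $\cM'_0$ does not admit transformation into simple fractions for closed terms, whence neither does $\cM'$ by fact~(a). But $\cM' \models \eqnsdmd \union T$, so $\cM'$ does admit it---a contradiction. Therefore no such $T$ exists.

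The step I expect to be the crux is the choice of the extra sentence $\ul{2} \mdiv \ul{2} \neq 1$. It must be false in $\Ratzd$, realized in arbitrarily large finite divisive meadows, and---crucially---it must constrain the \emph{minimal} submeadow rather than the meadow as a whole; a sentence like $\exists x\,(x \neq 0 \And x \mdiv x \neq 1)$ would fail the last requirement, being satisfied for instance by $\Ratzd \times \Ratzd$, whose minimal submeadow is again $\Ratzd$. Once the sentence is fixed, the argument goes through by replacing, in the proof scheme of Theorem~\ref{theorem-elementary}, the appeal to Corollary~\ref{corollary-char-zero} with the sharper classification of infinite minimal divisive meadows provided by Theorem~\ref{theorem-closed-infinite-minimal}.
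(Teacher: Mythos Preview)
Your proof is correct and follows the same overall route as the paper: use finite meadows $\Mdd{k}$ to witness finite satisfiability, apply compactness to obtain a meadow of characteristic~$0$ still satisfying $T$, and then invoke Theorem~\ref{theorem-closed-infinite-minimal} for the contradiction. The one substantive difference is the auxiliary sentence adjoined to the $\ul{k} \neq 0$: the paper uses the existential sentence $\mathit{NC} = \Exists{x}{(x \neq 0 \And x \mdiv x \neq 1)}$, whereas you use the closed sentence $\ul{2} \mdiv \ul{2} \neq 1$.

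Notably, the sentence you single out for criticism in your final paragraph is precisely the one the paper employs. Your objection is on point: from $\cM' \models \mathit{NC}$ one cannot conclude that the minimal submeadow $\cM'_0$ is non-isomorphic to $\Ratzd$, since the witness for $\mathit{NC}$ need not lie in $\cM'_0$; your example $\Ratzd \times \Ratzd$ (which admits transformation into simple fractions for closed terms, has characteristic~$0$, and satisfies $\mathit{NC}$) illustrates this. The paper, after obtaining $\cM'$, appeals to Theorem~\ref{theorem-closed-infinite-minimal} on the grounds that $\cM'$ is infinite and not a cancellation meadow, but that theorem is about \emph{minimal} meadows, and $\mathit{NC}$ does not descend to the minimal submeadow. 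Your closed inequation does descend, because closed terms are interpreted inside $\cM'_0$. In effect, your argument tightens the paper's proof at exactly the step you flag as the crux.
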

\begin{proof}
In order to prove this theorem by contradiction, assume that there 
exists a first-order theory $T$ over $\sigdmd$ such that the models of 
$\eqnsdmd \union T$ are precisely the divisive meadows with the 
mentioned property.
In this proof, we write $\mathit{NC}$ for 
$\Exists{x}{(x \neq 0 \And x \mdiv x \neq 1)}$.

Let $T$ be a first-order theory over $\sigdmd$ such that the models of 
$\eqnsdmd \union T$ are precisely the divisive meadows with the 
mentioned property.
Because of Theorem~\ref{theorem-finite}, for each $k' \in \Nat$, there 
exists a square-free $k'' \in \Nat$ with $k''$ not prime and $k'' > k'$ 
such that
$\Mdd{k''} \models
 \eqnsdmd \union T \union \set{\mathit{NC}} \union 
 \set{\ul{k} \neq 0 \where k \in \Natpos, k \leq k'}$.
From this, it follows that, for 
each $k' \in \Nat$, there exists a divisive meadow $\cM$ such that
$\cM \models
 \eqnsdmd \union T \union \set{\mathit{NC}} \union
 \set{\ul{k} \neq 0 \where k \in \Natpos, k \leq k'}$.
Hence, by the compactness of first-order logic, there exists a divisive
meadow $\cM'$ such that
$\cM' \models
 \eqnsdmd \union T \union \set{\mathit{NC}} \union
 \set{\ul{k} \neq 0 \where k \in \Natpos}$.
In other words, there exists a divisive meadow $\cM$ of characteristic 
$0$ such that $\cM \models \eqnsdmd \union T \union \set{\mathit{NC}}$.
This contradicts Theorem~\ref{theorem-closed-infinite-minimal} because 
a divisive meadow of characteristic $0$ is infinite and a divisive 
meadow that satisfies $\mathit{NC}$ is not a divisive cancellation 
meadow. 
Hence, there does not exist a first-order theory $T$ over $\sigdmd$ such 
that the models of $\eqnsdmd \union T$ are precisely the divisive 
meadows with the mentioned property.
\qed
\end{proof}

\section{Miscellaneous Results about Divisive Meadows}
\label{sect-miscellaneous}

In this section, we establish two results that are related to the 
results in preceding sections, but do not concern fractions.

In the proof of Theorem~\ref{theorem-closed-infinite-minimal}, one of 
the main results of this paper, Theorem~\ref{theorem-homomorphic-image}
plays an important part.
The first result, which is a generalization of the result that a 
polynomial in canonical form is derivably equal to $0$ only if each of 
its coefficients is derivably equal to $0$, is established by means of  
Theorem~\ref{theorem-homomorphic-image} as well.
\begin{theorem}
\label{theorem-polynomials}
Let $f(x)$ be a term over $\sigdmd$ of the form 
$a_n \mmul x^n + \ldots + a_1 \mmul x + a_0$, where $a_i$ is a closed 
term over $\sigdmd$ for each $i \in \set{0,\ldots,n}$.
Then $\eqnsdmd \deriv f(x) = 0$ only if, for each 
$i \in \set{0,\ldots,n}$, $\eqnsdmd \deriv a_i = 0$.
\end{theorem}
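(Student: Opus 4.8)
The plan is to prove the contrapositive in a roundabout way is awkward; instead I would argue directly using a semantic criterion for derivability. The key observation is that, by $\omega$-completeness-type reasoning available here, $\eqnsdmd \deriv f(x) = 0$ is equivalent to $\cM \models f(x) = 0$ for every divisive meadow $\cM$, and in particular for every \emph{minimal} divisive meadow, since minimal meadows generate (up to homomorphic image) all the behaviour of closed terms that matters. So suppose $\eqnsdmd \deriv f(x) = 0$. Then $\cM \models f(x) = 0$ for every minimal divisive meadow $\cM$, i.e.\ the polynomial function induced by $f$ over $\cM$ is identically zero on the carrier of $\cM$.

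The heart of the argument is to exploit $\Ratzd$. By Theorem~\ref{theorem-homomorphic-image}, $\Ratzd$ is a homomorphic image of each infinite minimal divisive meadow, and the minimal divisive submeadow of $\Ratzd$ itself is $\Ratzd$; in any case $\eqnsdmd \deriv f(x) = 0$ forces $\Ratzd \models f(x) = 0$. Now write each closed coefficient $a_i$ in a normal form: by Proposition~\ref{prop-basic-terms-cr} each $a_i$ is $\eqnsdmd$-derivably equal to $\ul{0}$, some $\ul{m_i}$, or some $- \ul{m_i}$ — wait, that only applies if $a_i$ is over $\sigcr$, which it need not be. The $a_i$ are arbitrary closed terms over $\sigdmd$, so instead I use Theorem~\ref{theorem-basic-terms-dmd} to replace each $a_i$ by a basic term $b_i \in \cB$ with $\eqnsdmd \deriv a_i = b_i$; then it suffices to show each $b_i$ is $\eqnsdmd$-derivably zero (and hence so is $a_i$). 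Since $\Ratzd \models \sum_i b_i \mmul x^i = 0$ and the $b_i$ are closed, each $b_i$ denotes a fixed rational number $r_i$, and a polynomial over $\Ratzd$ whose induced function vanishes identically on the infinite carrier $\Ratzd$ must have all coefficients zero (a nonzero rational polynomial has only finitely many roots). Hence $\Ratzd \models b_i = 0$ for each $i$, so each $r_i = 0$.

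The remaining step is to lift ``$\Ratzd \models b_i = 0$'' to ``$\eqnsdmd \deriv b_i = 0$''. This is where the restriction to closed terms and the structure of $\cB$ pays off: a basic term that is $0$ in $\Ratzd$ must be $0$ in \emph{every} divisive meadow, because the only divisive-meadow behaviour a basic term can exhibit is governed by its rational value together with the characteristic, and — crucially — a basic term $b \in \cB$ evaluating to $0$ in $\Ratzd$ is built from numerals and $\mdiv$ in such a way that $\eqnsdmd \deriv b = 0$ follows by the same induction used to prove Theorem~\ref{theorem-homomorphic-image} (where exactly the implication ``$\cM \models p = 0$ for a basic term $p$, together with $\Ratzd \models p = 0$'' was analysed). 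Concretely, if $\Ratzd \not\models b = 0$ then by that theorem's argument $\eqnsdmd \union \set{b = 0}$ forces a nonzero square-free characteristic, contradicting nothing by itself — so I instead argue: since $\Ratzd \models b = 0$ and $b \in \cB$, clearing the (numeral) denominators via $\ul{n} \mdiv \ul{n} \cdot C[\,\cdot\,] = \ul{n}\mdiv\ul{n}\cdot C[\ul{n}\mdiv\ul{n}\cdot(\,\cdot\,)]$ (the identity used in Theorem~\ref{theorem-homomorphic-image}) reduces $b = 0$ to an equation of the form $\ul{p}\mdiv\ul{q} = 0$ with $\ul{p} = 0$ provable in $\eqnscr$, whence $\eqnsdmd \deriv b = 0$. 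Thus $\eqnsdmd \deriv b_i = 0$, hence $\eqnsdmd \deriv a_i = 0$, for each $i$.

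The main obstacle is precisely this last lifting step — turning the semantic fact $\Ratzd \models b_i = 0$ into the syntactic fact $\eqnsdmd \deriv b_i = 0$ for basic terms. The clean way to organize it is to prove, as an auxiliary claim by induction on the structure of basic terms (mirroring the induction in the proof of Theorem~\ref{theorem-homomorphic-image}), that for every $b \in \cB$, if $\Ratzd \models b = 0$ then $\eqnsdmd \deriv b = 0$; the rest of the proof is then routine manipulation of numerals via Proposition~\ref{prop-numerals}. I would isolate that claim first and then assemble the theorem from it together with Theorem~\ref{theorem-basic-terms-dmd} and the elementary fact about roots of rational polynomials.
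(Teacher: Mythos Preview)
Your approach has a genuine gap at the lifting step. The auxiliary claim you isolate --- that for every basic term $b \in \cB$, $\Ratzd \models b = 0$ implies $\eqnsdmd \deriv b = 0$ --- is simply false. Take
\[
b \;=\; \ul{1} \mdiv \ul{2} \,+\, \ul{1} \mdiv \ul{2} \,+\, (-(\ul{1} \mdiv \ul{1})).
\]
This is a basic term with $\Ratzd \models b = 0$. But in $\Mdd{2}$ we have $\ul{2} = 0$, hence $\ul{1} \mdiv \ul{2} = 0$, hence $b$ evaluates to $0 + 0 - 1 = 1 \neq 0$; so $\eqnsdmd \nderiv b = 0$. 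More abstractly, the implication you want says that the canonical homomorphism from the initial model of $\eqnsdmd$ to $\Ratzd$ is injective, and it is not (the initial meadow is not a zero-totalized field, as remarked after Theorem~\ref{theorem-Md-F0}). Your ``clearing denominators'' manoeuvre via the identity $(\ul{n} \mdiv \ul{n}) \mmul C[s] = (\ul{n} \mdiv \ul{n}) \mmul C[(\ul{n} \mdiv \ul{n}) \mmul s]$ used in Theorem~\ref{theorem-homomorphic-image} only yields $(\ul{n} \mdiv \ul{n}) \mmul b = 0$ derivably, which is strictly weaker than $b = 0$ since $\ul{n} \mdiv \ul{n}$ is not derivably~$1$ in $\eqnsdmd$.

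The paper's proof therefore has to exploit the hypothesis $\eqnsdmd \deriv f(x) = 0$ in meadows \emph{other than} $\Ratzd$. After obtaining $\Ratzd \models a_i = 0$ for all $i$ (your first step, which is correct), it assumes for contradiction that some $a_m$, with $m$ maximal, is not derivably zero. By Lemma~\ref{lemma-consistency} there is then a nontrivial meadow satisfying $a_m \mdiv a_m = 1$; its minimal submeadow cannot have $\Ratzd$ as a homomorphic image (since $\Ratzd \models a_m = 0$), so by Theorem~\ref{theorem-homomorphic-image} it is some finite $\Mdd{k}$. Dividing $f(x)$ through by $a_m$ there produces a monic identity $x^m + \cdots = 0$ valid in $\Mdd{k'}$ for a prime factor $k'$ of $k$, and this is contradicted by passing to an algebraic extension of $\Mdd{k'}$ containing a root of $x^m + \cdots + 1$. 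That detour through Lemma~\ref{lemma-consistency}, finite meadows, and an algebraic extension is doing essential work that a $\Ratzd$-only argument cannot replace.
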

\begin{proof}
Because $\eqnsdmd \deriv f(x) = 0$, we have $\Ratzd \models f(x) = 0$.
From this and the fact that there exists an $i \in \set{0,\ldots,n}$ 
such that $\Ratzd \not\models a_i = 0$ only if 
$\Ratzd \not\models f(x) = 0$, it follows that $\Ratzd \models a_i = 0$ 
for each $i \in \set{0,\ldots,n}$.

In order to prove this theorem by contradiction, assume that there 
exists an $i \in \set{0,\ldots,n}$ such that 
$\eqnsdmd \nderiv a_i = 0$.
Substitution of $x$ by $0$ in $f(x) = 0$ yields 
$\eqnsdmd \deriv a_0 = 0$. 
Therefore, there exists an $i \in \set{1,\ldots,n}$ such that 
$\eqnsdmd \nderiv a_i = 0$.
Let $m \in \set{1,\ldots,n}$ be the maximal $i \in \set{1,\ldots,n}$ 
such that $\eqnsdmd \nderiv a_i = 0$.
Then $\eqnsdmd \deriv a_m \mmul x^m + \ldots + a_1 \mmul x = 0$.

By Lemma~\ref{lemma-consistency}, we know that there exists a divisive 
meadow $\cM$ such that 
$\cM \models \eqnsdmd \union \set{a_m \mdiv a_m = 1}$ 
and $\cM \models 0 \neq 1$.
Let $\cM$ be a divisive meadow such that
$\cM \models \eqnsdmd \union \set{a_m \mdiv a_m = 1}$
and $\cM \models 0 \neq 1$, and 
let $\cM'$ be the minimal divisive submeadow of $\cM$.
Then, for each equation $\phi$ over $\sigdmd$, $\cM \models \phi$ 
implies $\cM' \models \phi$. 
In particular, $\cM' \models a_m \mdiv a_m = 1$.

$\Ratzd$ is not a homomorphic image of $\cM'$ because otherwise 
$\Ratzd \models a_m \mdiv a_m = 1$ and consequently
$\Ratzd \models a_m \neq 0$.
From this and Theorem~\ref{theorem-homomorphic-image}, it follows that
$\cM'$ is a finite minimal divisive meadow.
We know from Lemmas~4.1 and~4.8 in~\cite{BHT09a} and 
Theorem~\ref{theorem-defeqv-iMd-dMd} in this paper that each finite 
minimal divisive meadow is isomorphic to a divisive meadow $\Mdd{k}$ for 
some $k \in \Natpos$ that is square-free.
So we may assume that $\cM' = \Mdd{k}$ for some $k \in \Natpos$ that is
square-free.
Let $k \in \Natpos$ be such that $\cM' = \Mdd{k}$.
Then 
$\eqnsdmd \union \set{\ul{k} = 0} \deriv
 a_m \mdiv a_m = 1$ 
because $\Mdd{k}$ is the initial algebra of 
$\eqnsdmd \union \set{\ul{k} = 0}$.
From this, it follows that 
$\eqnsdmd \union \set{\ul{k} = 0} \deriv 
 x^m + \ldots + (a_1 \mdiv a_m) \mmul x = 0$.
Now, let $k' \in \Natpos$ be a prime factor of $k$.
Then, 
because $\eqnsdmd \union \set{\ul{k'} = 0} \deriv \ul{k} = 0$, also 
$\eqnsdmd \union \set{\ul{k'} = 0} \deriv 
 x^m + \ldots + (a_1 \mdiv a_m) \mmul x = 0$.

From this, it follows that
$\Mdd{k'} \models x^m + \ldots + (a_1 \mdiv a_m) \mmul x = 0$, and
con\-se\-quent\-ly,
$\Mdd{k'} \models x^m + \ldots + (a_1 \mdiv a_m) \mmul x + 1 = 1$.
Because $\Mdd{k'} \not\models 1 = 0$, there exists an algebraic 
extension $\cM''$ of $\Mdd{k'}$ and an element $v$ of the carrier of 
$\cM''$ such that $\cM''$ satisfies 
$x^m + \ldots + (a_1 \mdiv a_m) \mmul x + 1 = 0$ 
if the value assigned to $x$ is $v$.
So, there exists a divisive meadow $\cM''$ such that 
$\cM'' \models \ul{k'} = 0$ 
and an element $v$ of the carrier of $\cM''$ such that $\cM''$ does not 
satisfy $x^m + \ldots + (a_1 \mdiv a_m) \mmul x = 0$ if the value 
assigned to $x$ is $v$.
This contradicts 
$\eqnsdmd \union \set{\ul{k'} = 0} \deriv 
 x^m + \ldots + (a_1 \mdiv a_m) \mmul x = 0$.
Hence, for each $i \in \set{0,\ldots,n}$, $\eqnsdmd \deriv a_i = 0$.
\qed
\end{proof}

A polynomial in canonical form is derivably equal to $0$ only if each of 
its closed substitution instances is derivably equal to $0$.
The question arises whether ``only if'' can be replaced by ``if and only 
if''.
The proof of the next theorem gives a negative answer this question.
\begin{theorem}
\label{theorem-omega-incomplete}
$\eqnsdmd$ is not $\omega$-complete.
\end{theorem}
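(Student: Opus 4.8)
The plan is to produce an equation over $\sigdmd$ whose every closed substitution instance is derivable from $\eqnsdmd$ but which is itself not derivable from $\eqnsdmd$ (which is exactly what the failure of $\omega$-completeness amounts to). I would use the equation $(\ul{2} \mdiv \ul{2} - 1) \mmul (x \mmul x - x) = 0$. Its left-hand side is derivably equal, using only $\eqnscr$, to $(\ul{2} \mdiv \ul{2} - 1) \mmul x^2 + (1 - \ul{2} \mdiv \ul{2}) \mmul x + 0$, i.e.\ to a term of the form $a_2 \mmul x^2 + a_1 \mmul x + a_0$ in which each $a_i$ is a closed term over $\sigdmd$ and $a_2 = \ul{2} \mdiv \ul{2} - 1$.

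For the derivability of the closed instances I would proceed semantically. By Theorems~\ref{theorem-Md-F0} and~\ref{theorem-defeqv-iMd-dMd} it is enough to show, for each closed term $p$ over $\sigdmd$, that $(\ul{2} \mdiv \ul{2} - 1) \mmul (p \mmul p - p) = 0$ holds in every zero-totalized field with the multiplicative inverse operation replaced by a division operation. Let $F$ be such a divisive meadow. If the characteristic of $F$ is not $2$, then $\ul{2}$ is interpreted in $F$ as a nonzero, hence invertible, element, so $F \models \ul{2} \mdiv \ul{2} = 1$ and the first factor is interpreted as $0$. If the characteristic of $F$ is $2$, then the interpretation of the closed term $p$ in $F$ lies in the minimal submeadow of $F$, which is isomorphic to $\Mdd{2}$ (the two-element zero-totalized prime field with division); since $\Mdd{2} \models x \mmul x = x$, the second factor $p \mmul p - p$ is interpreted as $0$. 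Either way the equation holds in $F$, so $\eqnsdmd \deriv (\ul{2} \mdiv \ul{2} - 1) \mmul (p \mmul p - p) = 0$.

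For the non-derivability of the equation I would apply Theorem~\ref{theorem-polynomials}: if $\eqnsdmd \deriv a_2 \mmul x^2 + a_1 \mmul x + a_0 = 0$, then $\eqnsdmd \deriv a_2 = 0$, that is, $\eqnsdmd \deriv \ul{2} \mdiv \ul{2} = 1$. But this last equation is not derivable, since $\Mdd{2} \models \ul{2} = 0$ forces $\Mdd{2} \models \ul{2} \mdiv \ul{2} = 0$ while $\Mdd{2} \models 0 \neq 1$ — the same kind of argument as in the proof of Theorem~\ref{theorem-closed-Md}. Hence $\eqnsdmd \nderiv (\ul{2} \mdiv \ul{2} - 1) \mmul (x \mmul x - x) = 0$, which together with the preceding paragraph shows that $\eqnsdmd$ is not $\omega$-complete, and also answers the question preceding the theorem in the negative.

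I expect the one genuinely non-routine point to be the contrast between the behaviour of the equation in general and the behaviour of its closed instances. The equation really does fail in some divisive meadow — for instance in the four-element zero-totalized field, at any element outside its two-element prime subfield, where $x \mmul x \neq x$ and $\ul{2} \mdiv \ul{2} - 1$ is interpreted as $-1$ — but no closed term can witness this failure, because a closed term always denotes an element of the minimal submeadow, and in characteristic $2$ that submeadow is $\Mdd{2}$, in which $x \mmul x = x$ holds. Pinning this down (closed terms reach only the minimal submeadow; $\Mdd{2}$ satisfies $x \mmul x = x$) is the heart of the proof; the rest is routine manipulation together with Theorems~\ref{theorem-Md-F0}, \ref{theorem-defeqv-iMd-dMd}, and~\ref{theorem-polynomials}.
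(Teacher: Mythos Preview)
Your proof is correct and uses the same witness equation and the same semantic argument for the closed instances (case split on characteristic~$2$ in zero-totalized fields, via Theorems~\ref{theorem-Md-F0} and~\ref{theorem-defeqv-iMd-dMd}) as the paper. The one point of difference is the non-derivability step: you invoke Theorem~\ref{theorem-polynomials} to reduce to $\eqnsdmd \nderiv \ul{2} \mdiv \ul{2} = 1$, which you then refute in $\Mdd{2}$; the paper instead argues directly by exhibiting a countermodel, namely an algebraic extension of $\Mdd{2}$ in which $x^2 + x + 1$ has a root (i.e.\ the four-element zero-totalized field you yourself mention in your final paragraph), where $x^2 = x$ fails while $1 - \ul{2} \mdiv \ul{2} = 1$. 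Your route reuses the machinery of Theorem~\ref{theorem-polynomials} cleanly; the paper's route is self-contained and independent of that theorem.
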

\begin{proof}
First we prove by contradiction that 
$\eqnsdmd \nderiv
 (1 - \ul{2} \mdiv \ul{2}) \mmul (x^2 - x) = 0$ 
and next we prove that
$\eqnsdmd \deriv (1 - \ul{2} \mdiv \ul{2}) \mmul (p^2 - p) = 0$
for each closed term $p$ over~$\sigdmd$.

Assume that 
$\eqnsdmd \deriv (1 - \ul{2} \mdiv \ul{2}) \mmul (x^2 - x) = 0$.
Then 
$\eqnsdmd \union \set{\ul{2} = 0} \deriv x^2 - x = 0$, 
and consequently $\cM \models x^2 = x$ for each divisive meadow $\cM$ of 
characteristic $2$.
Let $\cM'$ be a finite algebraic extension of $\Mdd{2}$ such that 
$\cM' \models x^2 + x + 1 = 0$. 
Then $\cM'$ is a divisive meadow of characteristic $2$ such that 
$\cM' \not\models x^2 = x$.
This is easy to see: otherwise $\cM' \models x + x + 1 = 0$ and 
consequently $\cM' \models 1 = 0$.
However, $\cM' \not\models x^2 = x$ contradicts $\cM \models x^2 = x$ 
for each divisive meadow $\cM$ of characteristic $2$.
Hence, $\eqnsdmd \nderiv
 (1 - \ul{2} \mdiv \ul{2}) \mmul (x^2 - x) = 0$.

Let $\cM$ be a divisive cancellation meadow.
If $\cM \models \ul{2} = 0$, then it is easily proved by structural 
induction that either $\cM \models p = 0$ or $\cM \models p = 1$, and 
consequently $\cM \models p^2 - p = 0$, for each closed term $p$ over 
$\sigdmd$.
If $\cM \models \ul{2} \neq 0$, then 
$\cM \models 1 - \ul{2} \mdiv \ul{2} = 0$.
Hence, $\cM \models (1 - \ul{2} \mdiv \ul{2}) \mmul (p^2 - p) = 0$
for each closed term $p$ over $\sigdmd$.
From this, it follows that 
$\eqnsdmd \deriv (1 - \ul{2} \mdiv \ul{2}) \mmul (p^2 - p) = 0$
for each closed term $p$ over $\sigdmd$
by Theorems~\ref{theorem-Md-F0} and~\ref{theorem-defeqv-iMd-dMd}.
\qed
\end{proof}
The $\omega$-completeness question can also be posed about extensions of 
$\eqnsdmd$ that exclude divisive meadows of non-zero characteristic, such 
as 
$\eqnsdmd \union \linebreak
 \set{(1 + x^2 + y^2) \mdiv (1 + x^2 + y^2) = 1}$
($\Ratzd$ is the initial algebra among the divisive meadows that satisfy 
this extension of $\eqnsdmd$). 
Such variants of the question are related to Hilbert's tenth problem and 
are harder to answer.

\section{Concluding Remarks}
\label{sect-concl}

We have shown that the setting of meadows allows workable syntactic 
definitions of a fraction and a simple fraction to be given.
This only means that we have a point of departure for the development of
a workable theory about fractions.
We have made a start with the development of such a theory, but there 
remain many open questions.
For instance, it is an open question, arising from 
Theorem~\ref{theorem-char-non-zero},
whether each divisive meadow of non-zero characteristic for which 
there exists an $n \in \Nat$ such that each element of its carrier is 
the root of a non-trivial polynomial of degree $n$ or less admits 
transformation into simple fractions --- we know already from 
Theorem~\ref{theorem-char-non-zero} that there exists such an $n$ for 
each divisive meadow of prime characteristic.
Another open question, arising from the proof of 
Corollary~\ref{corollary-bounded-sums-of-fractions-2}, is whether there 
exists a natural number $k$ such that each term in one variable over 
$\sigdmd$ is derivably equal to a sum of at most $k$ simple fractions.

There are questions that are not complicated for simple fractions, but
complicated for more complex terms.
Let $\Complzd$ be the zero-totalized field of complex numbers with the 
multiplicative inverse operation replaced by a division operation.
Using Robinson's classical result that the first order theory of an 
algebraically closed field is model complete~\cite{Rob56a}, it is proved 
in~\cite{BB15b} that the equational theory of $\Complzd$ and the 
equational theory of the class of all models of 
$\eqnsdmd \union \set{\ul{n} \mdiv \ul{n} = 1 \where n \in \Natpos}$ are
the same.
From this, it follows that $\Complzd \models p = 0$ if and only if 
$\eqnsdmd \union \set{\ul{n} \mdiv \ul{n} = 1 \where n \in \Natpos}
  \deriv p = 0$.
A simple direct proof with the theory developed so far can be found if 
$p$ is restricted to simple fractions.
However, it seems less straightforward to find such a proof if $p$ is 
restricted to sums of simple fractions.

\subsection*{Acknowledgements}
We thank an anonymous referee for carefully reading a preliminary 
version of this paper and for suggesting improvements of the 
presentation of the paper.

\bibliographystyle{splncs03}
\bibliography{MD}

\end{document}